\documentclass{amsart}
\usepackage{math}
\usepackage{lineno,tikz,mathabx,mathrsfs,mathtools,amsrefs,hyperref}
\usetikzlibrary{cd,quotes,angles,decorations,arrows,calc,automata}

\newenvironment{fsa}[1][auto]{\begin{tikzpicture}[->,>=stealth',
    shorten >=1pt,auto,node distance=3cm,double distance between line centers=0.45ex,
    initial text=,accepting/.style=accepting by arrow,
    every state/.style={inner sep=3pt,minimum size=0pt},
    every loop/.style={looseness=12},semithick,#1]}{\end{tikzpicture}}

\newtheorem{mainthm}{Theorem}

\newtheorem{maincor}[mainthm]{Corollary}

\def\mathvisiblespace{\text{\textvisiblespace}}

\begin{document}

\title{Automatic actions I. Bounded automata and orbits}
\author{Laurent Bartholdi}
\address{Institut Camille Jordan, Université Claude Bernard, Lyon}
\email{laurent.bartholdi@gmail.com}
\date{September 8th, 2025}
\thanks{The author gratefully acknowledges partial support from the SNSF grant TMAG-2\_216487/1}
\begin{abstract}
  We develop the theory of \emph{automatic actions}: (semi)groups acting by $\omega$-regular transformations on an $\omega$-regular language, showing that it covers a large class of heretofore-unrelated examples.

  We focus on the subclass of actions by \emph{bounded} $\omega$-regular transformations, those for which the B\"uchi automata encoding the action do not have two connected non-trivial cycles.

  We show that, for bounded actions of inverse semigroups, the orbit relation is also $\omega$-regular.

  We deduce a number of corollaries, in particular decidability, for such actions, of minimality, topological transitivity, aperiodicity, and order of elements. More generally, every first-order statement over the space of the action, involving the action of specific semigroup elements as well as the relation ``being in the same orbit'', is decidable.

  We also apply this result to the study of Julia sets of post-critically finite polynomials, and show that the encoding of Fatou components is also computable; thus every first-order statement involving intersection, disjointness etc. of Fatou components or their full orbit under the polynomial, is decidable.
\end{abstract}
\maketitle

\section{Introduction}
A variety of group actions $G\curvearrowright X$ share a common setup: an encoding of $X$ by an \emph{$\omega$-regular language $L$}, such that for all $g\in G$ the graph $R_g\coloneqq\{(x,x\cdot g)\}\mid x\in X\}$ of its action is encoded by an $\omega$-regular language $\subseteq L\times L$. Recall that an $\omega$-regular language $L$ is a collection of right-infinite words over a finite alphabet $\Sigma$, that it is recognized by a \emph{B\"uchi automaton}: a finite directed graph with edge labels in $\Sigma$ and some vertices designated as \emph{initial} or \emph{residual}. The language $L$ consists in all $\omega$-words that can be read along a right-infinite path starting at an initial vertex and passing infinitely often through a residual vertex. Likewise, $R_g$ is recognized by a B\"uchi automaton over the alphabet $\Sigma\times\Sigma$, sometimes called a \emph{B\"uchi transducer}. By an \emph{encoding} we mean a map $\pi\colon L\twoheadrightarrow X$ whose kernel $\{(x,y)\mid \pi(x)=\pi(y)\}$ is $\omega$-regular, again with alphabet $\Sigma\times\Sigma$. See~\S\ref{ss:automata} for more complete definitions and illustrations; suffice it here to say that all boolean and comparison operations on $\omega$-regular languages are effective.

Main examples of such \emph{automatic actions} are: automatic groups in the sense of Cannon-Epstein-Holt-Levy-Patterson-Thurston~\cite{epstein-:wp}, and in particular word-hyperbolic groups; automata groups in the sense of Glushkov~\cite{glushkov:ata}, and in particular the constructions by Aleshin and Grigorchuk, and Nekrashevych's iterated monodromy groups~\cite{nekrashevych:ssg}; and the $\Z$-action on substitutional subshifts.

In the language of logic, these are actions admitting an \emph{automatic presentation} in the sense of Khoussainov-Nerode~\cite{khoussainov-nerode:aps}, with $\omega$-regular languages as in Blumensath-Gr\"adel~\cite{blumensath-graedel:as} rather than the more common regular ones. Indeed, an automatic action is nothing but a presentation of the structure $\langle X,\{R_g\}_{g\in G}\rangle$.

By a fundamental result, due to Kuske-Lohrey~\cite{kuske-lohrey:counting} and B\'ar\'any-Kaiser-Rabinovich~\cite{barany-kaiser-rabinovich:cardinality}, the first-order theory of an automatically-presented structure $\mathscr X=\langle X,R_1,\dots\rangle$ is decidable. Namely, there is an algorithm that, given a first-order statement (a logical expression with quatification $\forall,\exists$ over elements of $X$, boolean connectives, and relations $R_i$), decides whether it holds or not in $\mathscr X$. Furthermore, these statements hold even if first-order theory is enriched with cardinality quantifiers $\exists^\kappa$ for $\kappa\in\N\cup\{\aleph_0,2^{\aleph_0}\}$ a cardinal (those are the only ones that may occur), and congruence quantifiers $\exists^{k\bmod n}$, requiring the existence of precisely that many solutions.

This article focusses on a subclass of automatic actions, the \emph{bounded actions}, see~\S\ref{ss:bounded}. Consider an automaton $\mathscr A_g$ recognizing the relation $R_g\subseteq L\times L$, and declare a vertex $v$ of $\mathscr A_g$ \emph{trivial} if all right-infinite paths starting at $v$ carry a label of the form $(w,w)$; equivalently, if making $v$ the initial state of $\mathscr A_g$ will make it recognize a subset of the diagonal in $L\times L$. Then $\mathscr A_g$ is called \emph{bounded} if there are only finitely many right-infinite paths in it that do not reach a trivial state, and $G\curvearrowright X$ is a bounded action if it admits an encoding such that all $R_g$ may be given by bounded automata.

Even though this condition may seem very restrictive, the class of bounded actions contains most known examples of automata groups, all iterated monodromy groups of post-critically finite polynomials (namely, polynomials $\in\C[z]$ all of whose critical points have finite orbits, see~\S\ref{ss:cx}), and substitutional subshifts, see~\S\ref{ss:morphic}.

A number of examples will be given in~\S\ref{ss:automata}. The prototype of a bounded, automatic $\Z$-action is the \emph{odometer}, with $X=\Z_2$ the $2$-adics, represented by their digit expansions as $L=\{0,1\}^\omega$, and the action of $\Z$ on $X$ by addition. The generator $+1$ acts by ``addition with carry'': while it sees a $1$, it prints a $0$ and carries to the next position, till it sees a $0$ and prints a $1$. There is therefore a unique infinite path in the automaton that does not reach a trivial state:
\[
  \begin{fsa}[baseline=0]
  \node[state,initial above,accepting below] (a) at (0,0) {$1$};
  \node[state,accepting below] (e) at (3,0) {$0$};
  \path (a) edge[loop left] node[left] {$(1,0)$} () edge node {$(0,1)$} (e)
  (e) edge[loop right] node[below=1mm] {$(1,1)$} () edge[loop above] node[right] {$(0,0)$} ();
\end{fsa}
\]
Substitutional subshifts are encoded by their desubstutition sequence, and the shift generator acts also as a kind of ``addition with carry'', but in a (typically) non-integer base. The ``$+1$'' automaton has an infinite path corresponding to an infinite sequence of carries, which occurs only when encoding the point at the boundary of arbitrarily large blocks of the substitution.

Furthermore, it costs nothing, and leads to many more applications, to consider actions by inverse semigroups (namely, actions by partial bijections) rather than by groups. The main result of this article is, in simplified form:
\begin{mainthm}[see Theorem~\ref{thm:orbits}]
  Let $G\curvearrowright X$ be a bounded automatic action by homeomorphisms on a compact $X$, encoded by a language $L$. Then the \emph{orbit relation} $\mathscr O_G=\{(x,x\cdot g)\mid x\in X,g\in G\}=\bigcup_{g\in G}R_G$ effectively admits an encoding by an $\omega$-regular language $\subseteq L\times L$.
\end{mainthm}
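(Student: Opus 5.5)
The plan is to write the orbit relation $\mathscr O_G=\bigcup_{g\in G}R_g$ as a union of words of \emph{bounded complexity}, and to show that finitely many combinatorial ``shapes'' account for all of them. Fix a finite generating set $S$ of $G$ (closed under inverses), with bounded automata $\mathscr A_s$ for $s\in S$. Let $\mathscr A_S$ be the disjoint union of these automata. For a word $g=s_1\cdots s_n$, the relation $R_g$ is recognized by the product (composition) automaton $\mathscr A_{s_1}\circ\cdots\circ\mathscr A_{s_n}$. Its states are tuples $(q_1,\dots,q_n)$ of states of $\mathscr A_S$.

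The key observation is what boundedness gives along a single input word $x$. Reading $x$, at most finitely many coordinates $q_i$ can remain outside the trivial states forever. After that, each non-trivial path is eventually one of the finitely many non-trivial cycles of $\mathscr A_S$. All other coordinates fall into trivial states and then act as the identity.

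So after a finite prefix, the behaviour of $(x,x\cdot g)$ is governed only by the \emph{sequence of non-trivial cycles} still active, read in the order $1,\dots,n$. Trivial coordinates between them can be deleted. This is the key reduction: I would show that, after a finite prefix, the pair $(x,x\cdot g)$ agrees with the action of a word in the finitely many ``cycle transducers''. These are the finitely many periodic non-trivial paths; for example, in the odometer there is only the carry cycle $(1,0)^\omega$.

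The steps, in order, are:
\begin{enumerate}
\item Decompose each $x\cdot g$ as $x=uy$, $x\cdot g=u'y'$ with $|u|=|u'|$. Here $y\mapsto y'$ is a composition of finitely many active tail cycles, and $(u,u')$ is the finite-prefix part.
\item Show that the set of pairs $(y,y')$ arising from such tails is $\omega$-regular. The active cycles compose to an eventually periodic transducer. Because each cycle has a periodic input/output and $X$ is compact with $G$ acting by homeomorphisms, long products of cycles should collapse: composing cycle transducers is a finite monoid action on the finite set of cycle states. Hence only finitely many tail relations occur, say $T_1,\dots,T_m$, each $\omega$-regular.
\item Show that the prefix relation $P_j=\{(u,u')\}$ compatible with ending in the configuration corresponding to $T_j$ is regular. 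It is the set of finite pairs read by the ``orbit of the state configuration''. This is a reachability relation in a finite graph whose vertices are the finitely many relevant configurations modulo deletion of trivial coordinates. Here one uses the inverse semigroup structure to make every partial bijection have its inverse in $S$, so that the relation is symmetric and composable.
\item Conclude that $\mathscr O_G=\bigcup_j P_j\cdot T_j$ is a finite union of concatenations of regular and $\omega$-regular languages, hence $\omega$-regular. Every construction above is from finite data, which gives effectiveness.
\end{enumerate}

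The main obstacle is step~3 together with the finiteness claim in step~2. Arbitrarily long group words $g$ produce tuples of arbitrary length, and I must prove that the reduced configurations (deleting trivial coordinates and merging adjacent coordinates) range over a finite set. The difficulty is that coordinates in a non-trivial but non-cyclic state may be unboundedly many at any given finite time, even though only finitely many survive forever.

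To handle this, I would first try to bound, for each prefix length, the \emph{nucleus-like} set of reduced configurations. The idea is to run a contraction argument: paths that leave the cycles must enter trivial states within bounded time (at most the number of states), since there are no two connected cycles. So at each position, only configurations with boundedly many consecutive non-trivial coordinates survive a bounded lookahead. Finally, I would quotient by the encoding kernel, which is $\omega$-regular by assumption, so that the result is a relation on $X$ rather than on $L$.
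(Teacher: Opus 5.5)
\textbf{There is a genuine gap: the regularity of your prefix relation.} Your reduction $\mathscr O_G=\bigcup_h P_h\cdot R_h$ is sound. Here $h$ ranges over finitely many tail elements and $P_h=\{(u,v):|u|=|v|,\ \exists g\in G,\ g@(u,v)=h\}$. But the regularity of $P_h$ is the entire content of the theorem, and the finite graph you propose for it does not exist. Configurations are not finitely many at a given depth, even after deleting trivial coordinates, and the surviving non-trivial coordinates need not be transient.

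Already the odometer shows this. Take $g=a^k$ and $x=1^m0\cdots$. After $t<m$ letters, coordinate $j+1$ is still on the carry cycle exactly when $2^t\mid j$. So the reduced configuration is $a^{\lceil k/2^t\rceil}$, which has unbounded length. In fact, for fixed $(u,v)$ the set $\{g@(u,v):g\in G\}$ is all of $\langle a\rangle\cong\Z$. Hence neither words nor group elements can serve as states. Your bounded-lookahead remedy does not help either, since these coordinates lie on the cycle rather than on transient paths.

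\textbf{The missing idea} is to let one state of the orbit automaton encode the whole \emph{set} $\{g@(u,v):g\in G\}$ at once. The paper does this with a finite graph $\Gamma_q$ whose input-to-output paths spell these sections. Essentially it is the level-$n$ Schreier graph $\Gamma_{q_0}\times\Sigma^n$, with each edge labeled by a section $s@(u',v')$.
\begin{itemize}
\item Boundedness bounds, uniformly in $n$, the number of edges carrying non-finitary labels.
\item The finitary states generate a finite inverse semigroup family $H$. So finitary paths can be replaced by single edges, and purely finitary vertices can be removed.
\item This bounds the size of $\Gamma_q$, so only finitely many nodes occur up to labeled isomorphism.
\end{itemize}

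\textbf{Step~2 also needs a different justification.} The semigroup generated by cycle states is in general infinite; the odometer generator is itself a cycle state. So there is no finite monoid action to appeal to, and compactness plays no role. What is true is that along any pair $(x,x\cdot g)$ the sections $g@(x_{<N},(x\cdot g)_{<N})$ eventually lie in a finite nucleus. This is the contraction theorem for bounded families, Proposition~\ref{prop:boundednuk} (after Bondarenko--Nekrashevych). It is proved by induction on the number of periodic factors, using the finiteness of $H$. In the paper this nucleus defines the residual nodes: those whose input and output are joined by a single nucleus edge. The added ``shortening'' edges guarantee that a long product of nucleus elements is eventually collapsed to a single one.
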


It follows that, for bounded actions, $\mathscr O_G$ can be considered as part of the structure, and therefore used in first-order statements without impairing their decidability. Thus
\begin{maincor}
  It is decidable, given a bounded automatic action for which the encoding $L\twoheadrightarrow X$ is continuous, whether it is
  \begin{itemize}
  \item minimal (every orbit is dense);
  \item topologically transitive (there is a dense orbit);
  \end{itemize}
\end{maincor}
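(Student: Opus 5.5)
The plan is to reduce both properties to first-order statements over the structure $\langle X,\mathscr O_G,\dots\rangle$ and then invoke the decidability result of Kuske--Lohrey and B\'ar\'any--Kaiser--Rabinovich. By the main theorem (Theorem~\ref{thm:orbits}), $\mathscr O_G$ is effectively encoded by an $\omega$-regular relation on $L\times L$. So it suffices to express density of orbits in a way that stays inside the automatic structure.

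Density is not first-order in $X$ alone, so the first step is to work at the level of $L$, where the topology is available. Since $\pi\colon L\twoheadrightarrow X$ is continuous and $L$ is compact (I will take $L$ closed in $\Sigma^\omega$, or replace it by its closure, arguing that $\pi$ extends), the sets $\pi(uL)$ for finite words $u$ are not open in general. They do, however, form a family of compact sets whose interiors I must control. A cleaner route is the prefix-closeness relation. For $x,y\in L$, the relation "the common prefix of $x$ and $y$ is at least as long as that of $x$ and $z$" is $\omega$-regular in $(x,y,z)$. Using it, "the orbit of $\pi(y)$ is dense" can be phrased as: for every $x\in L$ and every $z\in L$ with $z\neq x$, there exists $y'$ such that $(y,y')$ lies in the pullback of $\mathscr O_G$ to $L\times L$ and $y'$ agrees with $x$ on a longer prefix than $z$ does.

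The statement should then quantify over the whole preimage. Concretely, I would pull back $\mathscr O_G$ along $\pi$, which is $\omega$-regular since $\ker\pi$ is, and then show that density of the orbit of $\pi(y)$ in $X$ is equivalent to density of its full preimage orbit in $L$. This is the main obstacle. One direction uses continuity of $\pi$: a dense subset of $L$ maps to a dense subset of $X$. The converse fails in general, because a dense subset of $X$ need not have a dense preimage.

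My first attempt at the obstacle is to avoid the preimage and quantify over basic open sets of $X$ directly. For each $x\in L$ and each prefix length, the set $\pi(uL)$ contains a nonempty open subset of $X$ (by Baire, since finitely many such sets cover compact $X$). Density of $O\subseteq X$ is then equivalent to: for every $u$, $O$ meets $\pi(uL)$ whenever $\pi(uL)$ has nonempty interior. I would handle this interior condition via the complementary union of cylinders, which is expressible with the prefix relation and the kernel of $\pi$.

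Once density of the orbit of a given point is a first-order formula $\delta(y)$, the two properties are immediate. Minimality is $\forall y\,\delta(y)$ and topological transitivity is $\exists y\,\delta(y)$, and both are decided by the cited theorem.
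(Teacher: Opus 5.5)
There is a genuine gap, and it sits exactly where the topology of $X$, as opposed to that of $L$, has to enter.

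Your Baire step is misapplied. The finite closed cover $X=\bigcup_{|u|=n}\pi(uL)$ only forces \emph{some} $\pi(uL)$ of each length to have nonempty interior, not each one. The equivalence you then state (a set $O$ is dense iff it meets every $\pi(uL)$ with nonempty interior) is true. But it needs a local argument you do not give: Baire inside an arbitrary nonempty open $W$, plus uniform continuity of $\pi$ so that the chosen $\pi(uL)$ lies in $W$.

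More seriously, everything then rests on a first-order definition of ``$\pi(uL)$ has nonempty interior'', which you only gesture at. Suppose ``the complementary union of cylinders'' means testing $\pi(uL)\not\subseteq\bigcup_{v\neq u,\,|v|=|u|}\pi(vL)$. This fails. Take $L=\{a,b\}\{0,1\}^\omega$ with $\pi$ deleting the first letter. Every cylinder image is open, yet for $u\neq\varepsilon$ it equals the image of another cylinder of the same length. Your criterion then accepts every nonempty $O$.

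A correct version must compute closures of subsets of $X$ inside $L$. This uses that $\pi$ is a closed map, so $\overline{\pi(V)}=\pi(\overline V)$. Here $L$ is compact because it is closed by the standing assumption of \S\ref{ss:bounded}. So no extension of $\pi$ to $\overline L$ is needed, and continuity alone would not provide one. None of this appears in the proposal.

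The missing idea, which makes the Baire detour unnecessary, is to saturate the \emph{neighbourhoods} by $\ker\pi$ rather than the orbit. Let $N_n(x)$ be the set of $w\in L$ that agree on their first $n$ letters with some $x'\approx x$.

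\begin{itemize}
\item $\pi(N_n(x))$ is a neighbourhood of $\pi(x)$: its complement lies in the closed set $\bigcup\{\pi(uL)\mid |u|=n,\ \pi(x)\notin\pi(uL)\}$, which misses $\pi(x)$.
\item By uniform continuity, these sets form a neighbourhood basis at $\pi(x)$.
\end{itemize}

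This is the paper's saturated relation $({\approx}\times1)\cdot\mathscr U\cdot({\approx}\times1)$ from \S\ref{ss:top}. With it, $\pi(x)\in\overline{\pi(y)\cdot G}$ becomes $\forall n\,\exists z:(x,z)\in\mathcal U_n\wedge(z,y)\in\mathscr O_G$, which is the relation $\mathscr R_G$ of Theorem~\ref{thm:orbits}. Minimality is then $\forall x\,\forall y\,\mathscr R_G(x,y)$ and transitivity is $\exists y\,\forall x\,\mathscr R_G(x,y)$.

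Your first formula is repaired precisely by replacing ``$y'$ agrees with $x$'' with ``$y'$ agrees with some $x'\approx x$''. Write $|a\wedge b|$ for the length of the longest common prefix of $a$ and $b$. A four-variable variant of your prefix relation, comparing $|x'\wedge y'|$ with $|a\wedge b|$ for an auxiliary pair $a\neq b$, can then replace the paper's unary copy of $\N$.
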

\begin{proof}
  The topological closure operations are also effective (and are considered in more generality in the main text), so the relation $\mathscr R_G=\{(x,y)\mid y\in\overline{x\cdot G}\}$ also admits an $\omega$-regular encoding. Thus the first statement is expressed logically as
  \[\forall x:\forall y:(x,y)\in\mathscr R_G,\]
  the second as
  \[\exists x:\forall y:(x,y)\in\mathscr R_G.\qedhere\]
\end{proof}

The following generalizes a result of Bondarenko-W\"achter~\cite{bondarenko-wachter:orbits}:
\begin{maincor}[= Corollaries~\ref{cor:infiniteorbit} and~\ref{cor:infinite}]
  It is decidable, given a bounded automatic action $G\curvearrowright X$, whether
  \begin{itemize}
  \item the action is aperiodic (there is no finite orbit);
  \item $G$ has an infinite orbit;
  \item $G$ is infinite.
  \end{itemize}
\end{maincor}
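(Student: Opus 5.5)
The plan is to reduce all three items to first-order statements, enriched with cardinality quantifiers, about a structure that has an automatic presentation. Then the Kuske--Lohrey / B\'ar\'any--Kaiser--Rabinovich theorem decides them. The structure is $\langle L,\mathscr O_G,\ker\pi\rangle$. By the main theorem (Theorem~\ref{thm:orbits}) the orbit relation $\mathscr O_G\subseteq L\times L$ is effectively $\omega$-regular, and $\ker\pi$ is $\omega$-regular by definition of an encoding. Since points of $X$ correspond to $\ker\pi$-classes, I would first pass to the quotient. Either I quantify over representatives and use the $\omega$-regular set of minimal representatives, or I note that the B\'ar\'any--Kaiser--Rabinovich result handles cardinality quantifiers on quotients by $\omega$-regular equivalence relations, counting classes rather than words. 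I expect this counting-modulo-$\ker\pi$ point to be the main technical obstacle. An $\omega$-regular equivalence need not admit an $\omega$-regular transversal, so I would rely on the cited theorem's treatment of ``$\exists^{\kappa}$ classes''. If needed, I would fall back on the fact that a class is finite iff finitely many words map to it, because $\ker\pi$ classes are uniformly bounded for the encodings considered.

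\emph{Aperiodicity.} Write $y\sim x$ for $(x,y)\in\mathscr O_G$ composed with $\ker\pi$ on both sides; this composite is again $\omega$-regular by projection. The action is aperiodic iff
\[\forall x:\ \neg\,\exists^{<\aleph_0}_{/\ker\pi}\,y:\ y\sim x,\]
so it suffices to decide $\forall x\,\exists^{\ge\aleph_0}y$ in the quotient. \emph{Existence of an infinite orbit} is the same formula with $\exists x$ in place of $\forall x$.

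\emph{Infiniteness of $G$.} This does not follow from the orbit statements alone, since an infinite group can have all orbits finite (e.g.\ a non-finitely-generated torsion action). I would instead use the bounded structure directly. $G$ acts faithfully, or we replace $G$ by its image, which is what ``$G$ infinite'' should mean for an action. For bounded automata the finitely many non-trivial infinite paths, together with the finite state set, mean that $g$ is determined by a finite amount of data: its action on the finitely many ``singular'' rays and its finite portrait above trivial states. I would then argue that $G$ is infinite iff some orbit is infinite, or some point of the closure of the non-trivial rays has infinitely many distinct germs. Concretely, I would aim for the reduction ``$G$ finite iff all orbits are finite and uniformly bounded in size''. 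Uniform boundedness is expressible as $\exists^{\le N}$ for an $N$ computable from the automaton sizes, using that the bounded automata generating $G$ have a fixed number of states. Then, if all orbits are finite, a Burnside-type argument for bounded automata groups (contracting, so the nucleus is finite) shows $G$ embeds in a finite permutation group of a finite level, which is decidable by enumeration.

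I expect this last reduction, that finite orbits force finiteness of $G$ in the bounded setting, to need the contraction property and to be the real heart of the infinite-$G$ part.
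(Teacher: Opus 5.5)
Your treatment of the first two items is the paper's. Theorem~\ref{thm:orbits} makes $\mathscr O_G$ effectively $\omega$-regular, and the sentences $\forall x\,\exists^{\ge\omega}y\,\mathscr O_G(x,y)$ and $\exists x\,\exists^{\ge\omega}y\,\mathscr O_G(x,y)$ are then decided by the Kuske--Lohrey/B\'ar\'any--Kaiser--Rabinovich theorem. Saturating by $\ker\pi$ and counting classes is a harmless extra precaution, covered by the non-injective case of that theorem. But discard your fallback, which is false in general: in the example of~\S\ref{ss:nonminimal} the single point $\mathbf0^\Z$ is coded by the whole Cantor set $\{1,3\}^\omega$.

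The third item has a genuine gap. The missing ingredient is the theorem of D'Angeli--Francoeur--Rodaro--W\"achter, which the paper invokes, noting that its argument carries over to partial bijections of $L$: a semigroup generated by finitely many automata is infinite if and only if it has an infinite orbit. This reduces the third item to the second. It uses neither boundedness nor contraction; boundedness enters only through Theorem~\ref{thm:orbits}.

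Your reason for rejecting this reduction does not apply, because $G$ is generated by finitely many automata. The real obstruction for abstract actions is a finitely generated example such as $\Z$ acting on $\bigsqcup_n\Z/n$. Excluding such behaviour for automaton actions, via finite-state structure and compactness, is precisely the content of that theorem.

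What you propose instead leaves exactly the implication ``all orbits finite $\Rightarrow$ $G$ finite'' to an unspecified Burnside-type argument, and the surrounding steps do not hold up.
\begin{itemize}
\item A bound $N$ on the orbit sizes of all finite $G$ of a given input size certainly exists, since there are finitely many such inputs. But computing it is equivalent to the very decision problem you are solving, so ``$N$ computable from the automaton sizes'' is circular.
\item A partial bijection of a general $L$ need not act on prefixes of a fixed length. In the bounded Fibonacci shift automaton of~\S\ref{ss:fibo}, the first output letter depends on later input letters. So ``a finite permutation group of a finite level'' is not even defined.
\end{itemize}
Uniformly bounded orbits would give finiteness cheaply, since $G$ would embed in a power of a fixed finite symmetric inverse monoid and hence be locally finite. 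The nontrivial step is getting from merely finite orbits to finiteness of $G$, and that is exactly the theorem you need to import.
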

\begin{proof}
  The first statement is
  \[\forall x:\exists^{\ge\omega}y:(x,y)\in\mathscr O_G,\]
  and the second is
  \[\exists x:\exists^{\ge\omega}y:(x,y)\in\mathscr O_G.\]
  For the third one, D'Angeli-Francoeur-Rodaro-W\"achter prove in~\cite{dangeli-francoeur-rodaro-wachter:orbits} that an automata group is infinite if and only if it has an infinite orbit; their argument applies here.
\end{proof}
In particular, it is decidable given a bijection $g\colon X\righttoleftarrow$ given by a bounded automaton, whether $g$ has infinite order. This extends a result of Bondarenko-Sidki-Zapata for automata~\cite{bondarenko-b-s-z:conjugacy}.

Note that, if the bijection $g$ were merely given by an automaton, then the order of $g$ could not be determined in general; see~\cite{bartholdi:wordorder}. There are examples of automatic actions for which the orbit relation is not $\omega$-regular, see~\S\ref{ss:bernoulli}.

We finally consider some applications on iterated monodromy groups; see~\S\ref{ss:cx}. Nekrashevych develops a duality theory between expanding dynamical systems and certain group actions, which yield an encoding of the Julia set of a rational map $f$ by \emph{left-}infinite words, under an $\omega$-regular equivalence relation. Orbits of certain subgroups are related to certain dynamically-significant subsets of the Julia set; in particular, the subgroup generated by a loop around a Fatou component is related to that Fatou component. We deduce:
\begin{cor}[= Corollary~\ref{cor:fatou}]
  Let $f$ be a post-critically finite polynomial. Then for every periodic critical point $z$, the relation ``$x,y$ are on the boundary of a preimage of the Fatou component of $z$'' is effectively $\omega$-regular. For every Fatou component $F$, the relation ``$x,y$ are on the boundary of $F$'' is also effectively $\omega$-regular.
\end{cor}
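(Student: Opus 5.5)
The plan is to deduce the statement from the main orbit theorem (Theorem~\ref{thm:orbits}), applied to suitable subgroups or subsemigroups of the iterated monodromy group $G=\mathrm{IMG}(f)$, and then to transport the result through Nekrashevych's encoding of the Julia set. Recall the encoding: points of $J_f$ are left-infinite words $\cdots x_2x_1$ over the alphabet $X$ of a kneading automaton, modulo the asymptotic equivalence relation, which is $\omega$-regular. After reversing words to make them right-infinite, we obtain an $\omega$-regular language $L$ with an $\omega$-regular kernel $\pi\colon L\twoheadrightarrow J_f$. For post-critically finite polynomials the generators of $G$ are bounded automata (this is recalled in~\S\ref{ss:cx}), so every finitely generated subsemigroup of $G$, together with its restrictions to cylinders, acts boundedly.

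First treat a periodic critical point $z$ of period $p$, with immediate Fatou component $F_z$. The first return map $f^{p}$ fixes $F_z$ and acts on $\partial F_z$ as $w\mapsto w^{d}$ on the circle. The corresponding loop in $\mathrm{IMG}$ is a product $h$ of generators (a "rotation around $F_z$"); it acts as an odometer-like bounded automaton on the subtree of words describing $\partial F_z$. I would identify the set $\partial F_z$ with the $\pi$-image of the closure of a single $\langle h\rangle$-orbit of the encoded point corresponding to the landing point of the internal ray of angle $0$. More intrinsically, $\partial F_z\times\partial F_z$ would be the image of $\overline{\mathscr O_{\langle h\rangle}}$.

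By Theorem~\ref{thm:orbits}, $\mathscr O_{\langle h\rangle}$ is $\omega$-regular. The closure is effective, as noted in the proof of the first corollary. Composing with the kernel of $\pi$ (existential quantification over $L$ preserves $\omega$-regularity) gives an $\omega$-regular relation "$x,y\in\partial F_z$".

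Preimages of $F_z$ are handled by letting the whole inverse semigroup generated by $h$ and the partial shift maps $x\mapsto vx$, for $v\in X^*$, act. Conjugating $h$ by prefixes gives the loops around the components of $f^{-n}(F_z)$. The boundary of each such component is the set of $vy$ with $y\in\partial F_z$, suitably corrected by the encoding. Therefore the relation "$x,y$ lie on the boundary of a common preimage" is the $\pi$-image of the orbit closure of a bounded inverse semigroup action, namely the prefix-conjugates of $h$ act as bounded partial bijections. A second application of Theorem~\ref{thm:orbits}, in its inverse semigroup version, makes this relation $\omega$-regular.

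An arbitrary Fatou component $F$ of a polynomial is eventually mapped onto a periodic one. By Sullivan and post-critical finiteness, a periodic component contains a periodic critical point, so $F$ is a specific preimage $f^{-n}$ of some $F_z$, determined by a finite word $v$. Then "$x,y\in\partial F$" is obtained from the relation for $F_z$ by prepending $v$, which is a regular operation.

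Effectiveness comes from the facts that the kneading automaton is computable from $f$ and that every step above is constructive. The main obstacle I expect is the identification of $\partial F_z$ with $\pi$ of the orbit closure. One must check that the $\langle h\rangle$-orbit closure in the word space maps onto exactly the boundary, and not onto extra points glued in by the asymptotic equivalence. One must also check that points of $\partial F_z$ having several encodings are all captured. I would handle this by closing the relation under the kernel of $\pi$ on both sides, and by using the fact that $\partial F_z$ is a Jordan curve parametrized by internal angles in base $d$. That parametrization makes the $\langle h\rangle$-orbit of the angle-$0$ ray dense in the circle.
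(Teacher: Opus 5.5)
\textbf{Your proposal applies the orbit relation at the wrong end of the words.} $\mathscr O_{\langle h\rangle}$ is a relation on the boundary of the tree, and there $h$ reads a word starting from the root. In Nekrashevych's coding, asymptotic equivalence is $h_n(x_n\cdots x_0)=y_n\cdots y_0$, so group elements read a Julia code starting from its \emph{finest} letter. After your reversal, however, the first letter of a word of $L$ is the \emph{coarsest} one, the letter deleted by $f$. Pushing $\mathscr O_{\langle h\rangle}$ forward by $\pi\times\pi$ therefore lets $h$ act on the coarse end. This produces pairs of distinct $f^n$-preimages of a common point, not pairs on a common Fatou boundary.

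Here is a concrete failure. For the Basilica with $H=\langle a\rangle$, one has $a(\ell_*u)=\ell_{-*}u$ for every $u$, so $(\ell_*u,\ell_{-*}u)\in\mathscr O_H$. Under your $\pi$, these are preimages of the arbitrary Julia point $\pi(u)$ lying in the two level-one tiles, i.e.\ generically a pair $p,-p$. Since closures of distinct bounded Fatou components meet in at most one point, $p$ and $-p$ lie on a common bounded Fatou boundary only if $p\in\partial F_0$. Separately, $\overline{\mathscr O_{\langle h\rangle}}$ contains the diagonal of $L$, so your identity $\partial F_z\times\partial F_z=(\pi\times\pi)(\overline{\mathscr O_{\langle h\rangle}})$ cannot hold.

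The missing idea is the paper's reversal: take the closure of the automaton for $\mathscr O_H$ and reverse its arrows. The reversed automaton $\overleftarrow{\mathscr O_H}$ accepts exactly the pairs such that, for every $n$, some $h_n\in H$ (depending on $n$) maps $x_n\cdots x_0$ to $y_n\cdots y_0$. This is an equivalence relation. For $H=\langle g_z\rangle$, after passing to $f^m$ and the change of spider, it is precisely ``on the boundary of a common preimage of the Fatou component of $z$''. Fixing the common coarse prefix, i.e.\ the address of the component, then gives the relation for a specific $F$.

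The preimage step has further problems. First, the partial maps $x\mapsto vx$ are not bounded. No state of their automaton is trivial, so, like the one-sided shift of~\S\ref{ss:bernoulli}, they have uncountably many singular paths. Second, the bounded conjugates $vx\mapsto v\,h(x)$ form an infinite family, so Theorem~\ref{thm:orbits} does not apply to the semigroup you describe. Even granting regularity, its orbit relation again contains bad pairs $(vx,v\,h^kx)$ with $x\notin\{\ell'_1,\dots,\ell'_d\}^\omega$. Third, when $f^n$ has degree $>1$ on a component $U$ of $f^{-n}(F_z)$, $\partial U$ is split among several branches $v$, and these must be glued together. In the paper, that gluing is exactly what the reversed orbit relation of the lifted loops performs.

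What does survive is your description of the \emph{set} $\partial F_z$. After passing to $f^m$ and the paper's change of spider (which you tacitly assume when you say $h$ is an odometer on a subtree), one has $\partial F_z=\pi(\{\ell'_1,\dots,\ell'_d\}^\omega)$. But this needs no orbit theorem, and it does not yield the relation for preimages.
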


It follows that every first-order statement over the Julia set of $f$, involving relations of the type ``$x$ is on the boundary of a certain Fatou component'', or ``$x,y$ are on the boundary of a preimage of a certain Fatou component'', is decidable.

\subsection{Reddite ergo quae sunt Caesaris, Caesari}
The article~\cite{bondarenko-wachter:orbits} was a great source of inspiration for this article. Their proof makes implicit use of an $\omega$-regular language of orbits. They consider the restrictive setting of self-similar groups on trees; while this article drops the requirements of self-similary, actions on trees, actions on full shifts, and even group actions.

Indeed the main text is written in the language of inverse semigroups, which are the natural setting for action on languages $\neq\Sigma^\omega$.

The encoding of substitutional subshifts is essentially covered by the theory of Bratteli-Vershik diagrams; a novelty of the present approach is that it covers all expanding substitutions, and not only the primitive ones. This opens the door to studying, via B\"uchi automata, fundamental questions such as conjugacy of subshifts; this has, for now, only been explored in the setting of primitive substitutions, or equivalently minimal subshifts.

We note finally that the automatic structures we consider only encode the \emph{action}, namely the set $X$ on which $G$ acts, along with relations on $X$ given by elements of $G$. There is no encoding of the group $G$ itself, so that quantifications ``$\exists g:\cdots$'' are not at our disposal. It is possible, in some favourable cases of substitutional subshifts, to also allow quantifications over the group $G=\Z$ and nevertheless have a decidable first-order theory; this will be the topic of a further article~\cite{bartholdi-m:actions2}.

I am deeply grateful to Ruiwen Dong and Ivan Mitrofanov for valuable discussions and feedback on this text.

\section{Automata and transducers}\label{ss:automata}
We start by recalling basic notions on $\omega$-automata. A \emph{B\"uchi automaton} on an alphabet $\Sigma$ is a finite directed graph $\mathscr A$, with edge labels in $\Sigma$, and whose vertices $s\in S$ are called \emph{states}; a set of \emph{initial} vertices $I\subseteq S$; and a \emph{residual} subset of vertices $F\subseteq S$. The accepted \emph{language} $L(\mathscr A)\subset \Sigma^\omega$ consists of those labels of right-infinite paths in $\mathscr A$ that start at an initial vertex and visit infinitely often a residual state. In formulas,
\[L(\mathscr A) = \Big\{x\in\Sigma^\omega\mid \exists\text{path }\pi\colon\omega\to S\text{ labeled }x\text{ with }\pi(0)\in I,\#\pi^{-1}(F)=\infty\Big\}.\]
A language $L\subseteq\Sigma^\omega$ is \emph{$\omega$-regular} if is it the language of a B\"uchi automaton.

Note that $L$ has the topology of a subspace of the Cantor set $\Sigma^\omega$, so it is totally disconnected. It need \emph{not} be a closed subspace --- if $L$ is closed, then it can be expressed as $L(\mathscr A)$ for an automaton in which all states are residual. Such automata are called \emph{safety automata}. These are closely related to classical finite state automata: they admit a unique minimal representation, and their language is the closure of a language of finite words.

\begin{thm}[B\"uchi~\cites{buchi:automata,buchi:mso}]
  The class of $\omega$-regular languages over $\Sigma$ is effectively closed under Boolean operations ($\cup,\cap,\neg$) and images and preimages under homomorphisms.

  It is decidable, given an $\omega$-automaton $\mathscr A$, whether $L(\mathscr A)$ is empty.
\end{thm}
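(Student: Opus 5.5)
The plan is to prove each closure property by a direct automaton construction, and to reduce emptiness to a reachability question on a finite graph.

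\textbf{Union and intersection.} Union is the disjoint union of two automata, keeping both sets of initial states and both sets of residual states. For intersection I take the product automaton $\mathscr A\times\mathscr B$ and add a flag bit recording which of the two residual sets we are currently waiting for. The flag flips from ``waiting for $F_{\mathscr A}$'' to ``waiting for $F_{\mathscr B}$'' when an $F_{\mathscr A}$-state is visited, and flips back when an $F_{\mathscr B}$-state is visited. A state is residual when it is in $F_{\mathscr A}$ and the flag says ``waiting for $F_{\mathscr A}$''. A path visits such states infinitely often exactly when it visits both residual sets infinitely often.

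\textbf{Homomorphisms.} Let $h\colon\Sigma\to\Delta$ be a letter-to-letter homomorphism. For the image, relabel each edge $a$ by $h(a)$. For the preimage, replace each edge labelled $b$ by parallel edges labelled $a$, one for every $a\in h^{-1}(b)$. If homomorphisms sending letters to nonempty finite words are allowed, the image is handled by subdividing edges into paths. The preimage is handled by adding edges $s\to t$ labelled $a$ whenever $h(a)$ labels a path from $s$ to $t$; such an edge is marked as passing through $F$ if that path does, and this marking is moved onto states by a product with two copies.

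\textbf{Emptiness.} $L(\mathscr A)\neq\emptyset$ if and only if some residual state $f$ is reachable from $I$ and lies on a cycle. In that case the lasso word $uv^\omega$ is accepted, where $u$ labels a path from $I$ to $f$ and $v$ labels a cycle through $f$. Conversely, an accepting path visits some $f\in F$ infinitely often, which gives such a cycle. Both conditions are graph reachability questions, so emptiness is decidable.

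\textbf{Complementation.} This is the main obstacle. I would follow Büchi's Ramsey-based argument. For $u\in\Sigma^+$, let its \emph{type} record, for each pair $(s,t)$ of states, whether $u$ labels a path $s\to t$, and whether some such path passes through $F$. There are finitely many types, and for each type $\tau$ the set $L_\tau$ of words of that type is a regular language of finite words. By Ramsey's theorem, every $x\in\Sigma^\omega$ factors as $u_0u_1u_2\cdots$, where $u_0$ has some type $\sigma$ and all $u_i$ with $i\ge1$ share a single type $\tau$ satisfying $\tau\tau=\tau$. The corresponding set $L_\sigma L_\tau^\omega$ is then either contained in $L(\mathscr A)$ or disjoint from it: whether a word in it is accepted depends only on the pair $(\sigma,\tau)$. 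Hence $\Sigma^\omega\setminus L(\mathscr A)$ is the finite union of those sets $L_\sigma L_\tau^\omega$ that are disjoint from $L(\mathscr A)$. Each $U V^\omega$ with $U,V$ regular is $\omega$-regular by a direct construction, and which pairs $(\sigma,\tau)$ to include is computable, so the construction is effective. The key points to verify carefully are the Ramsey factorization, which uses an infinite-pigeonhole colouring of pairs of cut points, and the dichotomy for $L_\sigma L_\tau^\omega$.
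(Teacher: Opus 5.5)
Your proposal is correct and is the classical argument: product with a flag bit for intersection, relabelling and subdivision for images and preimages, the lasso criterion for emptiness, and B\"uchi's Ramsey-based complementation through the finitely many regular type classes $L_\sigma L_\tau^\omega$. The paper gives no proof of its own and only cites B\"uchi, whose original proof is the one you sketch; the only details to fix are that the first Ramsey cut point must be $\ge1$, so that $u_0\in\Sigma^+$, and that the flag must be updated on leaving an $F_{\mathscr A}$-state rather than on entering it.
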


It is also easy to see that the topological closure of $L$ is again automatic: an automaton for $\overline L$ may be defined by first removing useless states of $\mathscr A$ (those that cannot reach a cycle containing a residual state), and then making all states residual.

Let $L$ be an $\omega$-regular language. A binary relation $R\subseteq L\times L$ is \emph{$\omega$-regular} if it is the language of an $\omega$-automaton with alphabet $\Sigma\times\Sigma$.

We recall the usual algebraic operations on relations: $R^{-1}=\{(y,x)\mid(x,y)\in R\}$ and $RS = \{(x,z)\mid\exists y:(x,y)\in R,(y,z)\in S\}$, the \emph{total} relation $E=L\times L$ and \emph{diagonal} relation $\Delta=\{(x,x)\mid x\in L\}$, and the usual properties: \emph{univalent} relations ($\forall x:\exists^{\le1}y:R(x,y)$, equivalently $R R^{-1}\subseteq\Delta$), \emph{total} relations ($\forall x:\exists^{\ge1}y:R(x,y)$, equivalently $R R^{-1}\supseteq\Delta$, equivalently $R E=E$), \emph{injective} relations ($R^{-1}R\subseteq\Delta$), \emph{surjective} relations ($R^{-1}R\supseteq\Delta$), \emph{invertible} relations ($R^{-1}R=R R^{-1}=\Delta$), \emph{equivalence} relations ($R^2=R=R^{-1}\supseteq\Delta$), etc., and ponder whether they are $\omega$-regular.

It is easy to see that the composition of two $\omega$-regular relations, and the inverse of an $\omega$-regular relation, are $\omega$-regular. This merely follows from the closure of $\omega$-regular relations under intersection and projection. In particular, any collection of $\omega$-regular relations generates a semigroup called \emph{$\omega$-automatic semigroup}, which is a group if these relations are invertible, and is an inverse semigroup if these relations are univalent and injective.
\begin{defn}\label{def:aa}
  Let $G$ be a monoid acting on a set $X$. The action is called \emph{automatic} if there is an $\omega$-regular language $L$ and a surjective map $\pi\colon L\twoheadrightarrow X$ such that
  \[\mathscr G_g\coloneqq\{(x,y)\in L\mid \pi(x)=\pi(y)\cdot g\}\text{ is $\omega$-regular for all }g\in G.\]
  The language $L$ and the sets $\mathscr G_g$ define an \emph{automatic presentation} of the action.

  If furthermore $\pi$ is a bijection, then this presentation is called \emph{injective}.
\end{defn}
By the above remarks, if $G$ is generated by a set $S$ then it suffices to verify that the graph of $s$ is $\omega$-regular for all $s\in S$. Note that, from the definition, the kernel $\{(x,y)\mid \pi(x)=\pi(y)\}$ of $\pi$ is an $\omega$-regular relation.

\subsection{Automatic structures}\label{ss:automatic}
This notion fits nicely in the framework of \emph{$\omega$-automatic presentations of structures}, see~\cite{blumensath-graedel:as}. Consider a structure $\mathfrak X=\langle X,R_1,\dots\rangle$ consisting of a set $X$ and relations $R_i$, each with arity $n_i$ so $R_i\subseteq X^{n_i}$. An \emph{$\omega$-automatic presentation} of $\langle X,R_1,\dots\rangle$ consists in an $\omega$-regular language $L\subseteq\Sigma^\omega$, an $\omega$-automatic equivalence relation ${\approx}\subseteq L\times L$, and $\omega$-automatic relations $r_i\subseteq L^{n_i}$ such that there exists a bijection $\pi\colon L/{\approx}\leftrightarrow X$ with $r_i(x_1,\dots,x_{n_i})\Leftrightarrow R_i(\pi(x_1),\dots,\pi(x_{n_i}))$ for all $i$ and all $x_1,\dots,x_{n_i}\in L$. The presentation is \emph{injective} if $\approx$ is the identity relation.

We may thus formulate our definition as follows: an \emph{automatic action} of $G$ on $X$ is an $\omega$-automatic presentation of $\langle L,(\cdot g)_{g\in G}\rangle$.

Beware that this is much weaker than requiring the action itself to be automatic, in the sense that there would be a coding of $G$ by an $\omega$-regular language $M$ and an $\omega$-regular relation $\subseteq M\times L\times L$ expressing the action; see~\cite{bartholdi-m:actions2} for a discussion of when this happens.

The following fundamental result (proven by Kuske and Lohrey in the case of injective automatic structures (i.e.\ the coding map $L\twoheadrightarrow X$ is injective), and by B\'ar\'any, Kaiser and Rabinovich in the general case) gives ``automatic'' proofs of all first-order statements in an $\omega$-automatically presentable structure:
\begin{thm}[\cite{kuske-lohrey:counting,barany-kaiser-rabinovich:cardinality}]
  Let $\mathscr A$ be an $\omega$-automatic structure. Then there is an algorithm that, given any sentence in the first order logic of $\mathscr A$, determines whether it is true. This holds even when the first-order logic is enriched with counting and congruence quantifiers: $\exists^\kappa$ to mean there are precisely $\kappa$ solutions to an existential statement ($\kappa\le\aleph_0$ or $\kappa=2^{\aleph_0}$), and $\exists^{a\pmod m}$ to mean that the number of solutions is $\equiv a\pmod m$.\qed
\end{thm}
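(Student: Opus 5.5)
The plan is to prove, by induction on the structure of a formula $\varphi(x_1,\dots,x_k)$, the stronger statement that its set of solutions
\[
  \varphi^{\mathscr A}=\{(u_1,\dots,u_k)\in L^k\mid \mathscr A\models\varphi(\pi(u_1),\dots,\pi(u_k))\}
\]
is an effectively computable $\omega$-regular relation over $\Sigma^k$ (using convolution of words). By construction this relation is saturated under $\approx$ in each coordinate. A sentence is then the case $k=0$, and its truth reduces to non-emptiness, which is decidable by B\"uchi's theorem.

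\textbf{Base case and easy inductive steps.} Atomic formulas are handled directly: $R_i$ is given by $r_i$, and equality is given by $\approx$. Boolean connectives follow from the effective Boolean closure in B\"uchi's theorem; complement must be taken relative to $L^k$. For an existential quantifier $\exists y$, we take the projection (a homomorphic image) forgetting the coordinate $y$. This is correct even in the non-injective case, because $\varphi^{\mathscr A}$ is $\approx$-saturated, so quantifying over representatives is the same as quantifying over points of $X$.

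\textbf{Counting quantifiers.} The substantive work is $\exists^\kappa y\,\varphi(\bar x,y)$. Here we must show that, uniformly in the parameters $\bar u$, the number of $\approx$-classes in the fibre $\varphi_{\bar u}=\{v\mid(\bar u,v)\in\varphi^{\mathscr A}\}$ can be described by an $\omega$-regular condition on $\bar u$. I would proceed in three steps.
\begin{enumerate}
\item \emph{Finite counts.} ``At least $n$ classes'' is first-order definable using $\approx$:
\[
  \exists y_1\dots y_n\Big(\bigwedge_j \varphi(\bar x,y_j)\wedge\bigwedge_{i<j}\neg(y_i\approx y_j)\Big).
\]
Hence $\exists^{=n}$ is already regular by the easy steps.
\item \emph{Uniform bound on finite fibres.} Next I would show that there is a bound $N$, computable from the automaton for $\varphi^{\mathscr A}$ and for $\approx$, such that any finite fibre has at most $N$ classes. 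The idea is a pumping argument: if there are more classes than states, two accepting runs can be spliced to produce infinitely many pairwise inequivalent words. Granting this bound, ``infinitely many'' is equivalent to ``at least $N+1$''. The congruence quantifier $\exists^{a\bmod m}$ then becomes a finite disjunction of the formulas $\exists^{=n}$ with $n\le N$ and $n\equiv a\pmod m$.
\item \emph{Countable versus continuum.} Finally, I would separate $\aleph_0$ from $2^{\aleph_0}$ by showing that $\varphi_{\bar u}$ has $2^{\aleph_0}$ classes if and only if there is a ``branching'' witness in the product automaton. Such a witness is a reachable state from which two distinct loops lead, via accepting continuations, to pairwise $\approx$-inequivalent words. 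This gives a Cantor set of inequivalent solutions, and the property is $\omega$-regular in $\bar u$. Conversely, if no such witness exists, every class should contain an ultimately periodic word of bounded shape, which leaves only countably many classes.
\end{enumerate}

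\textbf{Main obstacle.} I expect the hard part to be the non-injective case of steps~2--3, where one counts $\approx$-classes rather than words. For injective presentations both steps are classical facts about Büchi automata, essentially K\"onig's lemma and the characterisation of uncountable $\omega$-regular languages. For general $\approx$, my first attempt would be to use Ramsey's theorem on factorisations of the convolution $u\otimes v$. The aim would be to show that $\approx$ restricted to a fibre is determined by finitely many ``types'' of infinite suffixes, in the manner of B\'ar\'any--Kaiser--Rabinovich. From this one would conclude that having infinitely many, respectively uncountably many, classes is witnessed by a regular pattern in the product of the automata for $\varphi^{\mathscr A}$ and $\approx$.
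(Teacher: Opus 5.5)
The paper gives no proof of this statement; it quotes it from \cite{kuske-lohrey:counting,barany-kaiser-rabinovich:cardinality}. Your treatment of plain first-order logic is the standard argument and is fine: atomic formulas via $r_i$ and $\approx$, Boolean operations relative to $L^k$, projection justified by $\approx$-saturation, and an emptiness test at the end. Step~1 is also fine.

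The counting part, however, rests on Step~2, and Step~2 is false, already for injective presentations. Take $L=\{0,1\}^\omega$ with $\approx$ the identity, and $R=\{(x,y)\mid y_i\le x_i\text{ for all }i\}$. This $R$ is recognised by a one-state safety automaton with letters $(0,0),(1,0),(1,1)$. The fibre of $1^k0^\omega$ has exactly $2^k$ elements, and the fibre of a word with infinitely many $1$'s has $2^{\aleph_0}$. So finite fibres are unbounded. For any $N$, your formula $\exists^{\ge N+1}y\,R(x,y)$ holds at $x=1^k0^\omega$ with $2^k>N$, although that fibre is finite. Likewise, your finite disjunction for $\exists^{a\bmod m}$ misses every fibre with more than $N$ elements.

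The pumping argument fails because a run reads $\bar u\otimes v$. Splicing two runs at a common state is harmless, but iterating a loop also iterates a factor of the \emph{fixed} parameter $\bar u$. Even for automatic structures over finite words, the correct statement is ``infinitely many solutions iff there is one of length $>|\bar u|+C$'', and that bound depends on $\bar u$.

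Step~3 has the same defect. A ``two loops'' pattern cannot be pumped against a non-periodic parameter. The claimed converse is also false: the fibre of equality at a non-ultimately-periodic $u$ is $\{u\}$, which contains no ultimately periodic word.

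What works in the injective case is different.
\begin{itemize}
\item \emph{Finite versus infinite.} By compactness of $\Sigma^\omega$, a fibre $K$ is infinite iff it has an accumulation point. Writing $|y\wedge z|$ for the length of the longest common prefix, this reads $\exists z\,\forall y\neq z\,\exists y'\in K\setminus\{z\}:|y'\wedge z|>|y\wedge z|$, with $z,y$ ranging over $\Sigma^\omega$. Comparing common-prefix lengths is $\omega$-regular, so this condition is regular in $\bar u$ by B\"uchi's closure properties.
\item \emph{Countable versus continuum.} A uniform bound does exist here, but it counts tail classes, not elements. If the automaton for $\varphi^{\mathscr A}$ has $n$ states, the fibre has $2^{\aleph_0}$ elements iff it contains $n+1$ pairwise non-ultimately-equal elements. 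Indeed, two of their runs meet in a common state infinitely often, and swapping suitably spaced segments between meetings independently yields a Cantor set of solutions. Otherwise the fibre lies in at most $n$ tail classes, each countable.
\item \emph{Modular counting.} This requires a separate automaton construction; it cannot be reduced to a bounded disjunction.
\end{itemize}

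The non-injective case is harder still, because accumulating words say nothing about the number of $\approx$-classes. Handling it is precisely the $\omega$-semigroup/Ramsey analysis of \cite{barany-kaiser-rabinovich:cardinality}, which your last paragraph names but does not carry out. As written, your argument only establishes decidability of first-order logic with the finite counting quantifiers $\exists^{=n}$.
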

In fact, and we shall need this extra information, \emph{every predicate (relation) defined by first-order quantifiers and logical operations is effectively automatic.}

\subsection{Topology}\label{ss:top}
Let us consider now the case that $X$ is a topological space, and that the action of $G$ is by homeomorphisms. Note that we have not required the surjective map $\pi\colon L\twoheadrightarrow X$ to be continuous for the topology induced on $L$ as a subset of $\Sigma^\omega$. If this is the case, we call the action \emph{topologically automatic}.

We can in fact afford a little bit more flexibility: let $(\mathcal U_i)_{i\in I}$ be a basis of the topology of $X$; extend the encoding of $X$ to an encoding of $X\sqcup I$, and add the relation $\{(x,i)\mid\pi(x)\in\mathcal U_i\}$ to the structure.

If $X$ is metrizable and compact, we can encode the uniform structure on $X$ in a simple manner, as follows: extend the encoding of $X$ to an encoding of $X\sqcup\N$, by extending the language $L$ to $L\sqcup\{1^*\mathvisiblespace^\omega\}$, with $\pi(1^n\mathvisiblespace^\omega)=n\in\N$; we represent $X$ as before using $L$, and represent $\N$ in unary. Let $(\mathcal U_n)_{n\in\N}$ be a family of neighbourhoods of the diagonal $\Delta(X)$ in $X\times X$. It may be represented the relation
\[\mathscr U\coloneqq\{(x,y,1^n\mathvisiblespace^\omega)\in L\times L\times(1^*\mathvisiblespace^\omega)\mid (\pi(x),\pi(y))\in \mathcal U_n\}.\]
If there exists a family of neighbourhoods $(\mathcal U_n)_{n\in\N}$ as above such that $\mathscr U$ is $\omega$-regular, we shall call the action \emph{uniformly automatic}.

In particular, if the map $\pi\colon L\to X$ is continuous for the topology of $L$ as a subset of $\Sigma^\omega$, then $\mathcal U_n=\{((\pi(x),\pi(y))\mid x,y\text{ agree on their first $n$ letters at least}\}$ generates a uniform structure on $X$. Assuming first that $\pi\colon L\to X$ is a homeomorphism, $\mathscr U$ is the relation
\[\{(x,y,1^n\mathvisiblespace^\omega)\mid x\text{ and $y$ agree on their first $n$ letters at least}\},\]
given by the automaton
\[\begin{fsa}
    \node[state,initial above] (a) at (0,0) {};
    \node[state,accepting below] (b) at (3,0) {};
    \draw (a) edge[loop left,looseness=25] node {$\Delta(\Sigma)\times\{1\}$} (a)
    (a) edge node {$\Sigma\times\Sigma\times\{\mathvisiblespace\}$} (b)
    (b) edge[loop right,looseness=25] node {$\Sigma\times\Sigma\times\{\mathvisiblespace\}$} (b);
  \end{fsa},\]
where here and below we write $\Delta(x)=(x,x)$ for the diagonal map $Y\to Y\times Y$, for any $Y$. Indeed accepting paths in the automaton consist in finitely many loops at the leftmost state followed by infinitely many edges labeled $\Sigma\times\Sigma\times\{\mathvisiblespace\}$, so their label is of the form $(x_0,x_0,1)\dots(x_{n-1},x_{n-1},1)(x_n,y_n,\mathvisiblespace)(x_{n+1},y_{n+1},\mathvisiblespace)\dots$. In the general case that $\pi\colon L\to X$ is not injective, this relation $\mathscr U$ must be saturated with the kernel of $\pi$, namely replaced by
\[\{(x,y,n)\in L\times L\times(1^*\mathvisiblespace^\omega)\mid\exists x'\approx x,y'\approx y,(x',y',n)\in\mathscr U\}=({\approx}\times1)\cdot\mathscr U\cdot({\approx}\times1).\]

\subsection{Automatic groups}
A special case of automatic actions has been considered since the 1980's, under the name of \emph{automatic groups}, see~\cite{epstein-:wp}. Consider a group $G$ with a finite generating set $S$. The group $G$ is called \emph{automatic} if, firstly, there is a \emph{regular normal form}: a regular language $N\subseteq S^*$ for which the evaluation map $\pi\colon N\to G$ is surjective, and secondly, for all $s\in S\cup\{1\}$ there is a \emph{multiplication automaton}: a finite state automaton $\mathscr A_s$ with alphabet $(S\sqcup\{\mathvisiblespace\})^2$ accepting the language
\begin{equation}\label{eq:agautomata}
  \left\{(u_1,v_1)\cdots(u_n,v_n)\left| \begin{array}{c}
                                     \max(\ell,m)=n,\\
                                     u_1\dots u_\ell\in N, u_{\ell+1}=\dots=\mathvisiblespace,\\
                                     v_1\dots v_m\in N,v_{m+1}=\dots=\mathvisiblespace,\\
                                     \pi(u_1\dots u_\ell)=\pi(v_1\dots v_m)s
                                   \end{array}\right.\right\}.
\end{equation}
If we assume (as we may) that the evaluation map $\pi\colon N\to G$ is bijective, then the multiplication automaton $\mathscr A_s$ converts the normal form of any element $g\in G$ to the normal form of $g s$. Automatic groups cover important classes of groups, in particular all Abelian groups, word-hyperbolic groups, and fundamental groups of $3$-manifolds without nilpotent or soluble pieces; and possess valuable properties such as being finitely presented and having word problem solvable in quadratic time.

It follows readily from the definition that the action of an automatic group on itself is $\omega$-regular in the sense of Definition~\ref{def:aa}. Indeed take as $\omega$-regular language the set of normal forms with an infinite amount of padding: $L=\{w\mathvisiblespace^\omega\mid w\in N\}$. Then the paddings of the languages in~\eqref{eq:agautomata} are $\omega$-regular relations. We have proven:
\begin{prop}
  An automatic group $G$ is precisely the same thing as an $\omega$-regular action of $G$ on $X=G$ by right multiplication such that the language $L\subseteq S^*\mathvisiblespace^\omega\subseteq(S\sqcup\mathvisiblespace)^\omega$ has as alphabet a generating set of $G$, the map $\pi\colon L\to X$ being given by evaluation (with $\mathvisiblespace$ evaluating to $1$).\qed
\end{prop}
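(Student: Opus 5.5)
The proposition has two directions, and I will prove each by converting automata between finite-word and padded $\omega$-word form. Throughout, $\mathvisiblespace$ is treated as an ordinary letter of the enlarged alphabet $S\sqcup\{\mathvisiblespace\}$.

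\emph{From an automatic group to an $\omega$-regular action.} Let $G$ be automatic with normal form $N$ and multiplication automata $\mathscr A_s$ for $s\in S\cup\{1\}$. Set $L=\{w\mathvisiblespace^\omega\mid w\in N\}$ and let $\pi$ be evaluation. A B\"uchi automaton for $L$ is obtained from a finite automaton for $N$ as follows: add one new state $p$, which is the only residual state and carries a loop labelled $\mathvisiblespace$; then, from every accepting state of the $N$-automaton, add an edge labelled $\mathvisiblespace$ into $p$. Accepting runs must end in $p$ reading $\mathvisiblespace$ forever, so the accepted language is exactly $L$. The same construction applied to $\mathscr A_s$ (over the alphabet $(S\sqcup\{\mathvisiblespace\})^2$, with a loop labelled $(\mathvisiblespace,\mathvisiblespace)$) produces the language of~\eqref{eq:agautomata} followed by $(\mathvisiblespace,\mathvisiblespace)^\omega$. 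This is exactly $\{(x,y)\in L\times L\mid \pi(x)=\pi(y)s\}$: a pair of padded normal forms, aligned letter by letter, has the form ``finite word in~\eqref{eq:agautomata}, then $(\mathvisiblespace,\mathvisiblespace)^\omega$''. It therefore defines $\mathscr G_s$ for right multiplication by $s$, up to the orientation convention for $\mathscr G_g$. Since the generators $S$ act by $\omega$-regular relations, the remark after Definition~\ref{def:aa} (closure under composition and inversion) gives that every $\mathscr G_g$ is $\omega$-regular.

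\emph{From an $\omega$-regular action to an automatic group.} Conversely, suppose $L\subseteq S^*\mathvisiblespace^\omega$ is $\omega$-regular, evaluation is surjective onto $G$, and each $\mathscr G_s$ is $\omega$-regular. Define $N=\{w\in S^*\mid w\mathvisiblespace^\omega\in L\}$. The key step is to show that $N$ is regular and that the unpadded version of $\mathscr G_s$ is regular.

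For $N$: take a B\"uchi automaton $\mathscr B$ for $L$. Declare a state $q$ accepting for finite words exactly when $q$ lies on a run reading $\mathvisiblespace^\omega$ that visits residual states infinitely often. This condition is decidable, since it is a lasso condition on the finite subgraph of $\mathvisiblespace$-edges. Then restrict to runs reading letters of $S$ before reaching $q$. The resulting finite automaton recognizes $N$.

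For the multiplication automata: the same construction applies to the automaton for $\mathscr G_s$, whose words have the shape (finite word over $(S\sqcup\{\mathvisiblespace\})^2$)$(\mathvisiblespace,\mathvisiblespace)^\omega$. Using the cut point ``from here on everything is $(\mathvisiblespace,\mathvisiblespace)$'' yields a regular language of finite padded pairs. Intersecting with the regular language of pairs in the form required by~\eqref{eq:agautomata} (each component is a word of $N$ followed only by $\mathvisiblespace$'s, and the last letter is not $(\mathvisiblespace,\mathvisiblespace)$) gives exactly~\eqref{eq:agautomata}.

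The main obstacle I expect is this last normalization of trailing padding. A finite word may be followed by extra $(\mathvisiblespace,\mathvisiblespace)$ symbols, and the cut point must be chosen at exactly $\max(\ell,m)$. Intersecting with a regular condition that forbids a final $(\mathvisiblespace,\mathvisiblespace)$ letter resolves this.
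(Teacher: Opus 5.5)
Your proof is correct. Your first direction is the paper's argument: pad the normal forms by $\mathvisiblespace^\omega$ and note that the padded languages \eqref{eq:agautomata}, followed by $(\mathvisiblespace,\mathvisiblespace)^\omega$, are $\omega$-regular. You simply write out the B\"uchi construction and the closure under composition and inversion that the paper leaves implicit.

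The paper gives no argument at all for the converse. Your cut-point construction fills this in correctly: take as final states exactly those states from which $\mathvisiblespace^\omega$ (resp.\ $(\mathvisiblespace,\mathvisiblespace)^\omega$) is B\"uchi-accepted, then intersect with the words not ending in $(\mathvisiblespace,\mathvisiblespace)$.

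Two cosmetic remarks:
\begin{itemize}
\item No ``orientation'' adjustment is needed. The condition $\pi(x)=\pi(y)\cdot g$ in Definition~\ref{def:aa} matches $\pi(u)=\pi(v)s$ in \eqref{eq:agautomata} verbatim.
\item In the final intersection, only the ``no trailing $(\mathvisiblespace,\mathvisiblespace)$'' condition does any work. Membership in $\mathscr G_s\subseteq L\times L$ already forces both projections into $S^*\mathvisiblespace^*$.
\end{itemize}
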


A large part of the theory of automatic groups survives if one drops the assumption that the map $\pi$ be given by evaluation. One arrives then at the notion of \emph{Cayley-automatic groups}, see~\cite{kharlampovich-k-m:automatic}; the terminology comes from the Cayley graph of $G$ being given by an automatic structure. We record:
\begin{prop}
  A \emph{Cayley-automatic group} is precisely the same thing as an $\omega$-regular action of a finitely generated group $G$ on $X=G$ by right multiplication.
\end{prop}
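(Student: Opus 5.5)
This is an equivalence of definitions, so I would prove it by translating in both directions, as in the previous proposition for automatic groups but with the evaluation requirement dropped. I take the standard definition of Cayley-automatic groups from~\cite{kharlampovich-k-m:automatic}. There $G$ is finitely generated by $S$, and we have a regular language $N\subseteq\Lambda^*$ over some finite alphabet $\Lambda$ with a bijection $\psi\colon N\to G$. For every $s\in S$ the relation $\{(u,v)\in N\times N\mid \psi(v)=\psi(u)s\}$ is regular after padding, i.e.\ recognized by a synchronous two-tape automaton over $(\Lambda\sqcup\{\mathvisiblespace\})^2$.

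For the forward direction, I would set $L=\{w\mathvisiblespace^\omega\mid w\in N\}$ and $\pi(w\mathvisiblespace^\omega)=\psi(w)$. Then $L$ is $\omega$-regular: take a finite automaton for $N$ and add a new residual state reached from its accepting states, carrying a $\mathvisiblespace$-loop. For each generator $s$, the padded multiplication automaton is turned into a Büchi automaton the same way: after both tapes are exhausted, enter a residual sink looping on $(\mathvisiblespace,\mathvisiblespace)$. This recognizes $\mathscr G_s$ in the orientation of Definition~\ref{def:aa} (reverse the tapes if needed, since inverses of $\omega$-regular relations are $\omega$-regular). By the remark after Definition~\ref{def:aa}, regularity for generators is enough. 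So right multiplication is an automatic action, and in fact an injective one.

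For the converse, let $\pi\colon L\twoheadrightarrow G$ be an automatic presentation of right multiplication, with $G$ generated by a finite $S$. The main obstacle is that $L$ consists of infinite words and $\pi$ need not be injective, whereas a Cayley-automatic structure needs finite words in bijection with $G$. I would handle this in two steps.

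First, make the presentation injective. The kernel ${\approx}=\mathscr G_1$ is an $\omega$-regular equivalence relation. Fix an $\omega$-regular well-order-like selector, such as the lexicographic order, and pick the lex-least element of each class. When such a minimum exists, this selection is first-order definable, hence $\omega$-regular by the Kuske--Lohrey / Bárány--Kaiser--Rabinovich theorem and the remark that definable predicates are effectively automatic. Classes without a lex-minimum are a genuine difficulty. Here countability helps: $G$ is countable, so every class is a countable union, and I would instead use the uniformization result of Bárány--Kaiser--Rabinovich for $\omega$-automatic structures with countably many classes, which yields an injective presentation.

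Second, pass to finite words. By the same countability, an injective $\omega$-automatic presentation of a countable structure is equivalent to an automatic presentation over finite words; this is again a theorem of Bárány--Kaiser--Rabinovich, stating that countable $\omega$-automatic structures are automatic. Applying it to $\langle G,(\cdot s)_{s\in S}\rangle$ gives a regular $N$ together with regular graphs of the generators, which is exactly a Cayley-automatic structure. If one is unwilling to invoke that theorem, the fallback is to restrict the statement to presentations of the form $L\subseteq\Lambda^*\mathvisiblespace^\omega$ and truncate the padding. With that restriction the converse becomes as immediate as the forward direction.
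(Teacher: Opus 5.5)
Your proposal is correct and follows the paper's route. The forward direction is the padding construction from the preceding proposition, and the converse rests on the theorem that a countable $\omega$-automatic structure is automatic, which is all the paper's proof invokes. One correction to the attribution: that theorem is due to Kaiser, Rubin and B\'ar\'any~\cite{kaiser-rubin-barany:cardinality}, not B\'ar\'any--Kaiser--Rabinovich. It already includes your first step, namely an $\omega$-regular set of representatives for an $\omega$-regular equivalence relation of countable index, so your two steps reduce to that single citation.
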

\begin{proof}
  The only subtle point to note Cayley-automatic groups are defined in terms of automatic structures, not $\omega$-automatic structures. However, a countable $\omega$-automatic structure is also automatic, see~\cite{kaiser-rubin-barany:cardinality}.
\end{proof}
 
\subsection{Automata groups}\label{ss:autgroup}
Let $\mathscr M$ be a \emph{Mealy machine}: a finite graph with vertex set $S$ called its \emph{set of states}, and on every edge, an \emph{input} and \emph{output} symbol in $\Sigma$. Assume that, at every $s\in S$ and for every $\sigma\in\Sigma$, there is a unique edge issued from $s$ and with input symbol $\sigma$. Then every choice of initial state $s\in S$ induces a transformation $T_s\colon\Sigma^*\righttoleftarrow$, by the rule: given $\sigma_1\dots\sigma_n\in\Sigma^*$, there is a unique path starting at $v_0$ and with input label $\sigma_1\dots\sigma_n$; the output label along the same path is, by definition, $T_s(\sigma_1\dots\sigma_n)$.

This action can be viewed geometrically: $\Sigma^*$ may be viewed as the vertex set of a tree $\mathcal T_\Sigma$, rooted at the empty word $\varepsilon$, and with an edge between $\sigma_1\dots\sigma_n$ and $\sigma_1\dots\sigma_n\sigma_{n+1}$ for all $\sigma_i\in\Sigma$. Then the transformation $T_s$ is an isometry of $\mathcal T_\Sigma$.

There are various definitions of automata groups; one of them is that it is a group $G=\langle T_{s_1},\dots,T_{s_n}\rangle$ for some choices of initial state $s_1,\dots,s_n$ in a Mealy machine $\mathscr M$. (One could choose each $s_i$ initial in its own Mealy machine $\mathscr M_i$, but this is not more general, since one may let $\mathscr M$ be the disjoint union of all $\mathscr M_i$.) Another, more restrictive definition is that it is a group of the form $G(\mathscr M)=\langle T_s:s\in S\rangle$ where $s$ ranges over \emph{all} possible choices of initial state in a Mealy machine $\mathscr M$. We call these \emph{self-similar automata groups} to distinguish them from the general class.

The definitions above make sense for isometries and groups of isometries of $\mathcal T_\Sigma$. Every isometry $g\colon\mathcal T_\Sigma\righttoleftarrow$ has \emph{states}, given by the tail actions of $g$: for $v\in\Sigma^*$, consider the composition of maps
\[\Sigma^*\to v\Sigma^*\to (v\Sigma^*)\cdot g=(v\cdot g)\Sigma^*\to\Sigma^*,\]
the first map being pre-catenation of $v$, the second being $g$, and the third being removal of the initial $v\cdot g$. This composition is again a tree isometry, called the \emph{state} $g@v$ of $g$ at $v$. An isometry is called \emph{finite-state} if its set of states is finite, and then $g$ may be described by an initial Mealy machine, with stateset $\{g@v\mid v\in\Sigma^*\}$, initial state $g=g@\varepsilon$, and an edge from $g@v$ to $g@(v\sigma)$ with input label $\sigma$ and output label $\sigma\cdot g$. A group is \emph{finite-state} if it consists of finite-state elements; and a group is \emph{self-similar} if it contains the states of all its elements.

Let $G$ be a self-similar group. There is then a naturally associated \emph{wreath decomposition} map, which is an injective homomorphism
\[\Psi\colon\begin{cases} G &\to G\wr\operatorname{Sym}(\Sigma)=G^\Sigma\rtimes\operatorname{Sym}(\Sigma)\\
              g &\mapsto (g@\sigma)_{\sigma\in\Sigma}\cdot(g\restriction\Sigma).
            \end{cases}
\]
If furthermore $G$ is finitely generated and finite-state, then we may assume that it is generated by a finite set $S$ closed under taking states, so $\Psi(S)\subseteq(S^\Sigma)\cdot\operatorname{Sym}(\Sigma)\subseteq(S\times\Sigma)^\Sigma$, and we recover a Mealy machine with stateset $S$ and an edge from $s$ to $t$ with input label $\sigma$ and output label $\tau$ whenever $\Psi(s)(\sigma)=(t,\tau)$. Thus the descriptions by Mealy machines, tree automorphisms and wreath decompositions are readily convertible into each other.

A prominent example of self-similar automata group is the \emph{Grigorchuk group} $G(\mathscr M)$ for the Mealy machine
\begin{equation}\label{eq:grigorchuk}
  \mathscr M=\begin{fsa}[baseline=10mm]
  \node[state,accepting above] (b) at (1.4,3) {$b$};
  \node[state,accepting] (d) at (4.2,3) {$d$};
  \node[state,accepting] (c) at (2.8,0.7) {$c$};
  \node[state] (a) at (0,0) {$a$};
  \node[state,accepting below] (e) at (5.6,0) {$1$};
  \path (b) edge node[left] {$\Delta1$} (c) edge node[left] {$\Delta0$} (a)
        (c) edge node {$\Delta1$} (d) edge node {$\Delta0$} (a)
        (d) edge node[above] {$\Delta1$} (b) edge node {$\Delta0$} (e)
        (a) edge[bend right=30] node[below] {$(0,1)$} node[above] {$(1,0)$} (e)
        (e) edge[loop right] node[above=1mm] {$\Delta0$} node[below=1mm] {$\Delta1$} (e);
\end{fsa}\end{equation}
Thus $G=\langle a,b,c,d\rangle$, for the wreath decomposition
\[\Psi\colon\begin{cases} G &\to G\wr\operatorname{Sym}(\{0,1\})=G^2\rtimes\langle \tau_{01}\rangle\\
              a &\mapsto (1,1)\cdot\tau_{01},\\
              b &\mapsto (a,c),\\
              c &\mapsto (a,d),\\
              d &\mapsto (1,b).
            \end{cases}
\]
          
\begin{thm}[Grigorchuk~\cite{grigorchuk:burnside,grigorchuk:growth}]
  The group $G$ is a finitely generated torsion group of intermediate word growth: the group is infinite yet every element has finite order, and the number of group elements expressible as a product of at most $n$ generators is bounded between $\exp(n^{0.5})$ and $\exp(n^{0.77})$.
\end{thm}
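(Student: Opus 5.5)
The plan follows Grigorchuk's original arguments, in three parts: the group is infinite, every element has finite order, and the growth lies between the two stated exponentials. Everything rests on the wreath decomposition $\Psi$ and the relations it makes visible. All of $a,b,c,d$ are involutions. The elements $b,c,d$ commute, and $\{1,b,c,d\}$ is a Klein four-group with $bcd=1$. Every element can therefore be written as an alternating word $[x]a\,y_1\,a\,y_2\cdots$ with $y_i\in\{b,c,d\}$.

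For infiniteness I would use the stabiliser of the first level, $H=\operatorname{Stab}_G(1)$, which has index $2$ and contains $b,c,d,aba,aca,ada$. Restricted to $H$, the map $\Psi$ takes values in $G\times G$, and its projection to either coordinate is onto $G$: for example $b\mapsto(a,c)$ and $aba\mapsto(c,a)$, and similarly for the others. So $G$ surjects onto the proper subgroup $H$, which is impossible for a finite group; hence $G$ is infinite.

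For torsion I would induct on the length $|g|$ of a minimal alternating word. Words of length at most $3$ are checked directly; for instance $ad$ has order $4$ and $ab$ has order $16$. In general:
\begin{itemize}
\item If $g\in H$ with $|g|=2m$, the two coordinates of $\Psi(g)$ have length at most $m$, since each $a y a$ or $y$ contributes at most one letter to each coordinate. Induction then applies.
\item If $g\notin H$, pass to $g^2\in H$. Its coordinates $g_0$ and $g_1$ are conjugate, with $g_0g_1$-type products of length at most $|g|$.
\end{itemize}
The hardest case is when $|g_0|=|g|$, so there is no strict decrease. Here I would use that $d\mapsto(1,b)$ shortens words: if $g$ contains the letter $d$, one more level of decomposition gives strict shortening. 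If $g$ contains only $a,b,c$, one or two further decompositions produce words involving $d$, and the length still drops. Bookkeeping this case analysis is the main obstacle.

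For the growth bounds, the upper bound $\exp(n^{0.77})$ needs Bartholdi's refined argument: a weighted length $\ell$ with weights chosen as the solution of a cubic, satisfying $\ell(g_0)+\ell(g_1)\le\eta(\ell(g)+C)$ with $\eta<1$ after three levels of decomposition. This yields $\gamma(n)\le\exp(n^{\alpha})$ with $\alpha=\log 2/\log(2/\eta)\approx0.767$. For the lower bound, $G$ is commensurable with $G\times G$ and $G$ is not virtually nilpotent, which gives $\gamma(n)^2\precsim\gamma(Cn)$ and hence $\exp(n^{1/2})$.
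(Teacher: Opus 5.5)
\textbf{Verdict: there is a genuine gap, in the lower bound.} The paper gives no proof of this statement. It is quoted from Grigorchuk, and the exponent $0.77$ actually comes from the later work of Bartholdi that the paper cites right after it. So your outline has to stand on its own.

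\textbf{The lower bound.} Abstract commensurability of $G$ with $G\times G$ only gives $\gamma(n)^2\le K\gamma(Cn)$ for some uncontrolled constant $C$. Such an inequality yields no more than $\log\gamma(n)\ge c\,n^{\log 2/\log C}$, because $\exp(n^{\log 2/\log C})$ satisfies it with equality. To get the exponent $1/2$ you need $C=4$, i.e.\ an explicit lift from $G\times G$ to $G$, up to bounded ambiguity, that at most doubles word length.

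One way to get this is the substitution $\sigma\colon a\mapsto aca,\ b\mapsto d,\ c\mapsto b,\ d\mapsto c$. It satisfies $\Psi(\sigma(w))=(\tau(w),w)$ with $\tau(w)\in\langle a,d\rangle$, a dihedral group of order $8$, and $|\sigma(w)|\le 2|w|+1$ on geodesic words. Then $(g_0,g_1)\mapsto a\sigma(g_0)a\,\sigma(g_1)$ has $\Psi$-image $(g_0\tau(g_1),\tau(g_0)g_1)$. This map is at most $64$-to-one from pairs of elements of length $\le n$ into the ball of radius $4n+4$. Hence $\gamma(n)^2\le 64\,\gamma(4n+4)$, and iterating gives $\gamma(n)\ge\exp(c\sqrt n)$.

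Non-virtual-nilpotence plays no role here: the inequality alone already excludes polynomial growth. It is the relevant hypothesis only if you instead quote Grigorchuk's theorem that a residually finite $2$-group which is not virtually nilpotent has growth at least $\exp(c\sqrt n)$.

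\textbf{Smaller corrections.}
\begin{itemize}
\item In the infiniteness step the direction is reversed. A finite group can surject onto a proper subgroup, so the sentence as written proves nothing. What your computation actually shows is that the index-$2$ subgroup $H$ surjects onto $G$ (via $\Psi$ followed by a coordinate projection). That is impossible for finite $G$, since $|H|=|G|/2$.
\item In the torsion step, if $g\notin H$ contains $d$, the strict shortening already happens in the coordinates of $g^2$. Each $d$ in $g$ appears in $g^2$ once bare and once conjugated by $a$, so it contributes a trivial letter to each coordinate. Further levels are needed only for words in $a,b,c$: one more if $c$ occurs, two more for $(ab)^k$.
\item In the upper bound, Bartholdi's inequality $\ell(g_0)+\ell(g_1)\le\eta(\ell(g)+\ell(a))$ holds for $g\in H$ after a \emph{single} level of decomposition. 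The weights are $\ell(a)=1-\eta^3$, $\ell(b)=\eta^3$, $\ell(c)=1-\eta^2$, $\ell(d)=1-\eta$. The cubic $X^3+X^2+X-2$ comes from the $3$-cycle $b\to c\to d\to b$ in $\Psi$ combined with the triangle inequality $\ell(b)\le\ell(c)+\ell(d)$. The three-level estimate is Grigorchuk's unweighted one, which only gives the exponent $\log_{32}31\approx0.991$.
\end{itemize}
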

(for the sharpest results on estimates of the growth function see~\cite{bartholdi:upperbd,erschler-zheng:growth}).

We may view automata groups as special cases of automatic actions: more precisely,
\begin{prop}
  Let $G$ be an automata group acting on a tree $\mathcal T_\Sigma$; then the action of $G$ on the boundary $\partial\mathcal T_\Sigma=\Sigma^\omega$ is automatic and equicontinuous.
\end{prop}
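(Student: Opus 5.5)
The plan is to take the identity map $\pi\colon\Sigma^\omega\to\Sigma^\omega$ as the encoding, so the presentation is injective, and to show that each generator has an $\omega$-regular graph. Since $G$ is an automata group, it is generated by finitely many transformations $T_{s_1},\dots,T_{s_n}$ with initial states in a Mealy machine $\mathscr M$ over $\Sigma$. By the remark after Definition~\ref{def:aa}, it suffices to check that each graph $\mathscr G_{s_i}$ is $\omega$-regular. The inverses pose no problem: $\mathscr G_{g^{-1}}=\mathscr G_g^{-1}$, and inverses of $\omega$-regular relations are $\omega$-regular.

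For a state $s$ of $\mathscr M$, I would build the B\"uchi automaton directly from $\mathscr M$. Its states are the states of $\mathscr M$, $s$ is the only initial state, and every state is residual (a safety automaton). Each Mealy edge $t\to t'$ with input $\sigma$ and output $\tau$ becomes an edge labeled $(\sigma,\tau)\in\Sigma\times\Sigma$.

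The accepted language is exactly the graph of $T_s$ extended to $\Sigma^\omega$. In one direction, $\mathscr M$ is deterministic and complete on inputs, so every $x\in\Sigma^\omega$ has a unique infinite path from $s$ with input label $x$. That path visits residual states infinitely often, and its output label is the limit of $T_s(x_1\dots x_n)$, which is by definition $x\cdot T_s$ on the boundary. In the other direction, any accepting path with label $(x,y)$ is that same unique path, so $y=x\cdot T_s$. Hence $\mathscr G_{T_s}$ is $\omega$-regular, and so is $\mathscr G_g$ for every $g\in G$, by closure under composition.

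For equicontinuity, I would use the fact that each $T_s$, and hence each $g\in G$, is an isometry of the tree $\mathcal T_\Sigma$ fixing the root. Such a $g$ therefore preserves word length and prefixes, so it acts on $\Sigma^\omega$ preserving the length of the longest common prefix. For the ultrametric $d(x,y)=2^{-|x\wedge y|}$, where $x\wedge y$ is the longest common prefix, every $g$ is an isometry, which gives equicontinuity of the whole family. I could also note that the action is uniformly automatic in the sense of~\S\ref{ss:top}, using the displayed automaton for $\mathscr U$, since $\pi$ is the identity. No step is a real obstacle. The only point needing care is the claim that the boundary action is the limit of the finite-word action, which follows from prefix-compatibility of $T_s$.
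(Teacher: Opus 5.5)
Your proof is correct and follows the paper's route. The paper's proof consists only of the observation that $G$ acts by isometries of $\mathcal T_\Sigma$, which extend to isometries of $\Sigma^\omega$ for the ultrametric $d(\sigma_1\sigma_2\dots,\tau_1\tau_2\dots)=\exp(-\min\{i\mid\sigma_i\neq\tau_i\})$ (your $2^{-|x\wedge y|}$ up to the base), while the automaticity part it leaves implicit is exactly your conversion of the Mealy machine into a safety automaton over $\Sigma\times\Sigma$, which you carry out soundly.
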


It would be nice to have a converse statement. If $G\curvearrowright L$ is automatic and equicontinuous, then there exists a metric on $L$ compatible with the topology such that $G$ acts by isometries; and since $L$ is totally disconnected this metric may be assumed to be ultrametric. The action of $G$ on $L$ induces then an action of $G$ on balls in $L$, which form a rooted tree under inclusion. I expect this tree to possess some sort of regularity, in such a manner that the action is given by automata.

\begin{proof}
  The group $G$ acts by isometries on $\mathcal T_\Sigma$, and this action extends to an action by isometries on $\Sigma^\omega$, for the usual metric $d(\sigma_1\sigma_2\dots,\tau_1\tau_2\dots)=\exp(-\min\{i\mid\sigma_i\neq\tau_i\})$.
\end{proof}

An important property of self-similar groups (namely, groups $G$ endowed with an imbedding $\Psi\colon G\to G\wr\operatorname{Sym}(\Sigma)$) is \emph{contraction}. A self-similar group $G$ is \emph{contracting} if there exists a finite subset $\mathscr N\subseteq G$ with the following property: for all $g\in G$ we have $g@v\in \mathscr N$ for all but finitely many $v\in\Sigma^*$. There is clearly a minimal such $\mathscr N$, which is called the \emph{nucleus} of $G$. If $G$ is finitely generated, this is equivalent to contraction of the word metric on $G$: \emph{there are $n\in\N,\lambda<1,K\in\R$ such that $\|g@v\|\le\lambda\|g\|+K$ for all $g\in G$ and all $v\in\Sigma^n$}.

For example, the Grigorchuk group defined above is contracting: the nucleus is $\{1,a,b,c,d\}$, and the metric contraction occurs with $n=1,\lambda=K=\tfrac12$.

The nucleus of a contracting self-similar group is more than a set: we have $g@v\in \mathscr N$ for all $g\in \mathscr N,v\in\Sigma$, so $\mathscr N$ is the vertex set of an automaton with a transition from $g\in \mathscr N$ to $g@v\in \mathscr N$ with input $v$ and output $v\cdot g$. Furthermore every state in this automaton has an incoming transition.

\subsubsection{Nekrashevych's simple torsion groups of intermediate growth}

A small modification of the Grigorchuk group's automaton produces a torsion group of intermediate growth with a simple subgroup of finite index. See~\cite{nekrashevych:palindromic} for the original construction, and~\cite{bartholdi-n-z:lingrowth} for the construction presented here. The group is $G=\langle a_0,a_1,b,c,d\rangle$, with generators given by corresponding initial states in the automaton
\[\begin{fsa}[every node/.style={sloped}]
  \node[state] (b) at (120:1.5) {$b$};
  \node[state] (d) at (0:1.5) {$d$};
  \node[state] (c) at (240:1.5) {$c$};
  \node[state,label=center:$a_0$,minimum size=5.5mm] (a0) at (-3,2) {};
  \node[state,label=center:$a_1$,minimum size=5.5mm] (a10) at (-3,-2) {};
  \node[state,label=center:$a_1$,minimum size=5.5mm] (a11) at (1,-1.732) {};
  \node[state,label=center:$a_1$,minimum size=5.5mm] (e0) at (3.5,-2) {};
  \node[state,label=center:$a_0$,minimum size=5.5mm] (e1) at (3.5,2) {};
  \path (b) edge node[below] {$\Delta1$} (c) edge node[above,pos=0.6] {$(0_0,0_1)$} (a10) edge node[below,pos=0.4] {$(0_1,0_0)$} (a11) edge node[below] {$\Delta0_1$} (a0)
        (c) edge node[below,near end] {$\Delta1$} (d) edge node[above,pos=0.4] {$(0_0,0_1)$} (a10) edge node[below] {$(0_1,0_0)$} (a11) edge node[below,pos=0.6] {$\Delta0_1$} (a0)
        (d) edge node[above,pos=0.6] {$\Delta1$} (b) edge node {$\Delta0_1$} (e0) edge node {$\Delta0_0$} (e1)
        (a0) edge[-implies,double,bend left=5] node[below] {$(0_0,1),(1,0_0)$} (e1)
        (a10) edge[bend right=5] node[below] {$(0_1,1)$} (e0)
        (a11) edge[bend right=5] node[above] {$(1,0_1)$} (e0)
        (e0) edge[loop right] node[sloped=false] {$\Delta1.$} () edge[-implies,double,bend right=20] node[below] {$\Delta1,\Delta0_0$} (e1)
        (e1) edge[bend right=20] node {$\Delta0_1$} (e0);
      \end{fsa}
\]
It acts on an extension of the language $\{0,1\}^\omega$ in which every sequence of consecutive $0$'s alternates as $\dots0_00_1$, ending with $0_1$ in case the next symbol is a $1$. Thus the map $L\to\{0,1\}^\omega$ is almost everywhere $1:1$, and is $2:1$ on sequences ending in $(0_00_1)^\omega$.

Note that the action of $G$ is not equicontinuous, and is even actually expansive.

\subsection{Iterated monodromy groups}\label{ss:img}
Consider a compact metric space $\mathcal M$ and a covering map $f\colon\mathcal M\righttoleftarrow$. The choice of a basepoint $*\in\mathcal M$ yields a \emph{tree of preimages} $\mathcal T_*=\bigsqcup_{n\ge0}f^{-n}(*)$, and an action by monodromy of $\pi_1(\mathcal M,*)$ on $\mathcal T_*$: moving continuously $*$ along a path $\gamma\colon[0,1]\to\mathcal M$ yields a continuous movement $(\mathcal T_{\gamma(t)})_{t\in[0,1]}$ of $\mathcal T_*$, and if the path returns to its starting point then $\mathcal T_*$ returns to itself under a permutation. The \emph{iterated monodromy group} of $f$ is the image of $\pi_1(\mathcal M,*)$ in the permutation group on $\mathcal T_*$.

If one also chooses a \emph{spider}: a choice, for each $\sigma\in f^{-1}(*)$, of a path $\ell_\sigma$ from $*$ to $\sigma$, then the action of $G=\pi_1(\mathcal M,*)$ on $\mathcal T_*$ can be encoded into an (infinite) automaton: its stateset is $G$, its alphabet is $\Sigma=\{\ell_\sigma\mid\sigma\in f^{-1}(*)\}$, and there is a transition from $g\in G$ to $h\in G$ with input $\ell_\sigma$ and output $\ell_\tau$ precisely when $\ell_\sigma\#\widetilde g\sim h\#\ell_\tau$, where $\sim$ denotes homotopy, $\widetilde g$ is the unique lift of $g$ that starts at $\sigma$ and $\#$ denotes path concatenation. Note that $\tau$ is determined as the endpoint of $\widetilde g$.

In case $f$ is expanding, the iterated monodromy group of $f$ is a contracting self-similar group.

An instructive example is that of $\mathcal M=\R/\Z$ with $f(x)=d x$, a degree-$d$ covering. Choose as basepoint $*=0$ with preimages $\{i/d\mid 0\le i<d\}$, and choose for $\ell_{i/d}$ the segment $[0,i/d]$. Then $\pi_1(\mathcal M,*)=\Z$, identifying $n\in\Z$ with the map $t\mapsto n t$; and the automaton has an edge from $n\in\Z$ to $m\in\Z$ with label $(\ell_{i/d},\ell_{j/d})$ whenever $i/d+n/d-j/d=m$. In particular, it has a loop at $1$ with label $(\ell_{(d-1)/d},\ell_{0/d})$ and $d-1$ edges from $1$ to $0$ with labels $(\ell_{i/d},\ell_{(i+1)/d})$ for $i=0,\dots,d-2$:
\begin{equation}\label{eq:adder}\begin{fsa}[baseline=0]
  \node[state,accepting below] (a) at (0,0) {$1$};
  \node[state,accepting below] (e) at (4,0) {$0$};
  \path (a) edge[loop left] node[left] {$(\ell_{(d-1)/d},\ell_{0/d})$} ()
  edge [bend left=20] node[above] {$(\ell_{0/d},\ell_{1/d})$} (e)
  edge [bend right=20] node[below] {$(\ell_{(d-2)/d},\ell_{(d-1)/d})$} (e)
  (e) edge[out=60,in=30,loop] node[right] {$(\ell_{0/d},\ell_{0/d})$} (e)
  edge[out=-30,in=-60,loop] node[right] {$(\ell_{(d-1)/d},\ell_{(d-1)/d})$} (e);
  \node[anchor=center] at (2,0) {$\vdots$};
  \node[anchor=center] at (5.7,0) {$\vdots$};
\end{fsa}\end{equation}
I have not indicated any initial state in this automaton. With initial state $1$, this automaton describes the action of $1\in\Z$; while with initial state $0$ it describes the action of the identity $0\in\Z$. The loop at $1$ represents the ``carry'' to be performed when adding $1$ to a number whose last digit is $d-1$, so the automaton describes the action of $\Z$ by translation on the ``$d$-adics'' $\lim\Z/d^n\Z$, and equivalently arithmetic in base $d$.

The previous example is that of the map $f(z)=z^d$ on $\mathcal M=\C\setminus\{0\}$. Here is another example: the map $f(z)=z^2-1$, with post-critical set $\{0,-1,\infty\}$ so $f$ acts on $\mathcal M=\C\setminus\{0,-1\}$. Choose $*=(1-\sqrt5)/2$ a fixed point of $f$, and $\ell_*$ the constant path and $\ell_{-*}$ an arc of circle in the upper half plane; then $G=\langle a,b\rangle$ with generators $a,b$ respectively corresponding to a path from $*$ to $-1,0$, encircling the puncture counterclockwise, and returning to $*$. The corresponding automaton is
\begin{equation}\label{eq:basilica}
  \begin{fsa}[baseline=0]
  \node[state] (a) at (-2.7,1.3) {$a$};
  \node[state] (b) at (-2.7,-1.3) {$b$};
  \node[state] (e) at (0,0) {$1$};
  \path (a) edge node[above,sloped] {$(\ell_*,\ell_{-*})$} (e) edge[bend left] node {$(\ell_{-*},\ell_*)$} (b)
  (b) edge node[below,sloped] {$(\ell_*,\ell_*)$} (e) edge[bend left] node[left=-1mm] {$(\ell_{-*},\ell_{-*})$} (a)
  (e) edge[loop right] node[above=1mm] {$(\ell_*,\ell_*)$} node[below=1mm] {$(\ell_{-*},\ell_{-*})$} (e);
\end{fsa}
\end{equation}

The path $\ell_{-*}$ is indicated in dotted line, while the generators $a,b$ are in solid and their lifts in dashed lines:

\centerline{\begin{tikzpicture}[>=stealth']
\node[gray] at (0,0) {\includegraphics[width=12cm]{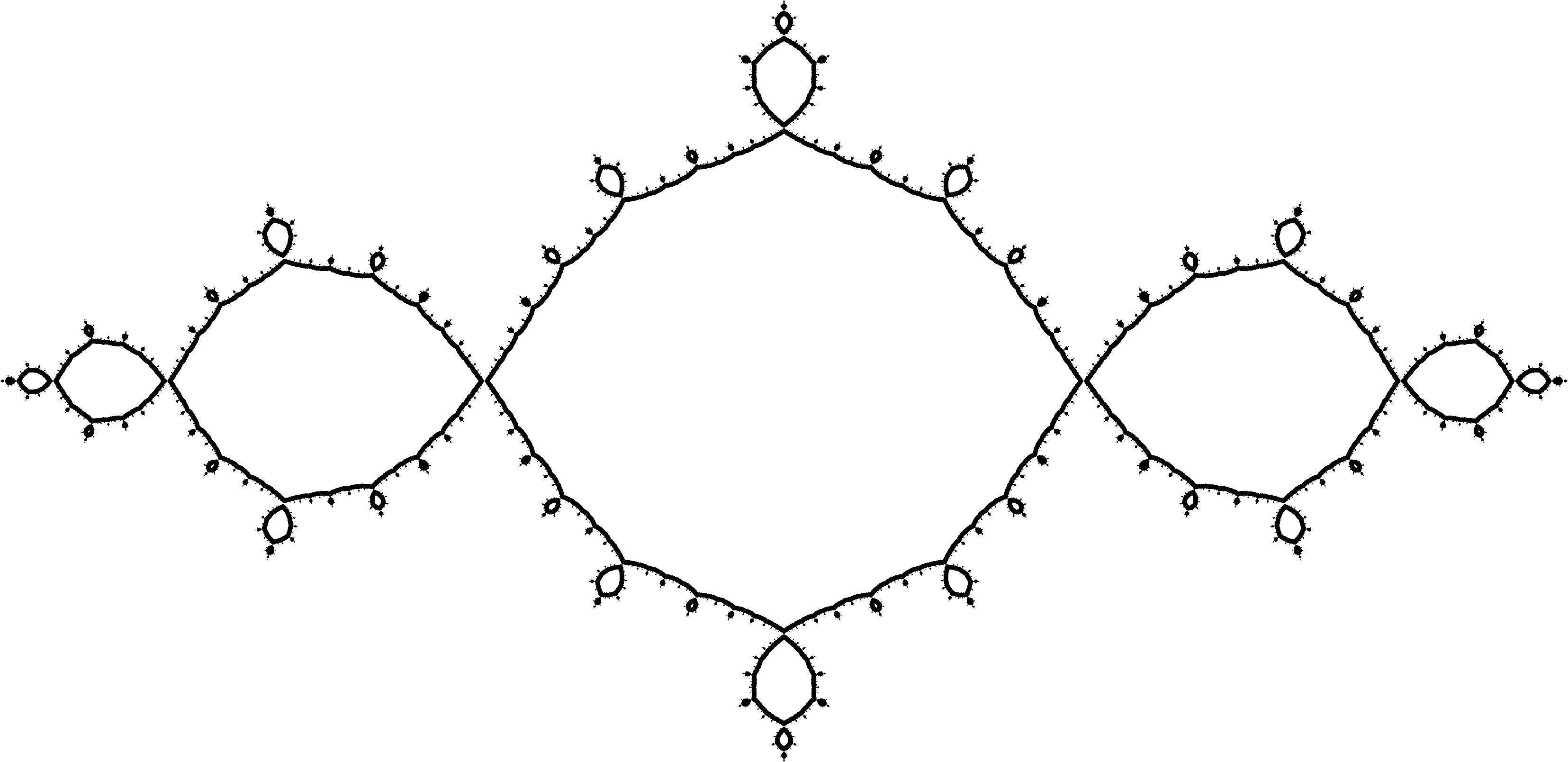}};
\node at (0,0) {$0$};
\node at (-3.6,0) {$-1$};
\node[inner sep=0pt] (x1) at (-2.3,0) {$*$};
\node[inner sep=0pt] (x0) at (2.3,0) {$-*$};
\draw[thick,->] (x1) .. controls (2.5,-4) and (2.5,4) .. node[left] {$a$} (x1);
\draw[thick,->] (x1) .. controls (-5,2.3) and (-5,-2.3) .. node [very near end,below] {$b$} (x1);
\draw[thick,dashed,->] (x1) .. controls (-5.3,2) and (-5.3,-2) .. node [near start,above left=1mm] {$f^{-1}(a)$} (x1);
\draw[thick,dashed,->] (x0) .. controls (5.3,-2) and (5.3,2) .. node [near start,below right=1mm] {$f^{-1}(a)$} (x0);
\draw[thick,dashed,->] (x0) .. controls (1.4,2) and (-1.4,2) .. node [near start,above right=1mm] {$f^{-1}(b)$} (x1);
\draw[thick,dashed,->] (x1) .. controls (-1.4,-2) and (1.4,-2) .. node [near start,below left=1mm] {$f^{-1}(b)$} (x0);
\draw[thick,dotted,->] (x1) -- (x1);
\draw[thick,dotted,->] (x1) .. controls (-1.3,0.3) and  (1,1.6) .. node [below] {$\ell_{-*}$} (x0);
\end{tikzpicture}}

\subsection{Morphic subshifts}\label{ss:morphic}
\newcommand\zero{{\mathbf0}}
\newcommand\one{{\mathbf1}}

This class of examples will be treated in much more detail in~\cite{bartholdi-m:actions2}; here is a brief summary.

Consider a substitution $\phi\colon \mathcal A\to \mathcal A^*$, for a finite alphabet $\mathcal A$, extended by concatenation to a map still written $\phi\colon \mathcal A^*\to \mathcal A^*$. We make the assumption that \emph{$\phi$ is length-increasing}, namely $|\phi^n(w)|\to\infty$ for all non-empty $w\in \mathcal A^*$. (There are fascinating questions about non-length-increasing substitutions, see~\cite{salo:quasiminimal}, to which I will return in another article\texttrademark.) The substitution $\phi$ is \emph{primitive} if there is some $n\in\N$ such that $\phi^n(a)$ contains every letter from $\mathcal A$, for all $a\in\mathcal A$. This is the most often considered case, and implies that $\phi$ is length-increasing, but we do \emph{not} make this assumption. Consider the space $\mathcal A^\Z$ of bi-infinite sequences over $\mathcal A$; it has the topology of a Cantor set. The substitution $\phi$ then also induces a map $\phi\colon \mathcal A^\Z\to \mathcal A^\Z$, such that $\phi(x)_0$ is the first letter of $\phi(x_0)$. There is a natural, continuous action of $\Z$ on $\mathcal A^\Z$, by shifting; we denote by $S\colon \mathcal A^\Z\to \mathcal A^\Z$ the generator of this action, defined by $S(x)_n=x_{n+1}$.

The \emph{substitutional subshift} associated with $\phi$ is defined as
\[\Omega_\phi=\bigcap_{n\ge0}\Z\cdot\phi^n(\mathcal A^\Z).\]
In other words, every element of $\Omega_\phi$ may be \emph{desubstituted} arbitrarily many times, after appropriate shifting. By the very definition, it is a closed, $\Z$-invariant subset of $\mathcal A^\Z$.

Consider now another finite alphabet $\mathcal B$, and a map $\psi\colon \mathcal A\to \mathcal B$. A \emph{morphic subshift} is a subshift $\Omega_{\phi,\psi}\subseteq \mathcal B^\Z$ that is the image of a substitutional subshift $\Omega_\phi\subseteq \mathcal A^\Z$ under $\psi$.

As a simple example (more appear below), consider the \emph{Golay-Rudin-Shapiro sequence}: $\mathcal A=\{a,b,c,d\}$ with
\[\phi(a)=a b,\quad\phi(b)=a c,\quad\phi(c)=d b,\quad\phi(d)=d c\]
and $\mathcal B=\{-,+\}$ with $\psi(a)=\psi(b)={+}$ and $\psi(c)=\psi(d)={-}$. The Golay-Rudin-Shapiro sequence~\cite{oeis}*{A020985} is the $\psi$-image of the fixed point $\phi^\infty(a)$,
\[+ + + - + + - + + + + - - - + - + + + - + + - + - - - + + + - + + + + - + + - +\dots\]
and $\Omega_{\phi,\psi}$ is the closure of its $\Z$-orbit. For more details see~\S\ref{ss:grs}

\begin{prop}\label{prop:morphic}
  Let $\Omega_{\phi,\psi}\subseteq \mathcal B^\Z$ be a morphic subshift. Then there is an $\omega$-regular language $L$ and a regular equivalence relation $\approx$ on $L$ such that $L/{\approx}$ is homeomorphic to $\Omega_{\phi,\psi}$, and such that the translation action of $\Z$ on $\Omega_{\phi,\psi}$ lifts to an automatic action on $L$.

  Furthermore, if $\phi$ is a primitive substitution and $\psi=1$, then the automatic presentation may be assumed to be injective (namely, the relation $\approx$ is the identity).
\end{prop}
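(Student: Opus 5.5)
The plan is to encode each point $x\in\Omega_\phi$ by its \emph{desubstitution sequence}, and then push everything forward along $\psi$. Since $x\in\Z\cdot\phi^n(\mathcal A^\Z)$ for all $n$, there are $x^{(0)}=x,x^{(1)},x^{(2)},\dots$ with $x^{(n)}\in\Omega_\phi$ and $x^{(n)}=S^{k_n}\phi(x^{(n+1)})$. Here $0\le k_n<|\phi(x^{(n+1)}_0)|$ is the position of coordinate $0$ of $x^{(n)}$ inside the block $\phi(x^{(n+1)}_0)$.

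I would take as alphabet the finite set $\Sigma=\{(a,k)\mid a\in\mathcal A,0\le k<|\phi(a)|\}$. A point is coded by the $\omega$-word $(a_0,k_0)(a_1,k_1)\cdots$ with $a_n=x^{(n)}_0$, subject to the local constraint that the $k_n$-th letter of $\phi(a_{n+1})$ equals $a_n$. This constraint defines a safety automaton, so the language $L$ is closed and $\omega$-regular.

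Next I would show that the natural map $\pi\colon L\to\Omega_\phi$ is well defined, continuous and surjective. Surjectivity is the existence of desubstitutions just described, with a compactness argument to choose them coherently. For well-definedness, a word determines the central letters $a_n$ and offsets. Iterating, it determines $x$ on the window $[-\ell_n,r_n]$, where $\ell_n,r_n$ count the letters of $\phi^n(a_n)$ to the left and right of the tracked position. The length-increasing hypothesis gives $\ell_n+r_n\to\infty$, but not necessarily both tend to infinity. When one side stays bounded, the point is only determined on a half-line, and the rest of $x$ must be supplied.

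I would handle this by enlarging the encoding. Keep the desubstitution word, and in the one-sided case append the data of the other side: a second desubstitution sequence read from the boundary. I would interleave the two sequences letter by letter, which is possible because the ``sides stay bounded'' condition is itself $\omega$-regular, namely eventually $k_n=0$ or eventually $k_n=|\phi(a_{n+1})|-1$.

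The relation $\approx$ is then ``defines the same bi-infinite sequence after applying $\psi$''. I would express it as the set of pairs whose decoded windows agree on every finite window. This is checked by a Büchi automaton that reads both codes in parallel and tracks, for a bounded amount of data, how the two nested block structures overlap near the origin. The finiteness of this data is the crux of the whole proof: we need only finitely many possible relative offsets that stay relevant. I would try to establish it by a recognizability-type bound, and failing that by a direct argument that blocks further from the origin are independent.

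The shift $S$ acts by an ``addition with carry'' transducer, exactly as the odometer in the introduction. At each level we increment $k_n$. If $k_n+1<|\phi(a_{n+1})|$ we stop and copy the remainder, except that the letters $a_m$ for $m<n$ change to the first letters of the new subblocks. Otherwise we reset $k_n=0$ and carry to level $n+1$, moving to the next letter of the $(n+1)$-level block. An infinite carry corresponds to the boundary of arbitrarily large blocks, and is resolved nondeterministically by guessing the other side's letters, checked by the safety constraint. Since the transducer is nondeterministic but reads letters in order, the graph of $S$ is $\omega$-regular, and is compatible with $\approx$. Continuity of $\pi$ and compactness then give the homeomorphism $L/{\approx}\cong\Omega_{\phi,\psi}$.

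For the primitive case with $\psi=1$, I would invoke Mossé's recognizability theorem. It says that every point of $\Omega_\phi$ has a unique desubstitution, except along finitely many orbits of ``two-sided fixed'' points. On those exceptional points I would restrict $L$ to a single canonical representative, for instance the lexicographically minimal one, which is an $\omega$-regular selection. This makes $\pi$ injective, so $\approx$ is the identity.

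I expect the main obstacle to be the non-primitive, non-recognizable case: proving that $\approx$ is $\omega$-regular, which rests on the finiteness of the overlap data described above.
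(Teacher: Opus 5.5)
There are two genuine gaps, and the first makes one of your stated steps false.

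First, your fix for one-sided hierarchies appends the ``other side'' only when the offsets are eventually $0$ or eventually maximal. The complementary (two-sided) condition is $\omega$-regular but not closed, so your $L$ is not closed in $\Sigma^\omega$, and the compactness you invoke at the end is unavailable. The conclusion in fact fails for the Fibonacci substitution. There recognizability makes your coding injective, so $L/{\approx}=L$. Every prefix of the bare code $(\mathbf0,0)^\omega$ of $\phi^{2\infty}(\mathbf0\underline{\mathbf0})$ extends to a two-sided code, while the bare code itself is not in $L$. So $L$ is not compact and cannot be homeomorphic to $\Omega_\phi$.

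The paper avoids any case split by \emph{collaring} uniformly. Its alphabet is $\{(\ell\underline ar,i)\}$, recording at every level the two neighbours of the tracked letter, with a transition $(\ell\underline ar,i)\to(m\underline bs,j)$ when $\ell ar$ occurs in $\phi(mbs)$ with $a$ at offset $i$ inside $\phi(b)$. The level-$n$ window $\phi^n(\ell_na_nr_n)$ then reaches at least $\min_c|\phi^n(c)|\to\infty$ letters on each side of the origin. So $L$ is a safety language, hence compact, $\pi$ is continuous, and an infinite carry of $S$ simply moves the tracked letter into its right collar, so only a new collar is guessed. Recording your ``other side'' data at every level, in all cases, leads exactly to this encoding.

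Relatedly, Moss\'e's theorem gives, for aperiodic primitive $\phi$, a unique centred desubstitution for \emph{every} point; there are no exceptional orbits. The non-uniqueness you expect at ``two-sided fixed'' points is only non-uniqueness of the uncollared code, which forgets a half-line. Once the code records both sides, no lexicographic selection is needed.

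Second, the $\omega$-regularity of $\approx$, which you correctly call the crux, is not proved. Nothing in your sketch bounds the relative position of two hierarchies. For non-recognizable $\phi$ these can be very different: in \S\ref{ss:nonminimal} the single point $\mathbf0^\Z$ has the whole Cantor set $\{1,3\}^\omega$ of codes. For $\psi\neq1$ the two hierarchies even belong to different points of $\Omega_\phi$.

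The paper does not track overlaps at all. It takes $\approx$ to be the letterwise relation $\psi(a_n)=\psi(b_n)$ on central letters at every level, which is trivially $\omega$-regular. Be aware, though, that this letterwise relation is not the kernel of $\psi\circ\pi$ in general. For Thue--Morse $\tau\colon0\mapsto01,1\mapsto10$ with $\psi=1$, the collared codes of $\tau^{2\infty}(0\underline0)$ and $\tau^{2\infty}(1\underline0)$ have central letter $0$ and offset $0$ at every level; they differ only in their left collars, yet the two points are distinct. So your instinct that a genuine alignment argument is needed is sound. But you do not supply one, and as it stands the proposal does not produce the regular equivalence relation the statement requires.
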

This is very close to Vershik's theory of ``adic'' transformations~\cite{vershik-livshits:adic}; though the results are usually stated and proven in the measurable setting rather than the topological one. There is also a strong connection to Dumont-Thomas numeration systems and Frougny's automatic encoding of numbers~\cites{dumont-thomas:numeration,frougny:rnfa}. See also~\cite{durand-host-skau:substitutional}.

Here is a failed, but simpler, attempt at proving the proposition. Consider the case of a substitutional subshift $\Omega_\phi$, and define the following language $L$. Its alphabet is
\[\Sigma'=\{(a,i)\mid a\in \mathcal A,0\le i<|\phi(a)|\}.\]
$L$ is accepted by the automaton with stateset $\mathcal A$, all states being initial, and with an edge from $a\in \mathcal A$ to $b\in \mathcal A$ with label $(b,i)$ whenever $a$ is the $i$th letter (counted from $0$) in $\phi(b)$.

Consider $x=x_0x_1\dots\in L$, and the following sequence of pairs $(w,i)$ where $w$ is a word over $\mathcal A$ and $i\in\{1,\dots,|w|\}$ marks a letter of $w$. Write $x_k=(a_{k+1},i_k)$ and let $a_0$ be the $i_0$th letter of $\phi(a_1)$; then start with the word $w_0=a_0$, with the mark at position $0$. The next word is $w_1=\phi(a_1)$, which contains $a_0$ at position $i_0$. The next word is $w_2=\phi^2(a_2)$, and note that it contains $w_1$ as a factor (subword of consecutive letters) since the $i_1$th letter of $\phi(a_2)$ is $a_1$. Put the mark at position $i_0$ within that factor. At all steps, each $w_k$ is a factor of $w_{k+1}$, preserving the marking. Let $\overline x\in \mathcal A^\Z$ be the limit of these words $w_k$, viewed as a bi-infinite word centered at the mark. The map $x\mapsto\overline x$ is the desired bijection $L\to\Omega_\phi$.

There are a few problems with this construction, in that $\overline x$ could be only one-sided infinite; and the map $x\mapsto\overline x$ could fail to be injective (this is the problem of \emph{recognizability}, see~\cites{mosse:recognizability,bezuglyi+:aperiodic,berthe-s-t-y:recognizability}), or surjective. We proceed now with details.

\begin{proof}
  We begin with the case of a substitutional subshift $\Omega_\phi$, for a substitution $\phi$. Let $M$ be the maximal length of $\phi(a)$ over all $a\in \mathcal A$. Define the following language $L$, with alphabet
  \[\Sigma=\{(\ell\underline a r,i)\mid\ell,a,r\in \mathcal A,0\le i<M\}.\]
  $L$ is accepted by the automaton $\mathscr A$ with stateset $\Sigma$, all states being initial, and with an edge from $(\ell\underline a r,i)$ to $(m\underline b s,j)$ whenever $i<|\phi(b)|$ and $\ell a r$ is a factor of $\phi(m b s)$, with the `$a$' occurring at position $i$ (counted from $0$) relative to the beginning of $\phi(b)$ in $\phi(m b s)$. The label of the edge is the source vertex $(\ell\underline a r,i)$.

  Consider $x=x_0x_1\dots\in L$, and the following sequence of pairs $(w,i)$ where $w$ is a word over $\mathcal A$ and $i\in\{1,\dots,|w|\}$ marks a letter of $w$. Write $x_k=(\ell_k\underline{a_k}r_k,i_k)$ for all $k$, and start with the word $w_0=\ell_0\underline{a_0}r_0$, with the mark at position $1$, namely on the `$a_0$'. The next word is $w_1=\phi(\ell_1 a_1 r_1)$, which contains $w_0$ at position $i_0$. The next word is $w_2=\phi^2(\ell_2 a_2 r_2)$. At all steps, each $w_k$ is a factor of $w_{k+1}$, preserving the marking. Let $y$ be the limit of these words $w_k$, centered at the mark, and note that it is a bi-infinite word because $\phi$ is length-increasing. It is furthermore an infinitely iterated application of $\phi$, so belongs to $\Omega_\phi$. We have constructed a map $\pi\colon x\mapsto y$ from $L$ to $\Omega_\phi$.

  Conversely, given $y\in\Omega_\phi$, write it as $S^i(z)$ for some $i\in\Z$ and $z\in\Omega_\phi$. We may assume furthermore $0\le i<|\phi(z_0)|$. Construct a sequence $x\in\Sigma^\omega$ whose first letter $x_0$ is $(y_{-1}\underline{y_0}y_1,i)$, and whose remainder is constructed by applying the same procedure to $z$. We have constructed a preimage $x$ of $y$, so the map $\pi\colon L\to\Omega_\phi$ is surjective.

  We consider now a letter coding $\psi\colon \mathcal A\to \mathcal B$. In $L\times L$, consider the pairs of words $\dots((\ell_n\underline{a_n}r_n,i_n),(m_n\underline{b_n}s_n,j_n))\dots$ such that $\psi(a_n)=\psi(b_n)$ for all $n$. This defines an automatic relation $\approx$, and $x\approx x'$ if and only if $\psi(\pi(x))=\psi(\pi(y))$; so we have encoded $\Omega_{\phi,\psi}$ as $L/{\approx}$.

  If $\psi=1$ and $\phi$ is primitive, we quote~\cites{mosse:recognizability,bezuglyi+:aperiodic,berthe-s-t-y:recognizability} to assert that the substitution $\phi$ is \emph{recognizable}, so that the encoding $\pi$ is injective.

  We turn to the relation on $L\times L$ lifting the generator $S$ of $\Omega_{\phi,\psi}$. Its automaton $\mathscr S$ has alphabet $\Sigma^2$, stateset $\Sigma\sqcup \Sigma^2$; it contains a copy of $\mathscr A$ which accepts the diagonal of $L$ on the stateset $\Sigma$, and additional edges from $((\ell\underline a r,i),(a\underline r c,i+1))$ to $(m\underline b s,j)$ whenever $\ell a r c$ is a factor at position $i$ of $\phi(m b s)$, and from $((\ell\underline a r,|\phi(b)|-1),(a\underline r c,0))$ to $((m\underline b s,j),(b\underline{s}d,k))$ whenever $\ell a r c$ is a factor as position $|\phi(b)|-1$ of $\phi(m b s d)$; as before, the label of an edge is its source vertex. All states in $\Sigma^2$ are initial.

  The effect of $\mathscr S$ is to change the first symbol, if possible, by increasing the value of $i$. If its value increases beyond the length of the $\phi$-block that the mark belongs to, then the value of $i$ is reset to $0$ and the next symbol is incremented, with $\mathscr S$ continuing on the remainder of the sequence.
\end{proof}

Let us examine this construction in more detail for some examples.
\subsubsection{The Fibonacci subshift}\label{ss:fibo}
An instructive example of substitutional subshift is the following: consider the alphabet $\mathcal A=\{\zero,\one\}$ and the substitution $\phi\colon\zero\mapsto\zero\one,\one\mapsto\zero$.

The shift space $\Omega_\phi$ is the topological closure of the $\Z$-orbit of the fixed point $\phi^{2\infty}(\zero\underline\zero)=\dots\zero\one\zero\zero\one\zero\one\zero\underline\zero\one\zero\zero\one\zero\one\zero\dots$. It suffices, to encode sequences, to mark the previous letter, so we introduce the alphabet
\[a={}_\one\zero,\quad b={}_\zero\zero,\quad c={}_\zero\one,\]
on which the substition $\phi$ extends by $a\mapsto bc,b\mapsto ac,c\mapsto a$. The substitutional subshifts on $\{\zero,\one\}$ and $\{a,b,c\}$ are isomorphic, under the obvious map $a,b\mapsto\zero,c\mapsto\one$. We then select $\Sigma=\{1,2,3\}$ with
\[1=(a,0),\quad 2=(b,0),\quad 3=(c,1).\]
The language $L$ is accepted by the automaton
\[\begin{fsa}
    \node[state,initial above] (a0) at (0,0) {};
    \node[state,initial above] (b0) at (3,0) {};
    \node[state,initial above] (c1) at (-3,0) {};
    \draw (a0) edge[bend left=15] node {$1$} (b0) (b0) edge[bend left=10] node[above] {$2$} (a0)
    (a0) edge[bend left=10] node[above] {$1$} (c1)
    (c1) edge[bend right=25] node {$3$} (b0) (c1) edge[bend left=15] node {$3$} (a0);
  \end{fsa}
\]
as given by the proof of Proposition~\ref{prop:morphic}, after one removes the states that do not have infinite continuations. All states are residual. For example, $(12)^\omega$ encodes the fixed point $\xi=\phi^{2\infty}(\zero\underline\zero)$, while $(21)^\omega$ encodes the fixed point $\phi^{2\infty}(\one\underline\zero)$. The encoding of $\Omega_\phi$ by $L$ is bijective, so from now on we identify $\Omega_\phi$ and $L$. Note that the automaton for $L$ may be determinized as
\[\begin{fsa}
    \node[state,initial above] (s) at (0,1) {};
    \node[state] (a) at (0,0) {};
    \node[state] (b) at (3,0) {};
    \node[state] (c) at (-3,0) {};
    \draw (a) edge [bend left=15] node {$3$} (b);
    \draw (b) edge [bend left=10] node[above] {$1$} (a);
    \draw (b) edge [bend left=25] node[above] {$2$} (c);
    \draw (a) edge [bend left=10] node[above] {$2$} (c);
    \draw (c) edge [bend left=15] node[above] {$1$} (a);
    \draw (s) edge node[right] {$1$} (a);
    \draw (s) edge [bend right=10] node[above] {$2$} (c);
    \draw (s) edge [bend left=10] node[above] {$3$} (b);
  \end{fsa}\]
but we prefer to work with non-deterministic automata.

Projecting to the second coordinate $1,2\mapsto 0,3\mapsto 1$ in the alphabet yields a map $\Omega_\phi\to\{0,1\}^\omega$; and the point $S^n(\xi)$ projects, for $n\ge0$, to a sequence $e_0e_1\dots$ with finitely many $1$'s; and $n=\sum_{i\ge0} e_i F_{i+2}$ for $(F_i)$ the Fibonacci numbers $F_0=0,F_1=F_2=1,F_3=2,\dots$. Thus the orbit of $\xi$ closely corresponds to counting in base Fibonacci. Negative numbers $n$ correspond to sequences that are ultimately $(01)^\omega$, with $S^{-1}(\xi)$ projecting to $(01)^\omega$. The action of $S$ is automatic on $L$, given by the following automaton $\mathscr S$:
\[\mathscr S = \begin{fsa}[baseline=0]
    \node[state] (a0) at (0,1) {};
    \node[state] (b0) at (0,-1) {};
    \node[state] (c1) at (-3,0) {};
    \draw (a0) edge[bend left=15] node[right] {\small $\Delta1$} (b0) (b0) edge[bend left=10] node[left] {\small $\Delta2$} (a0)
    (a0) edge[bend left=15] node[above,sloped] {\small $\Delta1$} (c1)
    (c1) edge node[below,sloped] {\small $\Delta3$} (b0) (c1) edge[bend left=15] node[above,sloped] {\small $\Delta3$} (a0);
    \node[state,initial above] (x) at (3,0) {};
    \node[state,initial above] (y) at (6,0) {};
    \draw (x) edge [bend left=15] node[above] {\small $(1,2)$} (y);
    \draw (y) edge [bend left=15] node[below] {\small $(3,1)$} (x);
    \draw (x) edge node[below,sloped] {\small $(1,3)$} (b0) edge node[above,sloped] {\small $(2,3)$} (a0);
  \end{fsa}\]
in which all states are residual.

We could have started with a counting system, rather than a subshift! There is a counting system in which both positive and negative numbers are encoded by sequences over $\{0,1\}$ with finitely many $1$'s and no two consecutive $1$'s, the \emph{nega-Zeckendorf representation}: every $x\in\Z$ may uniquely be written, with these constraints, as $x=\sum_{n\ge0} x_n (-1)^n F_{n+1}$. Consider thus the language $L'\subset\{0,1\}^\omega$ consisting of sequences without two consecutive $1$'s:
\[\begin{fsa}
    \node[state,initial above] (a) at (0,0) {};
    \node[state,initial above] (b) at (3,0) {};
    \draw (a) edge [bend left=15] node {$0$} (b);
    \draw (b) edge [bend left=10] node[above] {$1$} (a);
    \draw (a) edge [loop left,looseness=25] node[left] {$0$} (a);
  \end{fsa}\]
and the automaton $\mathscr S'$ definining a bijection of $L'$, here deterministic:
\[\mathscr S' = \begin{fsa}[baseline=0]
    \node[state] (Tb) at (7,1) {};
    \node[state,initial right] (Ta) at (7,-1) {};
    \node[state] (Tc) at (5,-1) {};
    \node[state,initial above] (Td) at (5,1) {};
    \node[state] (1a) at (0,0) {};
    \node[state] (1b) at (3,0) {};
    \draw (Tb) edge [bend left=15] node {\small $(0,1)$} (Ta);
    \draw (Ta) edge [bend left=15] node {\small $(1,0)$} (Tb);
    \draw (Tb) edge node {$\Delta0$} (Td);
    \draw (Ta) edge node {$\Delta0$} (Tc);
    \draw (Tc) edge node[below,sloped] {\small $(1,0)$} (1b);
    \draw (Td) edge node[below,sloped] {\small $(0,1)$} (1b);
    \draw (1a) edge [bend left=15] node {\small $\Delta1$} (1b);
    \draw (1b) edge [bend left=15] node {\small $\Delta0$} (1a);
    \draw (1a) edge [loop below,looseness=25] node[below] {\small $\Delta0$} (1a);
  \end{fsa}
\]
in which all states are residual. It defines an automatic action of $\Z$, and implements the ``$+1$'' operation in nega-Zeckendorf representation. For example, $8=101010^\omega$ and $9=01010010^\omega$, and $(1,0)(0,1)(1,0)(0,1)(1,0)(0,0)(0,1)(0,0)^\omega$ is accepted. In fact, the two initial states define $+1$ and the two non-initial non-identity states define $-1$, and represent, after $n$ symbols have been read, a ``carry'' or ``borrow'' of $\pm F_{n+2}$ to be added.

For future reference, we include another automaton, defining the same transformation but in which every state knows the input and output symbol: the initial states are indicated as $t$ for the generator $S$, as $u$ for $S^{-1}$, and as $1$ for the identity:
\begin{equation}\label{eq:negazeckendorf}
  \begin{fsa}[baseline=0]
    \node[state] (t10) at (0,1) {$t$};
    \node[state] (u01) at (0,-1) {$u$};
    \node[state] (u00) at (2.5,1) {$u$};
    \node[state] (t00) at (2.5,-1) {$t$};
    \node[state] (t01) at (5,1) {$t$};
    \node[state] (u10) at (5,-1) {$u$};
    \node[state] (100) at (7,0) {$1$};
    \node[state] (111) at (9.5,0) {$1$};
    \draw (t10) edge [bend left=15] node {\small $(1,0)$} (u01)
          (u01) edge [bend left=15] node {\small $(0,1)$} (t10)
          (t10) edge node {\small $(1,0)$} (u00)
          (u01) edge node {\small $(1,0)$} (t00)
          (u00) edge node {\small $\Delta0$} (t01)
          (t00) edge node {\small $\Delta0$} (u10)
          (t01) edge node[above,sloped] {\small $(0,1)$} (100)
          (u10) edge node[above,sloped] {\small $(1,0)$} (100)
          (100) edge[loop above] node[above] {\small $\Delta0$} ()
          (100) edge [bend left=15] node {\small $\Delta0$} (111)
          (111) edge [bend left=15] node {\small $\Delta1$} (100);
  \end{fsa}
\end{equation}

In fact, these two dynamical systems are conjugate: consider the bijection between $L$ and $L'$ given by the following automaton $\mathscr T$:
\[\begin{fsa}
    \node[state] (a) at (0,4) {};
    \node[state,initial above] (b) at (3,4) {};
    \node[state] (c) at (6,4) {};
    \node[state] (d) at (9,4) {};
    \node[state] (e) at (3,2) {};
    \node[state] (f) at (6,2) {};
    \node[state] (g) at (9,2) {};
    \node[state,initial left] (h) at (3,0) {};
    \node[state] (i) at (6,0) {};
    \node[state] (j) at (9,0) {};
    \draw (a) edge[bend left=15] node {\small $(1,0)$} (b);
    \draw (b) edge[bend left=15] node {\small $(2,1)$} (a);
    \draw (b) edge node {\small $(3,0)$} (c);
    \draw (b) edge node {\small $(3,1)$} (e);
    \draw (c) edge[bend left=15] node {\small $\Delta1$} (d);
    \draw (d) edge[bend left=15] node {\small $(2,0)$} (c);
    \draw (d) edge[bend left=15] node {\small $(3,0)$} (g);
    \draw (e) edge node {\small $(2,0)$} (h);
    \draw (f) edge node {\small $(2,0)$} (c);
    \draw (g) edge[bend left=15] node {\small $(1,0)$} (d);
    \draw (g) edge node {\small $(2,0)$} (j);
    \draw (h) edge[bend left=15] node {\small $(1,0)$} (i);
    \draw (i) edge[bend left=15] node {\small $(2,0)$} (h);
    \draw (i) edge node {\small $(3,0)$} (f);
    \draw (i) edge[bend left=15] node {\small $(3,0)$} (j);
    \draw (j) edge[bend left=15] node {\small $\Delta1$} (i);
  \end{fsa}\]
It is easy to check that $\mathscr T$ defines indeed a bijection $L\to L'$, and $\mathscr S'\circ\mathscr T=\mathscr T\circ\mathscr S$.

\subsubsection{The Golay-Rudin-Shapiro subshift}\label{ss:grs}
The \emph{Golay-Rudin-Shapiro sequence} is the right-infinite word $w=w_0w_1\dots$ with all $w_n\in\{\pm1\}$ defined as follows: write $n=\sum e_i2^i$ in base $2$, and set $w_n=(-1)^{\sum e_i e_{i+1}}$; it computes thus the parity of the number of consecutive $1$'s in the base-$2$ expansion of $n$. Set $\mathcal A=\{a,b,c,d\}$ and define $\phi\colon\mathcal A\to\mathcal A^2$ by
\[\phi(a)=a b,\quad\phi(b)=a c,\quad\phi(c)=d b,\quad\phi(d)=d c;\]
set then $\mathcal B=\{-1,+1\}$ and define $\psi\colon\mathcal A\to\mathcal B$ by
\[\psi(a)=\psi(b)=+1,\quad\psi(c)=\psi(d)=-1.\]
It is easy to check $w=\psi(\phi^{2\infty}(a))$. We denote by $\Omega_{\phi,\psi}$ the corresponding morphic subshift; it is the closure of the $\Z$-orbit of $\psi(\phi^{2\infty}(c\underline a))$, or of any other starting seed $c\underline d$, $b\underline a$, $b\underline d$.

Consider the alphabet $\Sigma=\{1=(a,0),2=(b,1),3=(c,1),4=(d,0)\}$ and the language $L\subset\Sigma^\omega$ accepted by the automaton
\[\begin{fsa}
    \node[state,initial above] (a0) at (0,0) {};
    \node[state,initial above] (b1) at (3,0) {};
    \node[state,initial above] (c1) at (6,0) {};
    \node[state,initial above] (d0) at (9,0) {};
    \draw (a0) edge[loop left] node[left] {$1$} () edge[bend left=15] node {$1$} (b1);
    \draw (b1) edge[bend left=15] node {$2$} (a0) edge[bend left=15] node {$2$} (c1);
    \draw (c1) edge[bend left=15] node {$3$} (b1) edge[bend left=15] node {$3$} (d0);
    \draw (d0) edge[loop right] node[right] {$4$.} () edge[bend left=15] node {$4$} (c1);
  \end{fsa}
\]
The language $L$ is in bijection with $\Omega_\phi$, so there is a surjective map $L\twoheadrightarrow\Omega_{\phi,\psi}$ with kernel
\[\begin{fsa}
    \node[state,initial above] (e) at (0,0) {};
    \draw (e) edge[loop right,looseness=25] node[right] {$(\{1,2\}\times\{1,2\})\cup(\{3,4\}\times\{3,4\})$.} (e);
  \end{fsa}
\]
Note that no collaring is necessary in this case. The shift $S$ on $\Omega_\phi$, and therefore on $\Omega_{\phi,\psi}$, is represented by the following automaton $\mathscr S$:
\[\begin{fsa}
    \node[state] (a0) at (0,0) {};
    \node[state] (b1) at (3,0) {};
    \node[state] (c1) at (6,0) {};
    \node[state] (d0) at (9,0) {};
    \draw (a0) edge[loop left] node[left] {\small $\Delta1$} (a0) edge[bend left=15] node {\small $\Delta1$} (b1);
    \draw (b1) edge[bend left=15] node {\small $\Delta2$} (a0) edge[bend left=15] node {\small $\Delta2$} (c1);
    \draw (c1) edge[bend left=15] node {\small $\Delta3$} (b1) edge[bend left=15] node {\small $\Delta3$} (d0);
    \draw (d0) edge[loop right] node[right] {\small $\Delta4$} (d0) edge[bend left=15] node {\small $\Delta4$} (c1);
    \node[state,initial above] (a0s) at (0,2) {};
    \node[state,initial above] (b1s) at (3,2) {};
    \node[state,initial above] (c1s) at (6,2) {};
    \node[state,initial above] (d0s) at (9,2) {};
    \draw (a0s) edge[loop right] node {$(2,1)$} () edge node[right] {$(1,2)$} (a0);
    \draw (b1s) edge[loop right] node {$(2,4)$} () edge node[right] {$(1,3)$} (b1);
    \draw (c1s) edge[loop right] node {$(3,1)$} () edge node[right] {$(4,2)$} (c1);
    \draw (d0s) edge[loop right] node {$(3,4)$} () edge node[right] {$(4,3)$} (d0);
  \end{fsa}
\]

\subsubsection{A non-minimal example}\label{ss:nonminimal}
One of my motivations for the automaton formalism, compared to existing tools, is the possibility to encode non-minimal subshifts, and non-primitive substitutions. Consider for example $\mathcal A=\{\zero,\one\}$ and $\phi\colon\zero\mapsto\zero\zero,\one\mapsto\one\zero\one$. It appears for example in~\cite{salo:quasiminimal}*{Example~4}. I am grateful to Ruiwen Dong for help in understanding this example.

One has $\phi^\infty(\zero\underline\zero)=\zero^\Z$ and $\phi^\infty(\zero\underline\one)=\dots\zero\zero\underline\one\zero\one\zero^2\one\zero\one\zero^4\one\zero\one\dots$. Note that $\Omega_\phi$ is \emph{not} the set of words in $\mathcal A^\Z$ all of whose factors are factors of $\phi^n(a)$ for some $a\in A,n\in\N$, as is the case for primitive substitutions. Indeed $\phi^\infty(\one\underline\one)$ belongs to $\Omega_\phi$, but it has $\one\one$ as a factor, while $\phi^\infty(\one)$ does not.

We first enrich the alphabet $\mathcal A$ by ``collaring'': decorating each $\zero$ symbol with the previous and next symbol. We then set $\Sigma=\{1,2,3,4,5,6,7,8,9\}$ with
\begin{gather*}
  1=({}_\zero\zero_\zero,0),\quad 2=({}_\one\zero_\zero,0),\quad 3=({}_\zero\zero_\zero,1),\quad 4=({}_\zero\zero_\one,1),\\
  5=({}_\zero\one_\zero,0),\quad 6=({}_\one\one_\zero,0),\quad 7=({}_\one\zero_\one,1),\quad 8=({}_\zero\one_\zero,2),\quad 9=({}_\zero\one_\one,2).
\end{gather*}
Consider the language $L\subset\Sigma^\omega$ accepted by the automaton
\[\begin{fsa}
    \node[state,initial above] (zzz) at (0,0) {};
    \node[state,initial below] (ozz) at (2.5,-1) {};
    \node[state,initial above] (zzo) at (2.5,1) {};
    \node[state,initial above] (ozo) at (5,0) {};
    \node[state,initial above] (zoz) at (7.5,0) {};
    \node[state,initial below] (ooz) at (10,-1) {};
    \node[state,initial above] (zoo) at (10,1) {};
    \draw (zzz) edge[loop left] node[left] {$1,3$} () edge node {$1$} (zzo) edge node {$3$} (ozz);
    \draw (ozz) edge[loop above] node[above] {$2$} () edge node {$2$} (ozo);
    \draw (zzo) edge[loop below] node[below] {$4$} () edge node {$4$} (ozo);
    \draw (ozo) edge node {$7$} (zoz) edge[bend right=20] node[below,near start] {$7$} (ooz) edge[bend left=20] node[above,near start] {$7$} (zoo);
    \draw (zoz) edge[loop below] node {$5,8$} () edge node[below right] {$5$} (zoo) edge node[above right] {$8$} (ooz);
    \draw (ooz) edge[loop right] node[right] {$6$} ();
    \draw (zoo) edge[loop right] node[right] {$9$} ();
  \end{fsa}
\]
It contains multiple sublanguages, for example $\{5,8\}^*(59^\omega+86^\omega)$ in bijection with the $\Z$-orbit of $\phi^\infty(\one\underline\one)$. The fixed point $\zero^\Z$ is encoded by $\{1,3\}^\omega\cong\Z_2$; this is the only point of $\Omega_\phi$ that admits more than one encoding; so the equivalence relation
\[\begin{fsa}[every edge/.style={sloped,draw}]
    \node[state,initial above] (zzz) at (0,0) {};
    \node[state,initial below] (ozz) at (2.5,-1) {};
    \node[state,initial above] (zzo) at (2.5,1) {};
    \node[state,initial above] (ozo) at (5,0) {};
    \node[state,initial above] (zoz) at (7.5,0) {};
    \node[state,initial below] (ooz) at (10,-1) {};
    \node[state,initial above] (zoo) at (10,1) {};
    \draw[every edge/.style={draw}] (zzz) edge[loop left] node[left] {\small $\{1,3\}\times\{1,3\}$} ();
    \draw (zzz) edge node {\small $\Delta1$} (zzo) edge node[below left] {\small $\Delta3$} (ozz);
    \draw (ozz) edge[loop above] node[above] {\small $\Delta2$} () edge node[below right] {\small $\Delta2$} (ozo);
    \draw (zzo) edge[loop below] node[below left] {\small $\Delta4$} () edge node {\small $\Delta4$} (ozo);
    \draw (ozo) edge node {\small $\Delta7$} (zoz) edge[bend right=25] node[below,near start] {\small $\Delta7$} (ooz) edge[bend left=25] node[above,near start] {\small $\Delta7$} (zoo);
    \draw (zoz) edge[loop below] node[below=-1mm] {$\Delta5,\Delta8$} () edge node[below right] {\small $\Delta5$} (zoo) edge node[above right] {\small $\Delta8$} (ooz);
    \draw[every edge/.style={draw}] (ooz) edge[loop right] node[right] {\small $\Delta6$} ();
    \draw[every edge/.style={draw}] (zoo) edge[loop right] node[right] {\small $\Delta9$} ();
  \end{fsa}
\]
is the kernel of the map $L\twoheadrightarrow\Omega_\phi$.

The system $\Omega_\phi$ is not irreducible; besides the fixed set $\{\zero^\Z\}$ it also contains for example the closure of the $\Z$-orbit of $\phi^\infty(\zero\underline\one)$, which is coded by $L\cap\{1,\dots,7\}^\omega$.

The generator $S$ of the shift is coded by the following automaton:
\[\begin{fsa}[every edge/.style={sloped,draw}]
    \node[state] (zzz) at (0,0) {};
    \node[state] (ozz) at (2,-1) {};
    \node[state] (zzo) at (2,1) {};
    \node[state] (ozo) at (4,0) {};
    \node[state] (zoz) at (6,0) {};
    \node[state] (ooz) at (5,-1.732) {};
    \node[state] (zoo) at (5,1.732) {};
    \draw (zzz) edge[loop below] node[below] {\small $\Delta1,\Delta3$} () edge node {\small $\Delta1$} (zzo) edge node[above] {\small $\Delta3$} (ozz);
    \draw (ozz) edge[loop above] node[above] {\small $\Delta2$} () edge node[above] {\small $\Delta2$} (ozo);
    \draw (zzo) edge[loop above] node[right] {\small $\Delta4$} () edge node {\small $\Delta4$} (ozo);
    \draw (ozo) edge node {\small $\Delta7$} (zoz) edge node[below] {\small $\Delta7$} (ooz) edge node[above] {\small $\Delta7$} (zoo);
    \draw[every edge/.style={draw}] (zoz) edge[loop right] node {$\Delta5,\Delta8$} ();
    \draw (zoz) edge node[above] {\small $\Delta5$} (zoo) edge node[above] {\small $\Delta8$} (ooz);
    \draw (ooz) edge[loop below] node[right] {\small $\Delta6$} ();
    \draw (zoo) edge[loop above] node[left] {\small $\Delta9$} ();
    \node[state,initial above] (s0) at (-2,0) {};
    \draw[every edge/.style={draw}] (s0) edge [loop left] node[left] {\small $(3,1)$} ();
    \draw (s0) edge node[above] {\small $(1,3)$} (zzz)
      edge[bend right=20] node[below] {\small $(1,2)$} (ozz) edge[bend left=20] node[above] {\small $(1,4)$} (zzo)
      edge[bend right=60] node[below] {\small $(2,4)$} (ozo);
    \node[state,initial right] (s1) at (7,-1.732) {};
    \node[state,initial right] (s2) at (7,1.732) {};
    \draw (s1) edge[loop below] node[right] {$(8,2)$} () edge node {\small $(5,7)$} (zoz);
    \draw (s2) edge[loop above] node[right] {$(4,5)$} () edge node {\small $(7,8)$} (zoz) edge node {\small $(7,9)$} (zoo);
    \node[state,initial below] (s3) at (3,-2) {};
    \draw[every edge/.style={draw}](s3) edge[loop left] node[left] {$(9,6)$} ();
  \end{fsa}
\]
Note that the two rightmost states define, when restricted to $\{1,3\}^\omega$, a copy of the binary odometer.

\subsection{A circle rotation}
We give one example of an action involving an equivalence relation: an irrational rotation on the circle. Consider $\phi=(\sqrt5-1)/2\approx0.618$, and the action of $\Z$ on $X=\R/\Z$ by $n\cdot x=x+n\phi$. This action is automatic, as follows.

The language $L$ is the same language $L'$ from~\S\ref{ss:fibo}, namely all words over $\{0,1\}$ without two consecutive $1$'s. The coding map $\pi\colon L'\twoheadrightarrow X$ is the base-$\phi$ expansion:
\[\pi(x)=\sum_{n\ge0}x_n\phi^{n+1}\mod\Z.\]

An almost-inverse of $\pi$ is given as follows. Consider two maps $t_0,t_1$, whose union is a map $[0,1)\to[0,1)$, given by
\[t_0\colon
  \begin{cases}[0,\phi)&\mapsto[0,1)\\
    x&\mapsto x/\phi
  \end{cases},\qquad
  t_1\colon
  \begin{cases}
    [\phi,1)&\mapsto[0,1)\\
    x&\mapsto (x-\phi)/\phi
  \end{cases},
\]
and assign to every $x\in[0,1)$ its unique itinerary under $\{t_0,t_1\}$. This recovers one element of $\pi^{-1}(x)$; the possibly other element of $\pi^{-1}(x)$ arises from the same formulas for $t_0,t_1$, but with the other choice of endpoints $(0,\phi],(\phi,1]$. We have $1=\phi+\phi^3+\dots$, so the kernel of $\pi$ is the relation
\[\begin{fsa}
[every edge/.style={draw},every state/.style={draw,circle,minimum size=6mm}]
    \node[state,initial above] (zero) at (0,0) {$0$};
    \node[state,initial left] (zero') at (0,-2) {$0$};
    \node[state,initial above] (mone) at (2,0) {\clap{$-1$}};
    \node[state] (mphi) at (4,0) {\clap{$-\phi$}};
    \node[state,initial above] (one) at (-2,0) {$1$};
    \node[state] (phi) at (-4,0) {\clap{$\phi$}};
    \draw (zero) edge[bend left=15] node[right] {\small $\Delta1$} (zero')
    (zero') edge[bend left=15] node[left] {\small $\Delta0$} (zero)
    (zero') edge[loop right] node[right] {\small $\Delta0$} ()
    (zero) edge node[above] {\small $(0,1)$} (mone)
    (zero) edge node[above] {\small $(1,0)$} (one)
    (mone) edge[bend left=15] node[above] {\small $(1,0)$} (mphi)
    (mphi) edge[bend left=15] node[below] {\small $(0,0)$} (mone)
    (one) edge[bend right=15] node[above] {\small $(0,1)$} (phi)
    (phi) edge[bend right=15] node[below] {\small $(0,0)$} (one);
  \end{fsa}
\]
For example, we have $\pi(0^\omega)=0=1=\pi((10)^\omega)$, and $\pi((01)^\omega)=\phi=10^\omega$. The values indicated inside a state represents the value $\sum_{i\ge0}(a_i-b_i)\phi^{i+1}$ for all sequences $(a_0,b_0)(a_1,b_1)\dots$ accepted from that state.

The action of $\Z$ on $X$ lifts to the action of $\Z$ on $L'$ given in~\S\ref{ss:fibo}, namely adding integers in their nega-Zeckendorff representation, or acting on the Fibonacci subshift. Indeed the language $L'$ could have been obtained as follows: in $X$, consider the orbit $\phi\Z\bmod\Z$ of $0$, which is dense because $\phi$ is irrational. Replace each point $n\phi$ by two points $(n\phi)^-$ and $(n\phi)^+$, with the order topology (so we think of a big gap between $(0\phi)^-$ and $(0\phi)^+$, and smaller and smaller gaps as $n$ increases). The resulting topological space $\widetilde X$ is precisely $L'$, and the action of $\Z$ on $X$ obviously lifts to an action on $\widetilde X$, by $n\cdot(m\phi)^{\pm}=((n+m)\phi)^\pm$.

I am not aware of any ``interesting'' example of an automatic group action on a quotient $L/{\cong}$ for which the relation $\cong$ is not closed; see~\S\ref{ss:bounded} where this question appears.

\section{Bounded automata}\label{ss:bounded}
Consider a regular language $L\subseteq \Sigma^\omega$, recognized by a chosen automaton $\mathscr L$. In this section, we assume that $L$ is closed, and that $\mathscr L$ is a safety automaton (namely, all its states are accepting). Let $\mathscr A$ be a safety automaton on alphabet $\Sigma\times\Sigma$, and assume without loss of generality that $\mathscr A$ contains as a subautomaton $\Delta(\mathscr L)$, the automaton $\mathscr L$ in which every label $\sigma\in\Sigma$ is replaced by $(\sigma,\sigma)\in\Sigma\times\Sigma$.
\begin{defn}
  A right-infinite path in $\mathscr A$ is called \emph{singular} if it does not reach $\Delta(\mathscr L)$.
  
  A state $s$ of $\mathscr A$ is called \emph{bounded} if the number of singular paths that start at $s$ is finite. It is called \emph{finitary} if there are no such right-infinite paths.

  The automaton $\mathscr A$ is called \emph{bounded} if all its initial states are bounded, and \emph{finitary} if all its initial states are finitary.
\end{defn}

We shall be particularly interested in partial bijections $g\colon L\dashrightarrow L$ represented by bounded automata, which we call \emph{bounded partial bijections}.

\begin{prop}
  The product and inverse of bounded partial bijections are bounded; therefore an inverse semigroup consists of bounded elements as soon as its generators are bounded.
\end{prop}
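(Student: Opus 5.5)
The plan is to work directly with the explicit automata, as in the usual constructions of inverse and composition for transducers.

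\emph{Inverse.} If $g$ is represented by $\mathscr A$, then $g^{-1}$ is represented by the automaton $\mathscr A^{-1}$ obtained by swapping the two coordinates of every label. This swap maps the subautomaton $\Delta(\mathscr L)$ to itself. Right-infinite paths in $\mathscr A^{-1}$ are exactly those of $\mathscr A$, and a path reaches $\Delta(\mathscr L)$ in one automaton if and only if it does in the other. So singular paths correspond bijectively, and boundedness of the initial states is preserved.

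\emph{Product.} Let $g,h$ be represented by bounded safety automata $\mathscr A,\mathscr B$. The product $gh$ is represented by the product automaton $\mathscr C$ with stateset $S_{\mathscr A}\times S_{\mathscr B}$. It has an edge $(s,t)\to(s',t')$ labelled $(x,z)$ whenever $\mathscr A$ has an edge $s\to s'$ labelled $(x,y)$ and $\mathscr B$ has an edge $t\to t'$ labelled $(y,z)$ for some $y$. Initial states are pairs of initial states, and all states are residual (since $\mathscr A,\mathscr B$ are safety automata, the product is a safety automaton recognizing the composition).

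To meet the standing convention, I would adjoin a fresh copy of $\Delta(\mathscr L)$ and identify it with the subautomaton of $\mathscr C$ on the pairs $(u,u)$ with $u\in\Delta(\mathscr L)$. On such pairs both coordinates follow the same path of $\mathscr L$, so these product states really do carry $\Delta(\mathscr L)$. In addition, any pair $(u,v)$ with $u,v$ both in $\Delta(\mathscr L)$ recognizes only diagonal pairs. For such a pair I would redirect the outgoing behaviour into the diagonal copy: more precisely, I would replace the state $(u,v)$ by the state of $\mathscr L\times\mathscr L$ restricted to equal letters, which is again a copy of a subautomaton of $\Delta(\mathscr L\times\mathscr L)$. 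Cleanest is to enlarge the ``diagonal'' part to the diagonal of the product of $\mathscr L$ with itself, which recognizes the same kind of language, and check the definitions allow this.

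\emph{Counting singular paths in $\mathscr C$.} Take a right-infinite path in $\mathscr C$ from an initial state, and let $p_{\mathscr A},p_{\mathscr B}$ be its projections. If both projections eventually enter $\Delta(\mathscr L)$, then the path enters the (enlarged) diagonal part and is not singular. Hence a singular path has a singular projection in $\mathscr A$ or in $\mathscr B$. There are finitely many of these, say the path $p_{\mathscr A}$ is one of finitely many singular paths of $\mathscr A$.

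Fix $p_{\mathscr A}$. Its label determines the intermediate word $y$ uniquely. The $\mathscr B$-component is then a path in $\mathscr B$ reading $y$ in its first coordinate. Either it is singular, and there are finitely many choices, or it enters $\Delta(\mathscr L)$ at some time $n$ and is then determined by $y$ together with the state reached.

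This last case is the main obstacle. Here $p_{\mathscr B}$ consists of a finite prefix followed by a path in $\mathscr L$ reading the tail of $y$, and a priori there could be infinitely many such paths. Two arguments limit this. First, the possible entry times $n$ could be unbounded; but a singular path of $\mathscr A$ is eventually periodic (finitely many singular paths, each of whose tails is again singular, forces them onto finitely many cycles), so $y$ is ultimately periodic. Second, there are only finitely many paths of $\mathscr L$ along an ultimately periodic word, as long as $\mathscr L$ is taken deterministic, which we may assume for a safety automaton. Combining these, there are finitely many singular paths in $\mathscr C$ up to choosing the entry time $n$. I would then bound $n$ by noting that the prefix of $p_{\mathscr B}$ before entering $\Delta(\mathscr L)$ is a finite path, and distinct entry times yield distinct paths of $\mathscr C$ only through finitely many state-pairs on the periodic part of $y$. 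This last finiteness should be checked carefully via a pumping argument: infinitely many entry times would produce a non-diagonal cycle in $\mathscr B$ read along a periodic word, contradicting boundedness of $\mathscr B$.

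The final clause of the statement then follows by induction on word length.
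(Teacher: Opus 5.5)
Your treatment of the inverse matches the paper's. The product argument, however, has a genuine gap at its last step. You claim that infinitely many entry times ``would produce a non-diagonal cycle in $\mathscr B$ read along a periodic word, contradicting boundedness of $\mathscr B$''. This is false: bounded automata may have disjoint cycles outside $\Delta(\mathscr L)$ (the odometer has one), and such a cycle gives only finitely many singular paths.

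In fact, the statement you are trying to establish, that the literal product automaton $\mathscr C$ has finitely many singular paths, fails in general. Here is a counterexample.
\begin{itemize}
\item Take $L=\{0,1\}^\omega$, with $\mathscr L$ a single state $\ell$.
\item Let $\mathscr A$ be the odometer: an initial state $s$ with a loop $(1,0)$ and an edge $(0,1)$ to $\ell$.
\item Let $\mathscr B$ have an initial state $c$ with a loop $(0,0)$ and edges $(0,0),(1,1)$ to $\ell$.
\end{itemize}
Then $\mathscr B$ recognizes the identity and has the single singular path $c^\omega$. Yet $\mathscr C$ has the singular paths $(s,c)^\omega$ and $(s,c)^k(s,\ell)^\omega$ for every $k\ge1$, all carrying the same label $(1^\omega,0^\omega)$. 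Your argument never uses that $g$ and $h$ are univalent and injective. Without that, nothing prevents $\mathscr B$ from having many runs along the same intermediate word $y$, which is exactly what happens here. Since a bounded partial bijection only needs \emph{some} bounded automaton representing it, the automaton has to be changed, not just analysed more carefully.

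The missing idea is to exploit univalence and injectivity through determinism on labels.
\begin{itemize}
\item Replace $\mathscr A,\mathscr B$ by their minimal automata (states $g@(u,v)$). These are deterministic over $\Sigma\times\Sigma$, so a path from the initial state is determined by its label.
\item Every infinite path of a safety automaton is accepting. Hence univalence of $h$ gives at most one path of $\mathscr B$ with first coordinate $y$.
\item Likewise, injectivity of $g$ gives at most one path of $\mathscr A$ with a prescribed second coordinate; this is what handles the case where the $\mathscr B$-projection is the singular one.
\item The number of singular paths of $\mathscr C$ is then at most the sum of those of $\mathscr A$ and $\mathscr B$, with no pumping or eventual periodicity needed.
\end{itemize}
You still have to check two things. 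First, minimization preserves boundedness: by K\"onig's lemma, a path of the minimal automaton that avoids trivial sections lifts to a singular path of the original automaton with the same label. Second, ``trivial'' must be taken in the semantic sense of recognizing a subset of the diagonal, since trivial sections need not literally be states of $\Delta(\mathscr L)$.

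The paper sidesteps the ambiguity differently. It identifies singular paths with their labels and lists the possible labels of singular paths of $\mathscr A\mathscr B$ in terms of those of $\mathscr A$ and $\mathscr B$. In the example above, the infinitely many singular paths collapse to a single label.
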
  
\begin{proof}
  If $\{(s_i,t_i)\}$ is the set of singular paths of $\mathscr A$ and $\{(u_j,v_j)\}$ is the set of singular paths of $\mathscr B$, then $\{(t_i,s_i)\}$ is the set of singular paths of $\mathscr A^{-1}$. The composition of the safety automata $\mathscr A\mathscr B$ is the automaton with stateset $V(\mathscr A)\times V(\mathscr B)$ and transitions $u\to w$ with label $(x,z)$ for all transitions $u\to v$ with label $(x,y)$ in $\mathscr A$ and transitions $v\to w$ with label $(y,z)$ in $\mathscr B$. Therefore, the set of singular paths of $\mathscr A\mathscr B$ is contained in $\{(s_i,t_i)\}\cup\{(u_j,v_j)\}\cup\{(s_i,v_j)\mid t_i=u_j\}$.
\end{proof}

It is easy to verify, given an automaton, whether it defines a bounded transformation:
\begin{lem}
  Let $\mathscr A$ be an automaton defining a partial bijection $g\colon L\dashrightarrow L$, and assume that $\mathscr A$ is minimized. The following are equivalent:
  \begin{enumerate}
  \item There is a bounded number of paths of length $n$ in $\mathscr A$ and not reaching $\Delta(\mathscr L)$;
  \item The transformation $g$ is bounded;
  \item There are finitely many left-infinite paths in $\mathscr A$ not reaching $\Delta(\mathscr L)$;
  \item There are finitely many bi-infinite paths in $\mathscr A$ not reaching $\Delta(\mathscr L)$;
  \item All cycles in $\mathscr A\setminus\Delta(\mathscr L)$ are disjoint.
  \end{enumerate}
\end{lem}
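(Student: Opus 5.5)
The statement reduces to a purely combinatorial statement about the finite directed graph $\mathscr H\coloneqq\mathscr A\setminus\Delta(\mathscr L)$. A path "not reaching $\Delta(\mathscr L)$" is exactly a path that stays inside $\mathscr H$, because $\Delta(\mathscr L)$ is closed under outgoing transitions: once a path enters it, it never leaves. A right-infinite singular path from an initial state is then a right-infinite path of $\mathscr H$.

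From minimality of $\mathscr A$ I will use two facts. Every state is accessible from an initial state, and every state admits an infinite continuation. The accessing path of a state of $\mathscr H$ must itself lie in $\mathscr H$, since it cannot pass through $\Delta(\mathscr L)$. I read condition~(5), in the sense of the abstract, as: \emph{no two distinct cycles of $\mathscr H$ are connected}, i.e.\ there is no path in $\mathscr H$ from a vertex of one cycle to a vertex of a different cycle, shared vertices included. Equivalently, every strongly connected component of $\mathscr H$ is a single vertex or a single simple cycle, and no directed path of $\mathscr H$ meets two of the cycle components.

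First I prove that~(5) implies all of (1)--(4). Under~(5), every path in $\mathscr H$ has the form $p\,c^k\,c'\,q$. Here $p$ and $q$ are acyclic pieces outside any cycle, of length $<|S|$; $c$ is the unique cycle the path touches; and $c'$ is a proper segment of $c$. For a given length $n$, the number of such decompositions is bounded by a constant depending only on $|S|$, because $k$ is determined by $n$ and the other pieces. This gives~(1). A right-infinite path in $\mathscr H$ must eventually wind around a cycle forever and can leave no cycle, so it is $p\,c^\omega$ with $p$ acyclic. There are finitely many such paths, which gives boundedness of every state, hence~(2). The same description, reversed, gives finitely many left-infinite paths $c^{-\omega}q$, hence~(3). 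Bi-infinite paths must be $c^{\mathbb Z}$, up to shift and the choice of base point, hence~(4).

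Conversely, I prove that each of (1)--(4) implies~(5), by contraposition. Suppose $c_1\neq c_2$ are cycles in $\mathscr H$ and $r$ is a path from $c_1$ to $c_2$; $r$ may be empty if the cycles share a vertex. For $k\in\N$ consider the following paths:
\begin{itemize}
\item $c_1^k\,r\,c_2^\omega$, preceded by a path from an initial state to $c_1$ (which lies in $\mathscr H$ by the remark above);
\item the left-infinite paths $c_1^{-\omega}\,r\,c_2^k$;
\item the bi-infinite paths $c_1^{-\omega}\,r\,c_2^{\omega}$, shifted by $k$ along $c_2$.
\end{itemize}
These are pairwise distinct for different $k$, since $c_1\neq c_2$ and the switching point moves. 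This contradicts (2), (3) and (4) respectively. For~(1), the paths $c_1^k\,r\,c_2^{m}$ of total length near $n$ already give on the order of $n/(|c_1||c_2|)$ distinct paths of length $n$, up to padding by a bounded prefix/suffix, and this number is unbounded. For~(2) the initial-state prefix is needed, and this is exactly where minimization (trimness) enters.

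The main obstacle is the bookkeeping in the case where $c_1$ and $c_2$ share vertices, or lie in the same strongly connected component. There I will check that the distinct switching times really produce distinct paths, as opposed to merely distinct decompositions of the same path. I would handle this by choosing $c_1,c_2$ as two distinct simple cycles through a common vertex $v$: the paths $c_1^k c_2^\omega$ are distinguished by the first index at which the path leaves $c_1$. Everything else is a routine count in a finite graph.
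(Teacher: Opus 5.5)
Your proposal is correct and follows the paper's proof. Like the paper, you reduce to the finite graph $\mathscr A\setminus\Delta(\mathscr L)$, show that under~(5) every path winds around a single cycle, and conversely use two connected cycles to build the families $c_1^k\,r\,c_2^\omega$, $c_1^{-\omega}\,r\,c_2^k$ and shifts of $c_1^{-\omega}\,r\,c_2^\omega$, which is the paper's $P^\omega QR^\omega$ with varying truncation and origin; your reading of~(5) as ``no two cycles are connected'' is exactly what the paper's proof uses, and your explicit use of minimality (accessibility from an initial state) for~(2) and your distinctness bookkeeping fill in details the paper leaves implicit.
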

\begin{proof}
  This is really a statement about the directed graph $\mathscr A\setminus\Delta(L)$. We show that all conditions are equivalent to the last one.

  If all cycles are disjoint, then every bi-infinite path has to follow on one of these cycles; right-infinite paths may exit it at some point, but then have finitely many possibilities before ending; left-infinite paths have finitely many possibilities before entering such a cycle; and finite paths have a bounded number of possibilities before and after cycling around a cycle.

  Conversely, if there are two cycles that are connected to each other, then there is a path $P^\omega Q R^\omega$, leading to infinitely many bi-infinite paths by choosing their origin; they have infinitely many left-infinite and right-infinite truncations, and an unbounded number of length-$n$ factors.
\end{proof}

Examples of bounded automata abound, in particular coming as iterated monodromy groups (see~\S\ref{ss:img}). The Grigorchuk group is also bounded, as are most examples from the literature.
\begin{prop}
  If $f$ is a post-critically finite complex polynomial (namely, every critical point of $f\in\C[z]$ has a preperiodic or periodic orbit), then the iterated monodromy group of $f$ is bounded.\qed
\end{prop}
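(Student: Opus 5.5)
The plan is to exhibit an explicit finite automaton for $\operatorname{IMG}(f)$ whose non-identity states have a very restricted transition structure, and then to verify condition~(5) of the preceding lemma. Here $L=\Sigma^\omega$ is the full shift, so $\Delta(\mathscr L)$ is simply the identity state. Let $P$ be the post-critical set of $f$, which is finite by assumption, and work on $\mathcal M=\C\setminus P$ with the partial self-covering $f\colon f^{-1}(\mathcal M)\to\mathcal M$. This is the setting of~\S\ref{ss:img}, with $f^{-1}(\mathcal M)\subseteq\mathcal M$ instead of a genuine self-covering; the monodromy construction goes through verbatim.

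Following Nekrashevych's construction via external rays, I will take the basepoint $*$ close to $\infty$, for instance on a large circle. For each $p\in P$ I take as generator $g_p$ a loop that travels from $*$ inward along a path avoiding the external rays landing on $P$, circles $p$ once counterclockwise, and returns. For the spider, the connecting paths $\ell_\sigma$ from $*$ to $\sigma\in f^{-1}(*)$ will run near $\infty$, inside the sectors cut out by the preimage rays. With these planar choices, the lifts of $g_p$ under $f$ can be read off directly, as follows.
\begin{itemize}
\item Each lift of $g_p$ through a point $q\in f^{-1}(p)$ winds around $q$ a number of times equal to the local degree $\deg_q f$, and such lifts encircle no other point of $P$.
\item In the wreath recursion $\Psi(g_p)$, the sections of $g_p$ are therefore the identity, except at one letter for each $q\in f^{-1}(p)\cap P$; at that letter the section is $g_q$ itself, and not merely a conjugate of it.
\item Consequently, the finite Mealy machine with stateset $\{1\}\cup\{g_p\mid p\in P\}$ generates $\operatorname{IMG}(f)$.
\end{itemize}

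The key structural observation is the following: for each $p$, the set $f^{-1}(p)\cap P$ has at most one element which is not critical-value-generated in a way that splits off further nontrivial lifts. More precisely, a point of $P$ has exactly one preimage in $P$ counted along orbits, the one preceding it in its post-critical orbit, except that a periodic point may also be $f$ of a strictly preperiodic point of $P$. I therefore expect to have to refine the generating set. The claim I will aim for is that, in the Mealy graph restricted to non-identity states, every state has at most one outgoing edge. Granting this, every connected component of that graph is a functional graph, which contains at most one cycle. Hence all cycles in $\mathscr A\setminus\Delta(\mathscr L)$ are disjoint, and condition~(5) of the lemma gives boundedness of every $g_p$. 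The earlier proposition on products and inverses then extends boundedness to all of $\operatorname{IMG}(f)$.

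The main obstacle is exactly the out-degree count. A point $p\in P$ may have several preimages in $P$: one on a periodic cycle and one strictly preperiodic, when the orbit enters the cycle. If $g_p$ then had two nontrivial sections, the non-identity graph would branch. I will handle this in two steps.
\begin{itemize}
\item First, recall that a non-identity state's two successor paths must lie in distinct components or eventually merge; by the lemma, what actually matters is only that two distinct cycles are never connected.
\item Second, I will show that if a state $g_p$ has two non-identity successors $g_q,g_{q'}$, with $q$ periodic and $q'$ preperiodic, then the branch through $q'$ follows the strictly preperiodic orbit backwards. This backward orbit terminates at a critical point which has no preimage in $P$, hence reaches the identity after finitely many steps without meeting any cycle.
\end{itemize}
Since the only cycles are those coming from periodic cycles of $P$, and these are pairwise disjoint orbits, no two cycles are connected, and condition~(5) holds. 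I expect the careful bookkeeping of sections to be the delicate point: the spider must be chosen so that the sections are literally the generators $g_q$, and the backward paths through preperiodic points must be checked to terminate.
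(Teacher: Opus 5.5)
Your overall plan (realize $\operatorname{IMG}(f)$ by a kneading-type recursion on $\{1\}\cup\{g_p\}$, then check condition~(5) of the lemma) is the standard route to this statement. The paper states it without proof and later points to a good spider chosen in the Hubbard tree. But the step that carries all the content is your second bullet, and it does not follow from the first.

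Suppose the lift of $g_p^{e}$, $e=\deg_q f$, over the cycle at $q\in f^{-1}(p)$ is a loop encircling only $q$. This tells you only that the product of the $e$ sections along that cycle is \emph{conjugate} to $g_q$ in $\pi_1(\C\setminus P,*)$. It tells you neither that all but one of these sections are trivial, nor that the remaining one is $g_q$ on the nose. Both facts depend on how the spider legs sit relative to the approach paths of the generators, and boundedness is genuinely not invariant under change of spider (the paper stresses that one needs a \emph{good} spider).

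Your choices do not pin this down. As stated, the legs cannot stay ``inside the sectors cut out by the preimage rays'', because the $d$ preimages of $*$ lie one in each sector. Which rays the legs cross is exactly what determines the sections. The natural way to force literal sections is to choose the rays forward-invariantly, with $R_q$ one of the lifts of $R_{f(q)}$, so that a lift of $g_p$ meets the original rays only through $R_q$. That is not always possible:
\begin{itemize}
\item A periodic point of $P$ may have nonzero combinatorial rotation number. An example is the $\alpha$-fixed point of $z^2+c$ for $c$ in the $1/3$-limb, when $\alpha$ is post-critical: its rays $R_{1/7},R_{2/7},R_{4/7}$ are permuted cyclically, and no ray landing there is fixed.
\item Post-critical points in the Fatou set (every periodic critical point) are not landing points of external rays at all.
\end{itemize}
What is missing is a proof of Nekrashevych's theorem that, for a suitable planar generating set and spider, $\operatorname{IMG}(f)$ is generated by a kneading automaton. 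As written, you assert this rather than prove it.

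Once such a recursion is available, the combinatorics is easy, but you are aiming at the wrong invariant. The out-degree claim is false. For $z^2+i$, the generator $g_{-1+i}$ has the two nontrivial sections $g_i$ and $g_{-i}$. With several critical points, strictly preperiodic points can also have several post-critical preimages, a case your second bullet does not treat.

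The right observation is about in-degree: $g_q$ occurs as a section only of $g_{f(q)}$, and at a single letter. In a directed graph where every vertex has in-degree at most one, each connected component contains at most one cycle (reverse the arrows to get a partial functional graph). So condition~(5) is immediate. Equivalently, the cycles are the reversed periodic cycles of $f|_P$, and a path leaving one meets only strictly preperiodic points and dies at the identity.

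Two minor points. A single lift of $g_p$ turns only $1/\deg_q f$ of the way around $q$; it is the lift of $g_p^{\deg_q f}$ that closes up, winding once. Backward chains through strictly preperiodic points end at a critical value $f(c)$ with $c\notin P$, not at a critical point.
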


The following result extends the idea that ``carries only occur at $\dots 999$'' in base $10$, namely that, when an element of a subshift is decomposed into superblocks for the substitution, the action of the generator $+1$ of $\Z$ is non-finitary only when at the rightmost boundary of infinitely many superblocks:
\begin{prop}
  If $\Omega_{\phi,\psi}$ is a morphic subshift, then the translation action of $\Z$ on $\Omega_{\phi,\psi}$ can be assumed to be bounded.
\end{prop}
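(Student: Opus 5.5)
We work with the encoding constructed in the proof of Proposition~\ref{prop:morphic}. The language $L$ is given by the automaton $\mathscr A$ with stateset $\Sigma=\{(\ell\underline a r,i)\}$, and the shift is given by the automaton $\mathscr S$ with stateset $\Sigma\sqcup\Sigma^2$. All states are residual, so $\mathscr S$ is a safety automaton. It contains $\Delta(\mathscr A)$ on the stateset $\Sigma$, and the non-diagonal part lives on $\Sigma^2$. A singular path is therefore a right-infinite path staying forever in $\Sigma^2$. By the preceding lemma, it suffices to show that the cycles of $\mathscr S\setminus\Delta(\mathscr L)$ are pairwise disjoint, after possibly pruning useless states and modifying the encoding.

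We first describe the singular paths. A transition from a state of $\Sigma^2$ to another state of $\Sigma^2$ occurs only along a ``carry''. The source has first coordinate $(\ell\underline a r,|\phi(b)|-1)$, meaning that the marked letter is the last letter of the $\phi$-block $\phi(b)$. Its second coordinate is $(a\underline rc,0)$, the first letter of the next block $\phi(s)$. The target is $((m\underline bs,j),(b\underline sd,k))$, and it must in turn be a carry state if the path is to continue. So along a singular path, $j=|\phi(b_{n+1})|-1$ and $k=0$ at every level $n$. Each state on a singular path is thus determined by the collared pair of adjacent letters $(b_n,s_n)$ together with their collars, and the next pair is forced to satisfy two conditions: $b_n$ is the last letter of $\phi(b_{n+1})$, and $s_n$ is the first letter of $\phi(s_{n+1})$. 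In other words, a singular path reads off a left-infinite chain of ``last letters'' and a right-infinite chain of ``first letters'' simultaneously. Singular paths therefore lie in the product of two graphs on (collared) letters: $\Gamma_{\mathrm{last}}$, with an edge $b\to b'$ when $b$ is the last letter of $\phi(b')$, and $\Gamma_{\mathrm{first}}$, defined likewise with first letters.

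The key point is that each vertex of $\Gamma_{\mathrm{last}}$ has out-degree at most one in the reversed direction. Indeed, $b'$ determines $\phi(b')$ and hence its last letter, so each vertex has at most one predecessor. The cycles of $\Gamma_{\mathrm{last}}$ are therefore disjoint, and each connected component is a tree feeding into at most one cycle. The same holds for $\Gamma_{\mathrm{first}}$. Reading a singular path forward, however, goes against the functional direction: from $b_n$ one may choose any $b_{n+1}$ whose image ends in $b_n$, so branching is possible. To control this, I would argue as follows. A cycle in the product graph projects to cycles in each factor, and cycles in the factors are disjoint and consist of unique-predecessor vertices. Two product cycles sharing a vertex would then have to follow the same cycle in each factor, up to the period. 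We take the period to be the lcm of the cycle lengths, replacing $\phi$ by a power $\phi^p$. This is harmless, since $\Omega_{\phi^p}=\Omega_\phi$ up to the same shift action, and the encoding simply becomes coarser. After this replacement every cycle in each factor graph is a loop, and product cycles are loops at distinct vertices. Two loops at distinct vertices, both lying in $\Sigma^2$, must not be connected, and this is what remains to be checked.

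The main obstacle is this remaining possibility: a path from one loop to another, say from $(b,s)$ to $(b',s')$ with both pairs fixed under ``last/first letter of $\phi$''. Such a path forces $b$ to have a predecessor chain reaching $b'$. But $b$ is a fixed point of the function $b'\mapsto\mathrm{last}(\phi(b'))$, and the unique-predecessor property goes the wrong way to rule this out. I would resolve it by the direction of reading. The path proceeds to $b_{n+1}$ with $\mathrm{last}(\phi(b_{n+1}))=b_n$. The sequence $b_0,b_1,\dots$ is read forward, but it is determined backward by the function $f=\mathrm{last}\circ\phi$. Leaving a loop at $b$ to go to $b_{n+1}\ne b$ with $f(b_{n+1})=b$, and later entering a loop at $b'$, requires $f(b')=b'$ with $f^{k}(b')=b$. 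This is impossible, since $f$-orbits of fixed points are constant. Hence no two loops are connected in either factor, and therefore none are connected in the product. Finally, I would check that collaring does not break the functional property. Collared letters are determined by $\phi$ applied to the collared parent, so the collared version of $f$ is still a function, and we restrict to the pruned automaton. The resulting automaton $\mathscr S$ is bounded, and passing to the quotient by $\psi$ does not change $\mathscr S$, which proves the statement.
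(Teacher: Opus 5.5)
Your proof is correct, and it supplies the step that the paper's one-line proof leaves out. The paper only observes that, by construction, every cycle of $\mathscr S$ off the diagonal runs along carry transitions (those with $i=|\phi(b)|-1$). It does not explain why distinct carry cycles can neither meet nor be joined by a path, and there may be several of them: the Golay--Rudin--Shapiro shift automaton has four loops.

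Your key observation closes this gap. The source $((\ell\underline a r,|\phi(b)|-1),(a\underline r c,0))$ of a carry edge is determined by its target, because $\ell a r c$ is read off from $\phi(mbsd)$. Hence every state in $\Sigma^2$ has at most one predecessor in $\Sigma^2$. That alone finishes the proof: in a finite graph with in-degree at most one, two distinct cycles cannot share a vertex, and a path entering a cycle from outside would give its entry vertex a second predecessor. So no two cycles are connected, which is what boundedness requires.

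Some of your extra steps are therefore unnecessary or slightly off:
\begin{itemize}
\item Passing to $\phi^p$ to turn all cycles into loops is unnecessary, though harmless since $\Omega_{\phi^p}=\Omega_\phi$.
\item Your remark that the unique-predecessor property ``goes the wrong way'' is mistaken. Your final argument with $f^k(b')=b$ is precisely that property.
\item Your product graph on collared letters forgets the index $i_n=|\phi(b_{n+1})|-1$, so states of $\mathscr S$ do not map injectively to it. Rather than checking that this projection is harmless, run the in-degree argument directly on the states of $\mathscr S$. There the index is part of the state and is likewise determined by the successor.
\end{itemize}
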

\begin{proof}
  This follows directly from the construction: the only cycle, in the automaton we gave for the generator $S$ of the shift, is the one along transitions $(\ell\underline a r,i)\to(m\underline b s,j)$ for which $i=|\phi(b)|-1$.
\end{proof}
(Note that we are under the assumption that $\Omega_{\phi,\psi}$ admits an injective encoding by $L$. However, the transitions above, and given in the proof of Proposition~\ref{prop:morphic}, always define a bounded action of $\Z$ on $L$. See Example~\ref{ss:nonminimal} for a morphic subshift with a fixed point, which ``explodes'' into a copy of $\Z_2$ in $L$.)

\subsection{Self-similar families of inverse semigroups}
Let $G$ be an inverse semigroup of partial bijections of $L\subseteq\Sigma^\omega$, acting automatically. For $g\in G$ and $u,v\in\Sigma^*$ of the same length, the \emph{state} $g@(u,v)$ of $g$ at $(u,v)$ is the relation on $\Sigma^\omega$ defined by
\[(x,y)\in g@(u,v)\Longleftrightarrow (u x)\cdot g=v y.\]
We note that, for the state $g@u$ defined in~\S\ref{ss:autgroup} for actions on regular trees, $g@u=g@(u,u\cdot g)$ is again a tree isometry; while in general $g@(u,v)$ could be a partial isometry even if $g$ is total. We have the following useful formula for composition:
\[(gh)@(u,v)=\bigcup_{|w|=|u|=|v|}g@(u,w)h@(w,v).\]

\begin{defn}
  A partial bijction $g\colon L\dashrightarrow L$ is \emph{finite state} if the set of states $\{g@(u,v):u,v\in\Sigma^*,|u|=|v|\}$ is finite.
\end{defn}
If $g$ is an automatic transformation, then it is finite state; the states $g@(u,v)$ are also automatic, with same underlying automaton as $g$ except that the initial states are now those states reachable from an original initial state after reading $(u,v)$.

Conversely, if $g$ is finite state, then it is automatic, and may be represented by the following automaton $\mathscr A_g$: its stateset is $\{g@(u,v):u,v\in\Sigma^*,|u|=|v|\}$, its initial stateset is $\{g@(\varepsilon,\varepsilon)\}$, and there is a transition from $g@(u,v)$ to $g@(u x,v y)$ labeled $(x,y)$ for all $u,v\in\Sigma^*,x,y\in\Sigma$ whenever $g@(u x,v y)$ is non-empty. In fact, $\mathscr A_g$ is the minimal automaton representing $g$.

For $u\in\Sigma^*$, let $u^{-1}L$ denote the set $\{x\in\Sigma^\omega\mid u x\in L\}$, and note that $g@(u,v)$ is a partial bijection $u^{-1}L\dashrightarrow v^{-1}L$. Indeed it is the composition of maps
\[u^{-1}L\overset{u}\longrightarrow L\overset{g}\dashrightarrow L\overset{v^{-1}}\dashrightarrow v^{-1}L,\]
where the first map is $x\mapsto u x$ and the last map is $v x\mapsto x\colon L\cap v\Sigma^\omega\to v^{-1}L$.

Note that if $L$ is an $\omega$-regular language then there are finitely many languages of the form $u^{-1}L$; indeed, these may be taken as the states of an automaton defining $L$. Let us enumerate them as $L_0=L,L_1,\dots,L_n$.
\begin{defn}
  A family of sets of partial bijections $\{G_{i,j}\mid 0\le i,j\le n\}$, with $G_{i,j}\colon L_i\to L_j$, is called \emph{self-similar} if for all $g\in G_{i,j}$ and all $u,v\in\Sigma^*$ of same length one has $g@(u,v)\in G_{i',j'}$ where $i',j'$ are determined by $u^{-1}L_i=L_{i'}$ and $v^{-1}L_j=L_{j'}$. They form a \emph{self-similar inverse semigroup family} if furthermore $G_{i,j}^{-1}=G_{j,i}$ and $G_{i,j}G_{j,k}\subseteq G_{i,k}$ for all $i,j,k$.
\end{defn}

Starting from an inverse semigroup $G$ acting on $L$, one defines
\begin{equation}\label{eq:Gij}
  G_{i,j}=\{g@(u,v)\mid L_i=u^{-1}L,L_j=v^{-1}L\}
\end{equation}
to obtain a self-similar inverse semigroup family; and conversely from a self-similar inverse semigroup family $(G_{i,j})$ we obtain a semigroup $G_{0,0}$ acting on $L$. Performing these two steps starting with an inverse semigroup $G$ acting on $L$, one obtains a possibly larger semigroup acting on $L$, the \emph{state closure} of $G$.

\begin{defn}
  A self-similar inverse semigroup family $(G_{i,j})_{0\le i,j\le n}$ is \emph{contracting} if there is a family of finite subsets $\mathscr N_{i,j}\subseteq G_{i,j}$ such that the following holds: for every $g\in G_{i,j}$ there is $n\in\N$ such that $g@(u,v)\in\bigcup_{0\le i',j'\le n}\mathscr N_{i',j'}$ whenever $|u|=|v|\ge n$.

  A self-similar inverse semigroup family $(G_{i,j})$ is \emph{finitely generated} if there is a family of finite subsets $S_{i,j}\subseteq G_{i,j}$ such that every self-similar inverse semigroup family that termwise contains $(S_{i,j})$, termwise contains $(G_{i,j})$. It is \emph{finite} if all $G_{i,j}$ are finite.

  A self-similar inverse semigroup family $(G_{i,j})$ is \emph{bounded} if every $g\in \bigcup_{i,j}G_{i,j}$ is a bounded partial bijection.
\end{defn}
\begin{lem}
  If $G$ is a finitely generated inverse semigroup acting automatically, then its associated self-similar inverse semigroup family $(G_{i,j})$ as in~\eqref{eq:Gij} is finitely generated.
\end{lem}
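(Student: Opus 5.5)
The natural generating family is given by the states of the generators. Fix a finite generating set $S$ of $G$ and, for each $s\in S$, an automaton $\mathscr A_s$ recognising its graph. I would set
\[S_{i,j}=\{s@(u,v)\mid s\in S,\ u^{-1}L=L_i,\ v^{-1}L=L_j\}.\]
As recalled after the definition of finite-state partial bijections, each $s$ is finite state, its states corresponding to sets of states of $\mathscr A_s$ reachable after reading $(u,v)$. Hence each $S_{i,j}$ is finite, and it lies in $G_{i,j}$ by~\eqref{eq:Gij}.

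Next let $(H_{i,j})$ be any self-similar inverse semigroup family with $S_{i,j}\subseteq H_{i,j}$ for all $i,j$. We must show that $G_{i,j}\subseteq H_{i,j}$. Since $L_0=L$, we have $S\subseteq S_{0,0}\subseteq H_{0,0}$. Because $H$ is closed under inverses ($H_{0,0}^{-1}=H_{0,0}$) and products ($H_{0,0}H_{0,0}\subseteq H_{0,0}$), $H_{0,0}$ contains the inverse semigroup generated by $S$, which is $G$. Now take an arbitrary element $g@(u,v)\in G_{i,j}$, with $g\in G\subseteq H_{0,0}$, $u^{-1}L=L_i$ and $v^{-1}L=L_j$. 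Self-similarity of $(H_{i,j})$, applied to $g\in H_{0,0}$, gives $g@(u,v)\in H_{i',j'}$ with $L_{i'}=u^{-1}L_0=L_i$ and $L_{j'}=v^{-1}L_0=L_j$. So $g@(u,v)\in H_{i,j}$, as required.

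This argument needs no composition formula. Using $S\subseteq H_{0,0}$ alone already suffices, and the states of the generators only serve to make the generating family itself consist of states, which is harmless. The only point needing care is bookkeeping. The $L_k$ are the distinct residual languages, indexed by the languages themselves, so the equality $L_{i'}=L_i$ forces $i'=i$ and $j'=j$. I expect no real obstacle. The one thing to check is that "finitely generated" demands only \emph{some} finite family $(S_{i,j})$, and that the family $(G_{i,j})$ of~\eqref{eq:Gij} is itself self-similar, which the paper has already asserted.
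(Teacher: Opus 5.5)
Your proof is correct and follows the paper's: you use the same family $S_{i,j}$ of states of the generators, and get finiteness from the finitely many sets of states of $\mathscr A_s$ reachable after reading $(u,v)$. You also spell out the generating property that the paper leaves implicit: $S\subseteq S_{0,0}\subseteq H_{0,0}$, closure of $H_{0,0}$ under products and inverses gives $G\subseteq H_{0,0}$, and self-similarity then puts each $g@(u,v)$ in $H_{i,j}$.
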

\begin{proof}
  Let $S$ be a generating set of $G$, and represent each element of $S$ by an automaton. Let $L$ be the language on which $G$ acts, and write $\{L_0,\dots,L_n\}$ for the states of $L$. Define $S_{i,j}$ to be the set of partial bijections $s@(u,v)$ with $s\in S$ and $u,v\in\Sigma^*$ two words of same length with $L_i=u^{-1}L,L_j=v^{-1}L$. Each such $s@(u,v)$ may be represented by an automaton, namely the automaton defining $s$ but for which the initial states are now all those states reachable from an original initial state after reading $(u,v)$. There are finitely many possibilities for such $s@(u,v)$.
\end{proof}

\begin{prop}[see~\cite{bondarenko-n:pcf}*{Theorem~5.3}]\label{prop:boundednuk}
  If $G=(G_{i,j})$ is a finitely generated bounded self-similar inverse semigroup family, then it is contracting.

  More precisely, let $\ell$ be the least common multiple of cycle lengths in automata among a finite generating set of $G$, and let $H=(H_{i,j})_{0\le i,j\le n}$ be the inverse semigroup family generated by all finitary states of these automata. Then $H$ is finite, and the nucleus of $G$ consists of elements of $H$ and of automata that contain a single cycle of length dividing $\ell$, with elements of $H$ attached to it.
\end{prop}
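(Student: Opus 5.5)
We prove the finiteness of $H$ first and then the description of the nucleus, which gives contraction. Fix a finite generating family $(S_{i,j})$, each $s\in S_{i,j}$ given by a minimized bounded safety automaton $\mathscr A_s$. By the Lemma characterizing bounded automata, the cycles of $\mathscr A_s\setminus\Delta(\mathscr L)$ are disjoint. So each state of $\mathscr A_s$ is of one of three kinds: it lies on $\Delta(\mathscr L)$ (a restriction of the identity to some $L_k$); it is finitary (every path from it reaches $\Delta(\mathscr L)$ within a bounded number of steps); or it is bounded and non-finitary.

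\textbf{Finiteness of $H$.} A finitary state $h$ reaches $\Delta(\mathscr L)$ after at most $N$ letters, where $N$ is the number of states. Hence $h$ is determined by its action on words of length $N$, together with the choice of identity restrictions $L_k\to L_k$ at depth $N$. The class of partial bijections with this property, for a fixed depth $N$, is finite: there are finitely many partial bijections between sets of words of length $N$ and finitely many tails $\Delta(L_k)$. Suppose $h,h'$ have depth at most $N$. Then $hh'$ has depth at most $N$, by the composition formula $(gh)@(u,v)=\bigcup_w g@(u,w)h@(w,v)$ and the fact that identities compose to identities. Similarly $h^{-1}$ has depth at most $N$, and the states of $h$ have depth at most $N$. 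So the family $H$ generated by finitary states lies in a finite set.

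\textbf{Contraction.} Take $g=s_1\cdots s_m$, a product of generators. The composed automaton has stateset the product, and its states after reading $(u,v)$ with $|u|=n$ are unions of products $s_1@(u_0,u_1)\cdots s_m@(u_{m-1},u_m)$. Each factor is a state of $\mathscr A_{s_i}$. Once $n$ exceeds $N$ plus the longest pre-cycle path, each factor is either in $H$ or lies on one of the disjoint cycles of $\mathscr A_{s_i}$. Moreover, along a single singular path each factor has a unique cycle continuation.

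Consider a product state of the form $h_0c_1h_1c_2\cdots$, with $c_i$ on cycles. It is again an automaton whose non-finitary part consists of products of cycle states. This product moves around the product of the cycles, which is a union of cycles of length dividing $\ell$. I expect that only one such cycle stays nontrivial, because the supports of the singular paths must match for a bijection, and a cycle in a product of disjoint-cycle automata corresponds to a single singular path in each factor. The attached branches off that cycle are products of finitary states and identities, so they lie in $H$.

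The main obstacle is bounding the number of distinct such cycle-elements independently of $m$. The product of $m$ cycle states is a priori a word of length $m$ in finitely many symbols. I would argue that this product is determined by its \emph{single} singular path, which is a periodic pair of words of period dividing $\ell$, together with the finitely many $H$-decorations at the $\ell$ positions of the cycle. This gives at most $|\Sigma|^{2\ell}\cdot|H|^{C\ell}$ possibilities. The delicate point is to verify that a minimized automaton for such a product has exactly one cycle, which does not hold for an arbitrary union. This should follow from injectivity and univalence of $g$: two distinct singular cycles leaving one state would produce two images of the same periodic word. I would check this by passing to $\ell$-th powers of the alphabet, so that every cycle becomes a loop.

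The nucleus is then the set of states in $H$ together with these one-cycle elements, and it is finite. Every $g$ has all its deep enough states in it, which proves that the family is contracting.
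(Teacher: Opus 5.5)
Your finiteness argument for $H$ is sound, and in fact fills in a point the paper only asserts. One small correction: compositions such as $\mathrm{id}_{L_k}\mathrm{id}_{L_{k'}}$ give partial identities on intersections. So at depth $N$ you should allow partial identities on members of the finite Boolean algebra generated by $L_0,\dots,L_n$, which is closed under $u^{-1}$; finiteness is unaffected.

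The genuine gap is in the contraction step, exactly where you locate the main obstacle. Your count $|\Sigma|^{2\ell}|H|^{C\ell}$ presupposes that every deep section of an arbitrary product $s_1\cdots s_m$ has a single cycle with finitary decorations. This is expected but never proved. The justification you propose also fails, because univalence and injectivity do not prevent two disjoint singular cycles that read \emph{different} input words. For instance, on $\{0,1,2\}^\omega$ let $a\colon 0x\mapsto1x,\ 1x\mapsto2x,\ 2x\mapsto0(x\cdot a)$ and $b\colon 0x\mapsto0x,\ 1x\mapsto2(x\cdot b),\ 2x\mapsto1x$; both are bounded with one cycle. Then $(ab)@(2,0)=a$, $(ab)@(0,2)=b$ and $(ab)@(1,1)=1$. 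So the minimized automaton of the bijection $ab$ contains the two disjoint loops of $a$ and $b$, and its two singular paths read the inputs $2^\omega$ and $01^\omega$. A product of cycle states therefore need not have one cycle. What is true is that its cycles eventually \emph{separate} into distinct sections, and proving this uniformly in the length of the product is the real content of the proposition.

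The missing idea is the paper's induction on the minimal number $t$ of periodic generators in a factorization of $g$. Descend a fixed multiple of $\ell$ levels, far enough that every finitary factor has become a partial identity and every periodic factor has either returned to itself or become finitary. Then one of two things happens:
\begin{itemize}
\item the surviving periodic factors are spread over several sections, each of which needs fewer than $t$ periodic generators, so induction applies;
\item they all survive in one section, which is then a product $g'_1\cdots g'_t$ of periodic generators whose singular paths chain.
\end{itemize}
In the second case, descend $\ell$ more levels: that section either splits again (induction) or returns to itself. Returning places it in the finite set $\mathscr N'$, whose size does not depend on $t$. This is the set of elements with a unique depth-$\ell$ pair $(u,v)$ satisfying $g@(u,v)=g$ and all other depth-$\ell$ sections in $H$. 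This split-or-return dichotomy is what bounds deep sections independently of $m$, and nothing in your proposal plays its role.

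Relatedly, your claim that the branches off the cycle are products of finitary states is inaccurate. Leaving the cycle of a chained product forces only \emph{some} factors off their cycles, so branches are mixed products such as $c_1\cdot\bigl(c_2@(u,w)\bigr)$ with $c_1$ still periodic. These branches are indeed finitary, but that needs an argument. For example, when $w$ is off $c_2$'s cycle, univalence of $c_2$ makes $c_2@(u,w)$ undefined on the periodic word output by $c_1$'s cycle. Alternatively, you could appeal to the induction hypothesis.
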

\begin{proof}
  Let $(S_{i,j})$ be a finite generating set for $G$, and represent each $s\in S_{i,j}$ by an automaton, for example its minimal one.

  Recall that these automata have the following structure: all states are either \emph{finitary} (namely leading to $\Delta(\mathscr L)$), \emph{periodic} (namely lying on one of the disjoint cycles of the automaton), or \emph{preperiodic} (the remaining case). There exists a constant $m$ such that, for every preperiodic state $s$, all states $s@(u,v)$ are periodic or finitary if $|u|=|v|\ge m$. Define therefore
  \[S'_{i',j'}=\{s@(u,v):s\in S_{i,j},|u|=|v|=m,u^{-1}L_i=L_{i'},v^{-1}L_j=L_{j'}\},\]
  and let $(G'_{i,j})$ be the self-similar inverse semigroup family that it generates. The nucleus of $G'$ and of $G$ coincide, so it suffices to prove that $G'$ is contracting. We may therefore assume that all generators of $G$ are periodic or finitary.

  Letting as in the statement $\ell$ denote the least common multiple of the lengths of the cycles of periodic elements among the $S_{i,j}$, we obtain that, for every periodic state $s$ and every $u,v\in\Sigma^\ell$, either $s@(u,v)=s$ or $s@(u,v)$ is finitary; and there exists a single pair $(u,v)$ such that $s@(u,v)=s$.

  For all $i,j$, let $H_{i,j}$ be the self-similar inverse semigroup family generated by $(S_{i,j}\cap\text{finitary})$, and note that all its elements are finitary partial bijections. Let $h\in\N$ be such that $s@(u,v)\subset\Delta(\Sigma^\omega)$ for all $s\in H_{i,j}$ and all $u,v\in\Sigma^{h\ell}$.
  
  For all $i,j$, let $\mathscr N'_{i,j}$ denote the elements $g\in G_{i,j}$ such that there exists unique $u,v\in\Sigma^\ell$ with $g@(u,v)=g$, and $g@(u',v')\in\bigcup_{i',j'}H_{i',j'}$ for all other $(u',v')$. Since $H$ is finite, there are finitely many choices for $u,v$ and the $g@(u',v')$, so $\mathscr N'_{i,j}$ is finite.

  Consider now $g\in G_{i,j}$, and write it as a product $g=g_1\cdots g_{t'}$ with each $g_k\in S_{i_{k-1},i_k}$ and $i_0=i,i_{t'}=j$, and with a minimal number $t$ of periodic $g_k$'s. We will prove by induction on $t$ that there exists $n\in\N$ with $g@(u,v)\in\bigcup_{i,j}\mathscr N'_{i,j}\cup S_{i,j}$ whenever $|u|=|v|=n\ell$.

  If $t=1$, that is all $g_k$ are finitary except one, then $g=q s r$ with $q,r\in\bigcup_{i,j} H_{i,j}$ and $s$ periodic. There exists therefore $n\in\N$ such that $q@(u,v),r@(u,v)\subset\Delta(\Sigma^\omega)$ whenever $|u|=|v|=n\ell$. In particular, $q$ and $r$ act as partial bijections of $\Sigma^{n\ell}$, still written $q,r$ respectively; and $g@(u,v)=s@(u\cdot q,v\cdot r^{-1})$, so $g@(u,v)\in\bigsqcup_{i,j} S_{i,j}$.

  Suppose that the claim is proven for all elements that may be written with $<t$ periodic generators, and consider an element $g=g_1\cdots g_{t'}$ which requires $t$ periodic generators. For every $k$ and every $u,v\in\Sigma^{h\ell}$, either $g_k@(u,v)=g_k$ or $g_k@(u,v)$ is finitary; and furthermore if $g_k$ was finitary then $g_k@(u,v)\subset\Delta(\Sigma^\omega)$. Therefore, either the periodic generators appear in more than one $g@(u,v)$ and therefore all $g@(u,v)$ can be written with $<t$ periodic generators, in which case induction applies; or all periodic generators of $g$ appear in some given $g@(u,v)$. In that case, the non-periodic generators also vanished, so $g@(u,v)$ is a product of $t$ periodic generators $g'_1\cdots g'_t$.

  Applying again the argument to $g@(u,v)$, we obtain that for every $k$ and every $u',v'\in\Sigma^\ell$, either $g'_k@(u',v')=g'_k$ or $g'_k@(u',v')$ is finitary. If the periodic generators appear in more than one $g@(u u',v v')$, then all $g@(u u',v v')$ can be written with $<t$ periodic generators, in which case induction applies; or all periodic generators of $g@(u,v)$ appear in some given $g@(u u',v v')$, in which case $g@(u u',v v')=g@(u,v)$ so $g@(u,v)\in\bigcup_{i,j}\mathscr N'_{i,j}$.

  Now set $\mathscr N_{i,j}=\bigcup_{|u|=|v|<\ell}\mathscr N'_{i,j}@(u,v)\cup S_{i,j}$. It follows that the nucleus of $G$ is contained in $\bigcup_{i,j}\mathscr N_{i,j}$.
\end{proof}

Though we shall not need this here, Nekrashevych's theory of ``limit spaces'' extends quite directly to the inverse semigroup setting. Recall that for a group $G$ acting by tree isometries on $\Sigma^*$ in a self-similar and contracting manner, its associated \emph{limit space} is the quotient of $\Sigma^{-\N}$ by ``asymptotic equivalence'': this is the equivalence relation on $\Sigma^{-\N}$ defined by
\[(x_n)_{n<0}\sim(y_n)_{n<0}\text{ if and only if }\exists(g_n)_{n<0}:\begin{cases}\forall n: g_n(x_n\dots x_{-1})=y_n\dots y_{-1},\\\#\{g_n:n\in-\N\}<\infty.\end{cases}\]
In fact, it is enough to consider elements $g_n$ in the nucleus.

In our setting, there is no action on $\Sigma^*$, but rather an action on a language $L\subseteq\Sigma^\omega$. There is a natural extension of $L$ to $\widehat L\subset\Sigma^\Z$, which may be defined as follows. Consider the graph with vertex set $\{u^{-1}L:u\in\Sigma^*\}$, and put an edge from $u^{-1}L$ to $(u s)^{-1}L$ labeled $s$ for all $u\in\Sigma^*,s\in\Sigma$ such that $(u s)^{-1}L$ is nonempty. Then paths in this graph, starting at $L$, define elements of $L$, $\widehat L$ may be defined as the labels along bi-infinite paths in this graph that pass through $L$ at time $0$.

Consider a self-similar inverse semigroup family $(G_{ij})$ acting on $L$. Asymptotic equivalence is defined on $\widehat L$, as follows: given $(x_n)_{n\in\Z}$ and $(y_n)_{n\in\Z}$, they are declared to be asymptotically equivalent if there is a sequence of elements $(g_n)_{n\in\Z}$ in $\bigcup_{i,j}G_{i,j}$ with $\forall n:g_n(x_n x_{n+1}\dots)=y_n y_{n+1}\dots$ and $\#\{g_n\}<\infty$. If $x_n x_{n+1}\dots\in L_i$ and $y_n y_{n+1}\dots\in L_j$ then we require $g_n\in G_{i,j}$. As in the classical setting, we may in fact assume $g_n\in\mathscr N_{i,j}$.

The quotient of $\widehat L$ by asymptotic equivalence is the \emph{limit solenoid}. It is a \emph{Smale space} in the sense of Ruelle, Bowen et al. (see~\cite{ruelle:thermo,anderson-p:invariants}), and coincides in the case of substitutional subshifts with the  Smale-Williams solenoid~\cite{williams:expanding}. This will be detailed in~\cite{bartholdi-m:actions2}.

\section{Orbits}
Let $G$ be an inverse semigroup acting automatically on $L$, and consider the relations
\begin{align*}
  \mathscr O_G &=\{(x,x\cdot g)\mid x\in L,g\in G\}=\{(x,y)\mid x\cdot G=y\cdot G\},\\
  \mathscr P_G &=\{(x,y)\mid x,y\in L,\overline{x\cdot G}=\overline{y\cdot G}\},\\
  \mathscr Q_G &=\{(x,y)\mid x,y\in L,x\cdot G\subseteq\overline{y\cdot G}\},\\
  \mathscr R_G &=\{(x,y)\mid x,y\in L,x\in\overline{y\cdot G}\},\\
  \mathscr S_G &=\{(x,y)\mid x,y\in L,x\cdot G\cap\overline{y\cdot G}\neq\emptyset\},\\
  \mathscr T_G &=\{(x,y)\mid x,y\in L,\overline{x\cdot G}\cap\overline{y\cdot G}\neq\emptyset\}.
\end{align*}
Clearly $\mathscr O_G\subseteq \mathscr P_G\subseteq \mathscr Q_G\subseteq \mathscr R_G\subseteq \mathscr S_G\subseteq \mathscr T_G$, and $\mathscr O_G,\mathscr P_G$ are equivalence relations. If the $G$-action is not continuous, then none of these relations has to be closed.

There is in principle no reason for the relations $\mathscr O_G$, etc.\ to be $\omega$-automatic, and in fact there are cases in which they are not; see~\cite{bartholdi:wordorder} in which it is shown that $\{n\in\N\mid (0^\omega,1^n0^\omega)\in\mathscr O_G\}$ can be non-recursive, even for a contracting group $G$.

The situation is quite special for bounded automata: our main result is the
\begin{thm}\label{thm:orbits}
  Let $G$ be a finitely generated inverse semigroup acting boundedly and automatically on a language $L$. Then the relation $\mathscr O_G$ is effectively $\omega$-automatic.

  If furthermore the action of $G$ is topologically automatic, then the relations $\mathscr P_G,\mathscr Q_G,\mathscr R_G,\mathscr S_G,\mathscr T_G$ are also effectively $\omega$-automatic.
\end{thm}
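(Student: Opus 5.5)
The plan is to reduce everything to the nucleus of the state closure of $G$, using Proposition~\ref{prop:boundednuk}. First replace $G$ by its state closure, i.e.\ the self-similar inverse semigroup family $(G_{i,j})$ of~\eqref{eq:Gij}. By the lemma preceding Proposition~\ref{prop:boundednuk}, this family is finitely generated, and by the proposition on products and inverses it is bounded. Proposition~\ref{prop:boundednuk} then makes it contracting, with an explicitly computable finite nucleus $\mathscr N=\bigcup_{i,j}\mathscr N_{i,j}$. Each nucleus element is an automaton consisting of a single cycle of length dividing $\ell$, with elements of the finite family $H$ attached to it. We may also add the finitely many elements of $H$ and the idempotents $\Delta(L_i)$ to $\mathscr N$. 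Everything here is effective: the generating automata, their finitary parts, $\ell$, $H$ (computed by closing under products, which terminates since $H$ is finite), and $\mathscr N$.

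The key claim is a characterization of the orbit relation. We want: $(x,y)\in\mathscr O_G$ if and only if there exist a finite prefix length $n$, words $u,v$ of length $n$ with $x=u x'$ and $y=v y'$, an element $g\in G$ with $u\cdot g=v$ in the appropriate sense, and a nucleus element $h=g@(u,v)\in\mathscr N$ with $(x',y')\in h$.

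The direction ``$\Rightarrow$'' is contraction: for a given $g$, all states at sufficiently deep levels lie in $\mathscr N$. The direction ``$\Leftarrow$'' needs the set $W=\{(u,v,h)\mid \exists g\in G: g@(u,v)=h\}$ to be describable. I expect this to be the main obstacle. The natural approach is to show that, for each $h\in\mathscr N$ and each pair $(i,j)$, the set of pairs $(u,v)$ admitting such a $g$ is a regular language of pairs of finite words, and to construct the finite automaton explicitly.

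Reading $(u,v)$ letter by letter while tracking a state of some element of $G$ is not finite-state in general. Instead I would read in reverse, i.e.\ through the nucleus dynamics. The condition $h=g@(u,v)$ with $|u|=|v|=n$ can be reorganised, using contraction, as a sequence of nucleus elements $h_0=h@\ldots$. More concretely: the set of $g\in G$ for which $g@(u\sigma,v\tau)=h$ can be built by ``lifting'' $h$ through generators. Consider the finite graph on $\mathscr N$ augmented with $S$, where $(a,b)$-transitions come from products $s\cdot k$ with $s$ a generator. Boundedness is exactly what should make the set of relevant pairs $(u,v)$ regular. Off the unique cycle, each generator acts at each level as a finitary element of $H$ or as the identity, so the relation between $u$ and $v$ is generated by finitely many ``carry'' patterns. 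I would attempt an induction on the number of periodic factors, as in the proof of Proposition~\ref{prop:boundednuk}. Alternatively, one can characterise orbits of bounded actions as in Bondarenko–Wächter: $x\sim y$ if and only if they are cofinal up to nucleus elements at every level, following the cycle paths. Either way, the end product should be a Büchi automaton whose states are elements of $\mathscr N$ together with a finite memory, accepting $(x,y)$ exactly when some tail pair is related by a nucleus element and the prefixes are related through a chain realisable in $G$. Saturating by the kernel of $\pi$ via composition then gives $\mathscr O_G$, since composition is $\omega$-regular.

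Once $\mathscr O_G$ is $\omega$-regular, the remaining relations follow from first-order definability and the closure of automatic relations under first-order operations, as stated after the Kuske–Lohrey/Bárány–Kaiser–Rabinovich theorem. Under topological automaticity, the uniform structure $\mathscr U$ is $\omega$-regular, so $\mathscr R_G$ is first-order definable:
\[(x,y)\in\mathscr R_G\iff\forall n\,\exists z:\ (y,z)\in\mathscr O_G\wedge(x,z,n)\in\mathscr U.\]
Then $\mathscr Q_G=\{(x,y)\mid\forall z\,((x,z)\in\mathscr O_G\Rightarrow(z,y)\in\mathscr R_G)\}$, $\mathscr P_G=\mathscr Q_G\cap\mathscr Q_G^{-1}$, $\mathscr S_G$ is given by $\exists z\,((x,z)\in\mathscr O_G\wedge(z,y)\in\mathscr R_G)$, and $\mathscr T_G$ by $\exists z\,((z,x)\in\mathscr R_G\wedge(z,y)\in\mathscr R_G)$. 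All of these constructions are effective.
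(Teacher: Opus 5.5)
\textbf{There is a genuine gap: the regularity of the prefix relation, which is the whole content of the theorem, is never established.}

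Your reduction is correct but essentially restates the problem. Once you know that, for each $h\in\mathscr N$, the set $W_h=\{(u,v)\mid \exists g\in G:\ g@(u,v)=h\}$ is effectively regular, $\mathscr O_G=\bigcup_h W_h\cdot h$ is immediately $\omega$-regular. You correctly flag $W_h$ as the main obstacle, but then leave it open, and none of the routes you sketch closes it:
\begin{itemize}
\item The induction on periodic factors in Proposition~\ref{prop:boundednuk} gives, for each single $g$, a depth beyond which its states lie in $\mathscr N$. That depth is unbounded as $g$ ranges over $G$, so it supplies no finite memory.
\item Reading in reverse does not help by itself. Regularity is invariant under reversal, and backtracking from $h$ through elements $k$ with $k@(a,b)=h$ leaves $\mathscr N$ at once, into a set of candidates that is in general infinite. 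This is no more finite-state than tracking one element forward.
\item ``Finitely many carry patterns'' and the appeal to Bondarenko--W\"achter are heuristics, not an argument.
\end{itemize}

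The missing idea is that your objection to forward reading applies to a single element of $G$, but not to all of $G$ at once. The paper reads $(x,y)$ forward and tracks the entire set $\{g@(u,v)\mid g\in G\}$. This set is encoded as the labels of paths from an input vertex to an output vertex in a finite graph $\Gamma_q$, whose edges are labelled by a fixed finite set $\Xi$: states of the generators, elements of the finite family $H$, and states on the cycles of nucleus elements. The construction runs as follows.
\begin{itemize}
\item Start with a single vertex carrying a loop for each initial state of a generator.
\item Reading $(a,b)$ replaces $\Gamma_q$ by the graph on $\Gamma_q\times\Sigma$ with an edge $(v,a')\to(w,b')$ labelled $s@(a',b')$ for every edge $v\to w$ labelled $s$. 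The new input is $(i,a)$ and the new output is $(o,b)$.
\item Boundedness of the elements of $\Xi$ is exactly what bounds the number of non-trivially labelled edges, independently of the depth.
\item Add shortening edges, and compress paths with finitary labels into single edges; there are finitely many such values since $H$ is finite. Vertices all of whose incident edges are finitary can then be discarded. So the number of vertices is bounded, and only finitely many labelled graphs occur up to isomorphism.
\item Nodes in which input and output coincide, or are joined by a nucleus edge, are made residual.
\end{itemize}
This yields the finite automaton for $\mathscr O_G$ directly, with no separate analysis of $W_h$.

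Your derivation of $\mathscr P_G,\dots,\mathscr T_G$ from $\mathscr O_G$ and $\mathscr U$ by first-order definitions is fine. Your formulas are correct, including $\mathscr T_G$ via $z\in\overline{x\cdot G}\cap\overline{y\cdot G}$.
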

\begin{proof}
  Let $G$ be an inverse semigroup generated by a collection $\mathscr A_1,\dots,\mathscr A_t$ of univalent, injective, bounded automata, let $\Sigma$ be the alphabet of $L$, and let $L_0,\dots,L_n$ denote the collection of all languages $u^{-1}L$ for $u\in\Sigma^*$. Let $(G_{i,j})_{0\le i,j\le n}$ denote the inverse semigroup family corresponding to $G$. By Proposition~\ref{prop:boundednuk}, there is a finite inverse semigroup family $(H_{i,j})_{0\le i,j\le n}$ and a finite collection of automata with a single cycle one of whose states is initial, that comprise the nucleus $(\mathscr N_{i,j})$ of $(G_{i,j})$. Let $\Xi$ denote the union of all states of $\mathscr A_1,\dots,\mathscr A_t$, of all $H_{i,j}$, and of all states on cycles of elements of all $\mathscr N_{i,j}$, all viewed as partial bijections $L_i\to L_j$ for some $i,j$. Note that $\Xi$ is a finite set.

  We construct a graph for an automaton recognizing $\mathscr O_G$, in two stages. To avoid confusion between states in $\Xi$ and states of the automaton recognizing $\mathscr O_G$, we call the latter \emph{nodes}.

  In the first stage, every node $q$ will carry a finite graph $\Gamma_q$, with edges labeled by $\Xi$ and two distinguished vertices respectively called the ``input'' and ``output''. The node $q$ represents the set $G_q\subset\Xi^*$ consisting of labels read along paths in $\Gamma_q$ from the input vertex to the output vertex, and also the relation $\mathscr O_q\coloneqq\{(x,x\cdot g)\mid x\in L,g\in G_q\}$. Implicitly, every edge of $\Gamma_q$ comes with two orientations, and the reverse orientation is labeled by the inverse of the automaton.

  The initial node $q_0$ carries a finite graph consisting of a single vertex and one loop for each initial state of some $\mathscr A_1,\dots,\mathscr A_t$, labeled by that state. We of course have $\mathscr O_G=\mathscr O_{q_0}$.

  To define the transitions from a node $q$, consider its graph $\Gamma_q$, with input and output vertices $i,o$. Form then the graph $\Gamma'$ with vertex set $\Gamma_q\times\Sigma$, and for all $a,b\in\Sigma$ give $\Gamma'$ an edge $(v,a)\to(w,b)$ labeled $s@(a,b)$ whenever $v\to w$ is an edge in $\Gamma_q$ labeled $s$. For all $a,b\in\Sigma$, we put a transition $q\to q'$ with label $(a,b)$, where the node $q'$ carries the graph $\Gamma'$ with input vertex $(i,a)$ and output vertex $(o,b)$.

  Whenever some node $q$ has the property that the input and output vertices either coincide or are connected by an element of the nucleus, we make this node residual.

  Note that the automaton we have constructed, up to now, is infinite; more precisely, it is a tree with $\Sigma^2$ descendants at every node.
  
  We now minimize this automaton. Firstly, we delete all nodes in which the input and output vertices are in different connected components, since they accept the empty relation. In each graph, we only keep the connected component containing both the input and output vertices. This does not affect the accepted relation.

  We then note that, because all transformations in $\Xi$ are bounded, there is a bound on the number of non-identity labels in the graphs $\Gamma_q$ associated with nodes. Indeed if $q$ is at distance $n$ from the root $q_0$ then all labels in $\Gamma_q$ are of the form $s@(u,v)$ for some $u,v\in\Sigma^n$ and some $s\in\Xi$, and every such label appears at most once for given $s,u,v$.

  We introduce ``shortening'' edges in the graphs $\Gamma_q$: whenever there are two consecutive $u\to v$ and $v\to w$ in some graph $\Gamma_q$, with respective labels $g,h\in\Xi$ such that $g h\in\Xi$, we add an edge $u\to w$ with label $g h$. Note that the number of non-trivial edges in each $\Gamma_q$ is still bounded.
  
  We next ``compress'' together all edges with finitary labels. Given any two vertices $u,v$ in some $\Gamma_q$, consider the set of all paths from $u$ to $v$ with a finitary as label. Since the inverse semigroup family $H$ generated by all finitaries is finite, and the set of all labels $F$ along such paths is a subset of $H$, we obtain that $F$ is finite. Let us add to $\Gamma_q$ an edge from $u$ to $v$, labeled $f$, for all $f\in F$. Then paths read in $\Gamma_q$ may be assumed never to contain two consecutive labels in $H$; therefore, every vertex of $\Gamma_q$ should have an edge with a non-finitary label on it, and we can remove from $\Gamma_q$ all vertices all of whose incident edges are finitary (except perhaps the input and output vertices). Since the number of edges in $\Gamma_q$ with a non-identity label is bounded by $B$, the number of vertices of $\Gamma_q$ is also bounded, by $2B+2$ counting both edge's extremity and the input and output vertices. There are therefore a finite number of possibilities for the graph $\Gamma_q$, We identify two nodes if their respective graphs are isomorphic (by an isomorphism preserving labels and input and output vertex).

  We have in this manner produced a finite automaton. Let us check that it accepts $\mathscr O_G$.
  
  On the one hand, consider $x=x_0x_1\dots$ and $y = x\cdot g = y_0y_1\dots$ for some $g\in G$; then there is a product $s_1\cdots s_\ell$ of initial states of various $\mathscr A_i$'s such that $x\cdot s_1\cdots s_\ell=y$, and therefore there is a path labeled $s_1\dots s_n$ in $\Gamma_{q_0}$. There is then a transition $q_0\to q_1$ labeled $(x_0,y_0)$ and a path in $\Gamma_{q_1}$ labeled $(s_1\dots s_\ell)@(x_0,y_0)$ from its input to its output; etc., eventually leading to a path in some $\Gamma_{q_n}$ labeled by $(s_1\dots s_\ell)@(x_0\dots x_n,y_0\dots y_n)$ with all $s_i$ in the nucleus $\mathscr N$. Recall that there is a constant $m$ such that $(s t)@(u,v)$ equals an element of the nucleus for all composable $s,t\in\mathscr N$ and all $u,v\in\Sigma^m$. Thanks to the ``shortening'' edges, we may replace consecutive pairs of letters in $(s_1\dots s_\ell)@(x_0\dots x_{n+m},y_0\dots y_{n+m})$ by single elements of the nucleus, obtaining a path in $\Gamma_{q_{n+m}}$ of length $\lceil\ell/2\rceil$; and repeating this $\lceil\log_2(\ell)\rceil$ times we obtain a path of length $1$ from the input to the output. We have arrived at residual states, so the pair $(x,y)$ is accepted by the automaton.

  Conversely, consider an accepting run for a pair of $\omega$-words $(x,y)$. For $n$ large enough, it will be at a state $q_n$ accepting the $(x_nx_{n+1}\dots,y_ny_{n+1}\dots)$ along a path of length $1$, labeled by a nucleus element. Working backwards, this path may be lifted, by inverting the ``shortening edges'' and ``compressing finitaries'' process, to a path in $\Gamma_{q_{n-1}}$ realizing the transformation $x_{n-1}x_n\dots\mapsto y_{n-1}y_n\dots$, etc., finally arriving at a path in $\Gamma_{q_0}$ which realizes the transformation $x\mapsto y$. By reading the elements of $\Xi$ along this path, we exhibit an element $g\in G$ with $x\cdot g=y$.

  Assume finally that the encoding $L\to X$ is topological, so there are neighbourhoods $(\mathcal U_n)_{n\in\N}$ of $\approx$ in $L\times L$ expressing the topology of $X$. The relations $\mathscr P_G,\mathscr Q_G,\mathscr R_G,\mathscr S_G,\mathscr T_G$ are defined by first-order statements, and are therefore also effectively $\omega$-automatic:
  \begin{align*}
    \mathscr R_G &= \{(x,y)\mid \forall n:\exists z:(x,z)\in\mathcal U_n\wedge (z,y)\in\mathscr O_G\},\\
    \mathscr P_G &= \mathscr R_G\cap\mathscr R_G^{-1},\\
    \mathscr Q_G &= \{(x,y)\mid \forall z:(x,z)\in\mathscr O_G\rightarrow (z,y)\in\mathscr R_G\},\\
    \mathscr S_G &= \{(x,y)\mid \exists z:(x,z)\in\mathscr O_G\wedge (z,y)\in\mathscr R_G\},\\
    \mathscr T_G &= \{(x,y)\mid \exists z:(x,z)\in\mathscr R_G\wedge (z,y)\in\mathscr R_G\}.\qedhere
  \end{align*}
\end{proof}

\begin{cor}\label{cor:infiniteorbit}
  It is decidable, given $G$ an inverse semigroup generated by finitely many bounded automata, whether $G$ has an infinite orbit.
\end{cor}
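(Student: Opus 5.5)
The plan is to combine Theorem~\ref{thm:orbits} with the decidability of first-order logic with cardinality quantifiers over $\omega$-automatic structures (Kuske--Lohrey, B\'ar\'any--Kaiser--Rabinovich). First, from the given finitely many bounded automata $\mathscr A_1,\dots,\mathscr A_t$, the inverse semigroup $G$ they generate is finitely generated and acts boundedly and automatically on the language $L$; here $L$ is the common (closed, safety) language on which the automata act, and can be read off the automata themselves. By Theorem~\ref{thm:orbits}, we can effectively compute a B\"uchi automaton recognizing $\mathscr O_G=\{(x,x\cdot g)\mid x\in L,g\in G\}\subseteq L\times L$. The construction in that proof is explicit: build the finitely many graphs $\Gamma_q$ up to isomorphism, with edge labels in the finite set $\Xi$, which is computable from the nucleus description of Proposition~\ref{prop:boundednuk}. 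So the effectiveness claim amounts to observing that every step there is algorithmic.

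Next, consider the structure $\langle L,\mathscr O_G\rangle$. It is injectively $\omega$-automatically presented, since we regard the orbits on $L$ itself. If the action is on a quotient $X=L/{\approx}$, we instead use the presentation with $\approx$, replacing $\mathscr O_G$ by ${\approx}\,\mathscr O_G\,{\approx}$, which is again $\omega$-regular. The existence of an infinite orbit is expressed by the sentence
\[\exists x:\exists^{\ge\aleph_0}y:(x,y)\in\mathscr O_G,\]
that is, the negation of $\forall x:\bigvee_{\kappa\in\N}\exists^{\kappa}y\ldots$. This is better phrased directly with the infinity quantifier $\exists^{\infty}$, which is among the cardinality quantifiers allowed in the cited theorem. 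Decidability then follows by feeding this sentence and the automata for $L$ and $\mathscr O_G$ to the algorithm.

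The only subtle point, which I expect to be the main obstacle, is checking that the hypotheses of Theorem~\ref{thm:orbits} hold as stated. For inverse semigroups, the orbit of $x$ is $x\cdot G$, so $(x,y)\in\mathscr O_G$ means exactly that $y$ lies in the orbit of $x$. For partial bijections, $\mathscr O_G$ need not be reflexive or symmetric, but $\{y\mid(x,y)\in\mathscr O_G\}=x\cdot G$ regardless, so the sentence above is still correct. Beyond this, the corollary is a direct consequence.
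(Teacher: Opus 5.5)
Your proposal is correct and is essentially the paper's proof: Theorem~\ref{thm:orbits} makes $(L,\mathscr O_G)$ an effectively, injectively $\omega$-automatic structure, and the sentence $\exists x:\exists^{\infty}y:\mathscr O_G(x,y)$ (equivalently with $\exists^{\aleph_0}$, since orbits of a finitely generated $G$ are countable) is then decided by the Kuske--Lohrey algorithm. Two minor remarks: the detour through a quotient $L/{\approx}$ is unnecessary, since the bounded automata act on $L$ itself; and for an inverse semigroup $\mathscr O_G$ is in fact symmetric, because $x\cdot g=y$ implies $y\cdot g^{-1}=x$.
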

\begin{proof}
  By Theorem~\ref{thm:orbits}, the structure $(L,\mathscr O_G)$ is $\omega$-automatic. By Kuske-Lohrey~\cite{kuske-lohrey:counting}, its first order theory is decidable, even when enriched with the quantifier $\exists^\omega$ (``there exists countably many''). Thus the sentence ``$\exists x:\exists^\omega y: \mathscr O_G(x,y)$'' is decidable.
\end{proof}

\begin{cor}\label{cor:infinite}
  It is decidable, given $G$ an inverse semigroup generated by finitely many bounded automata, whether $G$ is infinite.
\end{cor}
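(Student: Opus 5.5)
The plan is to reduce finiteness of $G$ to the existence of an infinite orbit, and then apply Corollary~\ref{cor:infiniteorbit}. This follows the introduction, where the argument of D'Angeli--Francoeur--Rodaro--W\"achter is invoked. One direction is immediate. If some orbit $x\cdot G$ is infinite, then $G$ is infinite, since $x\cdot G$ is the image of the partial map $g\mapsto x\cdot g$ defined on a subset of $G$. So the whole content lies in the converse: if $G$ is infinite, then some orbit is infinite.

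To prove the converse, I will argue by contraposition. Assume every orbit of $G$ on $L$ is finite; I want to show $G$ is finite. The strategy is to exploit the self-similar structure from Proposition~\ref{prop:boundednuk}.

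Since $G$ is finitely generated and bounded, its state-closure family $(G_{i,j})$ is contracting with a finite nucleus $\mathscr N$. Every element of $\mathscr N$ is either in the finite family $H$ of finitaries, or is a single-cycle automaton with finitaries attached.

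The key step is to consider a periodic nucleus element $s$. There is a unique pair $(u,v)$ with $|u|=|v|=\ell$ and $s@(u,v)=s$; all other sections are finitary. Following the cycle, $u^\omega$ is sent by a suitable element of $G$ to $v^\omega$. If $u\neq v$, one builds points whose orbit under powers, or under suitable products, is unbounded. This is the ``odometer-like'' behaviour. A cycle with $u\neq v$ forces a carry: the words $u^k w$ for varying finitary tails $w$ then produce infinitely many distinct images of a single point $x$ passing through $u^\omega$ infinitely often. I expect this to be the main obstacle. The difficulty is that $G$ is only an inverse semigroup, so the relevant power may be undefined, and I must produce one concrete point with infinite orbit rather than infinitely many points.

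In the contrary case, every cycle satisfies $u=v$ at every step. Then each nucleus element acts as the identity on the tail beyond a finitary prefix pattern. From this I would deduce that all sections at level $n\ell$, for $n\ge h$, act within the finite family $H$ on prefixes of bounded length, followed by the identity. Combined with contraction, this means every $g\in G$ is determined by its action on words of a bounded length $N$ together with finitely many section choices. Hence $G$ embeds into a finite inverse semigroup of partial bijections, and $G$ is finite.

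A cleaner alternative for the converse, which I will try first, is the D'Angeli--Francoeur--Rodaro--W\"achter argument directly. Assume all orbits are finite. For each $n$, the action on the finitely many cylinders of level $n$ is then bounded in terms of orbit sizes. Using compactness of $L$, I would extract a uniform bound on orbit sizes, via a Baire argument on the closed sets $\{x:|x\cdot G|\le k\}$, which are closed because the relations are $\omega$-regular. Then, using contraction, I would show that the map $G\to\prod_{\nu\in\mathscr N}$ (action on level-$N$ words) is injective.

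Finally, decidability. Given the equivalence ``$G$ infinite $\iff$ $G$ has an infinite orbit'', the statement follows by deciding the sentence $\exists x:\exists^{\omega}y:\mathscr O_G(x,y)$. This sentence is decidable because $\mathscr O_G$ is effectively $\omega$-automatic by Theorem~\ref{thm:orbits}, and the Kuske--Lohrey theorem then applies.
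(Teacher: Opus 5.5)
Your outer reduction is the paper's. You prove the easy direction, then decide $\exists x\,\exists^{\omega}y\colon\mathscr O_G(x,y)$ via Theorem~\ref{thm:orbits} and Kuske--Lohrey, which is Corollary~\ref{cor:infiniteorbit}. For the hard direction (infinite $\Rightarrow$ some infinite orbit), the paper gives no argument of its own. It invokes the theorem of D'Angeli--Francoeur--Rodaro--W\"achter as a black box and asserts that their proof carries over. The arguments you sketch in its place do not work.

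\textbf{The dichotomy on periodic nucleus elements fails in both branches.}
\begin{itemize}
\item The case $u=v$ does not give finiteness. The Grigorchuk group is bounded, and its only non-trivial cycle $b\to c\to d\to b$ is labelled $\Delta1\,\Delta1\,\Delta1$, so every cycle has $u=v$; yet the group is infinite. Your claim that nucleus elements then act as the identity beyond a bounded prefix is false. Off-cycle sections such as $b@(0,0)=a$ act at unboundedly deep positions along $1^k0\cdots$.
\item The case $u\neq v$ does not give an infinite orbit. Take the one-state automaton with a single loop labelled $(0,1)$. It is bounded, has a cycle with $u=0\neq v=1$, and defines the partial bijection $s\colon 0^\omega\mapsto 1^\omega$. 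Since $s^2=\emptyset$, it generates the finite inverse semigroup $\{s,s^{-1},ss^{-1},s^{-1}s,\emptyset\}$. This is exactly the undefined-powers issue you flagged, and it is fatal rather than technical.
\end{itemize}

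\textbf{The Baire-category alternative also fails as written.}
\begin{itemize}
\item The sets $\{x:|x\cdot G|\le k\}$ are not closed merely because the relations are $\omega$-regular, since $\omega$-regular sets need not be closed. For partial bijections they genuinely need not be closed: in the example above, the set for $k=1$ is $L\setminus\{0^\omega,1^\omega\}$.
\item For group actions by homeomorphisms these sets are closed. Even then, Baire only yields a cylinder on which orbit sizes are bounded, not a uniform bound on all of $L$.
\item The injectivity of the map to actions on level-$N$ words ``using contraction'' is asserted, not argued.
\end{itemize}
A uniform bound $k$ would already suffice, with no contraction needed. $G$ then embeds in a product of copies of the finite symmetric inverse monoid on $k$ points. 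So $G$ is a finitely generated member of a locally finite variety, hence finite.

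The entire difficulty is therefore to pass from ``all orbits finite'' to a uniform bound. That step must exploit the finite-state self-similar structure, and it is the real content of the D'Angeli--Francoeur--Rodaro--W\"achter theorem. As it stands, your proposal leaves the converse unproved. You should either invoke that theorem as the paper does, checking that its proof adapts to partial bijections of $L$, or supply a genuine argument.
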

\begin{proof}
  By D'Angeli-Francoeur-Rodaro-W\"achter~\cite{dangeli-francoeur-rodaro-wachter:orbits}, a semigroup of univalent automata is infinite if and only if it has an infinite orbit.
\end{proof}

\begin{cor}
  It is decidable, given an injective, univalent bounded automaton $\mathscr A$, whether $\mathscr A$ has finite or infinite order.\qed
\end{cor}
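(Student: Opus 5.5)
We reduce to Corollary~\ref{cor:infiniteorbit}. Let $g\colon L\dashrightarrow L$ be the partial bijection defined by $\mathscr A$, and let $G$ be the inverse semigroup generated by $g$, namely by $\mathscr A$ and $\mathscr A^{-1}$. By the Proposition on products and inverses of bounded partial bijections, $\mathscr A^{-1}$ is again bounded, so $G$ is generated by finitely many bounded automata. By Corollary~\ref{cor:infiniteorbit} it is therefore decidable whether $G$ has an infinite orbit. It remains to show that $g$ has infinite order (the cyclic semigroup $\{g,g^2,\dots\}$ is infinite) if and only if $G$ has an infinite orbit.

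\emph{Orbits of $G$ as components.} Since $G$ consists of products of $g$ and $g^{-1}$, the $G$-orbit of $x$ is contained in the connected component of $x$ in the functional graph $x\mapsto x\cdot g$. This graph has in-degree and out-degree at most $1$, because $g$ is univalent and injective. Hence each component is a finite cycle, a finite chain, a one-sided infinite chain, or a bi-infinite chain.

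\emph{Finite order gives finite orbits.} Suppose $g^n=g^{n+m}$ with $n,m\ge1$. A chain component $x_0\to x_1\to\dots\to x_k$ with $k\ge n$ yields a contradiction, as follows. On such a component $x_0\cdot g^n=x_n$ is defined, while $x_0\cdot g^{n+m}$ must equal it. This forces the chain to continue and eventually return to $x_n$, which is impossible in an injective chain. The same argument applied at every point excludes infinite chains. Hence every chain has fewer than $n+1$ vertices, and every cycle has length dividing $m$, so all $G$-orbits are finite.

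\emph{Infinite order gives an infinite orbit.} Suppose $g$ has infinite order. Apply the argument of D'Angeli-Francoeur-Rodaro-W\"achter~\cite{dangeli-francoeur-rodaro-wachter:orbits}, already used in Corollary~\ref{cor:infinite}, to the semigroup $\{g^k\}_{k\ge1}$ of univalent automata. It gives a point $x$ whose forward orbit $\{x\cdot g^k\}$ is infinite, hence the $G$-orbit of $x$ is infinite.

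The only step needing care is this last one. I expect the main obstacle to be checking that the D'Angeli-Francoeur-Rodaro-W\"achter argument applies to partial (not necessarily total) univalent automata acting on a language $L\neq\Sigma^\omega$. The paper already relies on exactly this extension in Corollary~\ref{cor:infinite}, so I take it as given.
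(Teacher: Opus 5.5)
Your proposal is correct and follows essentially the paper's route. The paper states this corollary as an immediate consequence of Corollary~\ref{cor:infinite}, that is, of Corollary~\ref{cor:infiniteorbit} combined with the D'Angeli--Francoeur--Rodaro--W\"achter principle ``infinite if and only if some orbit is infinite'', applied to the inverse semigroup generated by $\mathscr A$. Your chain-and-cycle analysis makes explicit the elementary step the paper leaves unsaid: finite order of $g$ forces every orbit of the inverse semigroup generated by $g$ to be finite.
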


We note that~\cite{bartholdi:domino} already implies decidability of the orbit relation: indeed there is it shown that the monadic second-order logic of a bounded action (in which quantification is over points of the space, and the structure encodes the action of group generators $S$) is decidable. The fact that $x,y\in X$ are in the same $G$-orbit is expressed as the monadic second-order predicate
\[\neg\exists A,B:x\in A\wedge y\in B\wedge A\cap B=\emptyset\wedge\bigwedge_{s\in S} A\cdot s=A\wedge B\cdot s=B.\]
However, the decision procedure involves tree automata rather than Büchi automata.

\section{Examples and illustrations}
We give here some examples of orbit calculations, following Theorem~\ref{thm:orbits}. Applications to complex dynamics will be given in the next section.

\subsection{The Bernoulli shift}\label{ss:bernoulli}

We give a natural example of automatic action of $\Z$ whose orbit relation is \emph{not} automatic: the Bernoulli shift. As a warm-up, the \emph{one-sided} Bernoulli shift on an alphabet $\mathcal A$ with at least two symbols $\zero,\one$ is the space $\mathcal A^\N$, with shift map $S\colon\mathcal A^\N\righttoleftarrow$ given by $S(a_0a_1\dots)=a_1a_2\dots$. The action of $S$ is automatic, and may be given by the following automaton: it has stateset $\mathcal A$, all states are initial and residual, and for all $a,b\in\mathcal A$ there is a transition from $a$ to $b$ labeled $(a,b)$. In effect, when the automaton is in state $a$, it ``checks'' that the input is $a$, ``guesses'' the next letter $b$, and moves to state $b$ so as, later, to check that its guess is correct. 

We consider now the (two-sided) Bernoulli shift. It is the space $\mathcal A^\Z$, with action of $\Z$ given by shifting.
\begin{prop}
  The Bernoulli shift admits an automatic presentation, for which the orbit relation is not automatic.
\end{prop}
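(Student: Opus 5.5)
The plan is to exhibit an explicit ``folded'' encoding of $\mathcal A^\Z$, check that the shift is automatic for it, and then show by a pumping argument that the orbit relation cannot be $\omega$-regular.

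\emph{Encoding.} I take $L=(\mathcal A\times\mathcal A)^\omega$ and encode $x=(x_n)_{n\in\Z}$ by the word whose $n$th letter is $(x_n,x_{-n-1})$ for $n\ge0$. This is a homeomorphism, hence an injective, topologically automatic presentation.

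\emph{The shift is automatic.} For $y=S(x)$ we have $(y_n,y_{-n-1})=(x_{n+1},x_{-n})$. Thus the output at position $n$ uses the first coordinate of input letter $n+1$ and the second coordinate of input letter $n-1$. The only exception is $n=0$, where $x_{-0}=x_0$ is the first coordinate of input letter $0$. A Büchi transducer implements this as in the one-sided warm-up. Its states store a pair (previous second input coordinate, guessed next first input coordinate). It checks each guess on the following letter and emits the appropriate output, and all states are residual. This shows the Bernoulli shift admits an automatic presentation.

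\emph{The orbit relation is not automatic.} Suppose for contradiction that $\mathscr O_\Z$ is $\omega$-regular.
\begin{enumerate}
\item Define the $\omega$-regular set $R$ of pairs $(x,y)$ such that:
\begin{itemize}
\item $x=\delta_0+\delta_m$ is the configuration with $\one$ exactly at positions $0$ and $m$, and $\zero$ elsewhere;
\item $y=\delta_{-a}+\delta_b$, with $1\le a$ and $a-1<b<m$.
\end{itemize}
In the folded encoding, $x$ carries marks at indices $0$ (first coordinate) and $m$ (first coordinate). The word $y$ carries marks at index $a-1$ (second coordinate) and index $b$ (first coordinate). The ordering constraint on these four marked positions is regular.
\item Since the orbit of a two-point configuration is determined by the distance between its points, $(x,y)\in\mathscr O_\Z$ if and only if $a+b=m$. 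Write $k=a-1$ for the gap from index $0$ to the first $y$-mark, and $j=m-b$ for the gap from the last $y$-mark to the last $x$-mark. The condition becomes $j=k+1$.
\item Hence $\mathscr O_\Z\cap R$ consists of words of the shape $u\,c^{k}\,v\,c^{*}\,w\,c^{k}\,z\,(\zero,\zero)^\omega$, with fixed marker letters $u,v,w,z$ and a fixed filler letter $c$. This intersection is $\omega$-regular, by Büchi's closure theorem.
\item Every word in this set is ultimately $(\zero,\zero)^\omega$. Removing that tail gives a regular language of finite words: take the set of prefixes $p$, cut at the last marker, such that $p\,(\zero,\zero)^\omega$ is accepted. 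This language is essentially $\{u c^k v c^* w c^{k} z\}$ up to the fixed offset, which is non-regular by the standard pumping lemma. This is the contradiction.
\end{enumerate}

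The main obstacle is the bookkeeping in steps 2–4. One has to write down the folded indices correctly and check that the orbit condition really turns into equality of the two gaps, which lie on opposite sides of the $y$-mark. One must also justify passing from an $\omega$-regular set of ultimately-constant words to a regular language of finite words. For the latter, I would intersect with the ultimately-$(\zero,\zero)^\omega$ words, which form a regular set, and then read off the finite-word language from the accepting runs of the Büchi automaton.
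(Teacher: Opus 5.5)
Your proposal is correct and follows the paper's proof: the same folded encoding $(x_n,x_{-n-1})_{n\ge0}$, the same guess-and-check transducer for $S$ (the paper's states $(x_n,x_{-n})$, with diagonal initial states, are your stored pair), and a pumping contradiction for $\mathscr O_\Z$. The only difference is the witness family. The paper pumps the B\"uchi run directly on $a^nb^nc^\omega$, which encodes a block of $n$ ones on $[0,n)$ versus on $[n,2n)$; this family stays in first coordinates and spares you the folded-index bookkeeping and the reduction to finite words, though your two-point family, exactly $\{u\,c^{k-1}v\,c^{i}w\,c^{k}z\}\,c^\omega$, works just as well.
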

\begin{proof}
  It is straightforward to encode $\mathcal A^\Z$ into an $\omega$-regular language, by the ``conveyor belt'' trick: set $\Sigma=\mathcal A\times\mathcal A$, and encode $(a_n)_{n\in\Z}$ by the sequence $(a_0,a_{-1})(a_1,a_{-2})\dots$. Thus the map $\Sigma^\omega\to\mathcal A^\Z$ is a bijection.

  The action of $\Z$ on $\mathcal A^\Z$ is generated by the shift $S\colon\mathcal A^\Z\righttoleftarrow$ given by $S((a_n)) = (a_{n+1})_{n\in\Z}$. As a transformation of $\Sigma^\omega$, it is given by the following automaton: it has $\Sigma$ as stateset, all states $(a,a)$ are initial, all states are residual, and there is a transition from $(a,b)$ to $(c,d)$ labeled $((a,d),(c,b))$, for all $a,b,c,d\in\mathcal A$. The run accepting $(a_0,a_{-1})(a_1,a_{-2})\dots\mapsto(a_1,a_0)(a_2,a_{-1})\dots$ passes through states $(a_0,a_0)$, $(a_1,a_{-1})$, $(a_2,a_{-2})$, \dots.

  Assume now for contradiction that the orbit relation $\mathscr O_\Z$ is automatic. For all $n\in\N$, the pair $(\dots\zero\underline{\one^n}\zero\dots,\dots\zero\underline{\zero^n}\one^n\zero\dots)$ is accepted, since these two sequences are in the same $\Z$-orbit. A successful run, for such pairs, must start at an initial state, read repeatedly $a=((\one,\zero),(\zero,\zero))$, then read repeatedly $b=((\zero,\zero),(\one,\zero))$, and finally read infinitely many $c=((\zero,\zero),(\zero,\zero))$. Now after repeatedly reading the $a$, it will enter a loop, namely there is a state $q$ that is reached by reading $a^m$, and also $a^n$ for some $n>m$. Then, from $q$ and reading $b^m$, it will reach a state from which reading $c^\omega$ leads to a residual cycle. This run also accepts $a^n b^m c^\omega$, which is not in $\mathscr O_\Z$, and we have reached our desired contradiction.
\end{proof}

\subsection{The Grigorchuk group}
Consider, on the other hand, the Grigorchuk group $G$ that appeared in~\eqref{eq:grigorchuk}. Its orbit relation is tail equivalence; namely, two sequences are equivalent if and only if they differ in finitely many positions. An automaton accepting $\mathscr O_G$ is
\[\begin{fsa}
    \node[state,initial] (a) at (0,0) {};
    \node[state,accepting by double] (b) at (3,0) {};
    \draw (a) edge[loop below] node[below] {$\{0,1\}\times\{0,1\}$} ()
    edge node[above] {$\{0,1\}\times\{0,1\}$} (b)
    (b) edge[loop right] node[right] {$\Delta0,\Delta1.$} ();
  \end{fsa}
\]  

For a more complicated example, consider the subgroup $H=\langle abab,d,ada\rangle$ of the Grigorchuk group. An automaton accepting $\mathscr O_H$ is
\[\begin{fsa}
    \node[state,initial] (q) at (0,0) {$q$};
    \node[state] (r) at (2.5,0) {$r$};
    \node[state] (s00) at (5,2.25) {$s_{00}$};
    \node[state] (s01) at (5,0.75) {$s_{01}$};
    \node[state] (s10) at (5,-0.75) {$s_{10}$};
    \node[state] (s11) at (5,-2.25) {$s_{11}$};
    \node[state] (t) at (7.5,0) {$t$};
    \node[state,accepting by double] (u) at (10,0) {$u$};
    \node[state,accepting by double] (vb) at (6.5,-2) {$v_b$};
    \node[state,accepting by double] (vc) at (7.5,-2) {$v_c$};
    \node[state,accepting by double] (vd) at (8.5,-2) {$v_d$};
    \draw (q) edge node[above] {$\Delta0,\Delta1$} (r)
    (r) edge node[above,sloped] {$\Delta0$} (s00)
    (r) edge node[above,sloped] {$(0,1)$} (s01)
    (r) edge node[above,sloped] {$(1,0)$} (s10)
    (r) edge node[above,sloped] {$\Delta1$} (s11)
    (s00) edge node[above,sloped] {\tiny $\{0,1\}\times\{0,1\}$} (t)
    (s01) edge node[above,sloped] {\tiny $\{0,1\}\times\{0,1\}$} (t)
    (s10) edge node[above,sloped] {\tiny $\{0,1\}\times\{0,1\}$} (t)
    (s11) edge node[above,sloped] {\tiny $\{0,1\}\times\{0,1\}$} (t)
    (t) edge[loop above] node[above] {\tiny $\{0,1\}\times\{0,1\}$} ()
    edge node[sloped] {$\Delta1$} (vb)
    edge node[sloped] {$\Delta1$} (vc)
    edge node[sloped] {$\Delta1$} (vd)
    edge node[above] {$\Delta0,\Delta1$} (u)
    (u) edge[loop right] node {$\Delta0,\Delta1$} ()
    (vb) edge[loop below] node[below] {$\Delta1$} ()
    (vc) edge[loop below] node[below] {$\Delta1$} ()
    (vd) edge[loop below] node[below] {$\Delta1$} ();
  \end{fsa}
\]  
The graphs $\Gamma_q,\dots$ represented at the nodes are as follows:
\[\Gamma_q=\begin{fsa}[baseline=0]
    \node[state,initial,accepting] (a) at (0,0) {};
    \draw (a) edge[loop above] node {$abab$} ()
    (a) edge[out=-45,in=-75,loop] node[below] {$d$} (a)
    (a) edge[out=-105,in=-135,loop] node[below] {$ada$} (a);
  \end{fsa};\qquad
  \Gamma_r = \begin{fsa}[baseline=0]
    \node[state,initial,accepting] (a) at (0,0) {};
    \draw (a) edge[loop above] node {$ca$} ()
    (a) edge[loop below] node[below] {$b$} ();
  \end{fsa};\qquad
  \Gamma_{s_{01}} = \begin{fsa}[baseline=0]
    \node[state,initial] (a0) at (0,1) {};
    \node[state,accepting] (a1) at (0,-1) {};
    \draw (a0) edge[loop above] node {$a$} ()
    (a0) edge[bend left=30] node[right] {$a$} (a1)
    (a1) edge[loop below] node[below] {$c$} ()
    edge [bend left=30] node[left] {$d$} (a0);
  \end{fsa}
\]
and similarly for $s_{00},s_{10},s_{11}$ with different input and output states; and
\[\Gamma_t=\begin{fsa}[baseline=0]
    \node[state,initial,accepting] (a) at (0,0) {};
    \draw (a) edge[loop above] node {$a$} ()
    (a) edge[out=-45,in=-75,loop] node[below] {$d$} (a)
    (a) edge[out=-105,in=-135,loop] node[below] {$b$} (a);
  \end{fsa};\qquad
  \Gamma_u = \begin{fsa}[baseline=0]
    \node[state,initial,accepting] (a) at (0,0) {};
  \end{fsa};\qquad
  \Gamma_{v_b} = \begin{fsa}[baseline=0]
    \node[state,initial,accepting] (a) at (0,0) {};
    \draw (a) edge[loop above] node {$b$} ();
  \end{fsa}
\]
and similarly for $v_c,v_d$.

\subsection{The adding machine}
Recall the automaton adding $1$ in base $d$, from~\eqref{eq:adder}; it is the iterated monodromy group of the polynomial $z^d$. The orbits of $\Z$, acting on sequences in $\{\ell_{0/d},\dots,\ell_{(d-1)/d}\}^\omega$, are cofinality classes, except that the classes cofinal to $\ell_{0/d}^\omega$ and $\ell_{(d-1)/d}^\omega$ are in the same orbit. Thus an automaton for the orbit relation is
\[\begin{fsa}
    \node[state,initial] (q) at (0,0) {};
    \node[state,accepting by double] (r) at (3,0) {};
    \node[state,accepting by double] (s+) at (-1,-1) {};
    \node[state,accepting by double] (s-) at (1,-1) {};
    \draw (q) edge[loop above] node[above] {$\{(\ell_{i/d},\ell_{j/d})\}$} ()
    edge node[left] {$\{(\ell_{i/d},\ell_{j/d})\}$} (s+)
    edge node[right] {$\{(\ell_{i/d},\ell_{j/d})\}$} (s-)
    edge node[above] {$\{(\ell_{i/d},\ell_{j/d})\}$} (r)
    (r) edge[loop right] node[right] {$(\ell_{i/d},\ell_{i/d})\}$} ()
    (s+) edge[loop left] node[left] {$(\ell_{0/d},\ell_{(d-1)/d})\}$} ()
    (s-) edge[loop right] node[right] {$(\ell_{(d-1)/d},\ell_{0/d})\}$} ();
  \end{fsa}
\]

\subsection{The nega-Zeckendorf and Fibonacci subshifts}
We begin with the nega-Zeckendorf representation, and the automaton~\eqref{eq:negazeckendorf}. It has the advantage that its states know the next input and output symbols; thus the states representing $t$ are actually $t_{10},t_{01},t_{00}$, and the identities are actually $1_{00},1_{11}$. Starting from the initial state represented by the graph $\Gamma_0$ with a single vertex and loops $t_{00},t_{01},t_{10}$, we follow the automaton~\eqref{eq:negazeckendorf} to obtain the successors of $q_0$: at the next step, the graph is
\[\Gamma_1:\begin{fsa}[baseline=0]
    \node[state] (0) at (0,0) {$0$};
    \node[state] (1) at (3,0) {$1$};
    \draw (0) edge[loop left] node {$u_{10}$} ()
    (0) edge[bend left=15] node {$1_{00}$} (1)
    (1) edge[bend left=15] node {$u_{00},u_{01}$} (0);
  \end{fsa}\]
with choices of input and output states determined by the transition from $q_0$. One step further, we see
\[\Gamma_2:\begin{fsa}[baseline=0]
    \node[state] (00) at (0,1) {$00$};
    \node[state] (01) at (0,-1) {$01$};
    \node[state] (10) at (3,0) {$10$};
    \draw (01) edge[bend left] node {$1_{00}$} (00)
    (00) edge[bend left=15] node[sloped] {$1_{00},1_{11}$} (10)
    (10) edge[bend left=15] node[sloped,below] {$t_{01}$} (00)
    (10) edge[bend left=15] node[sloped,below] {$t_{00},t_{10}$} (01);
  \end{fsa}\]
and now the edge labeled $1_{00},1_{11}$ carries the identity so may be contracted, leading to the previous graph $\Gamma_1$, except that the edges are labeled by its inverses. Furthermore, the states $10$ and $01$ are connected by an element on a cycle; so the automaton accepting $\mathscr O_{\langle t\rangle}$ should transition either to a residual state recognizing the identity, or to a residual state recognizing the cycle of $t_{10}$. The automaton we arrive at, after merging $\Gamma_0$ and $\Gamma_1$ and distinguishing all $\Gamma_1$'s by their input and output state, is
\[\begin{fsa}
    \node[state,initial] (11) at (3,3) {$11$};
    \node[state,initial] (00) at (3,1) {$00$};
    \node[state,initial left] (01) at (2,-1) {$01$};
    \node[state,initial right] (10) at (4,-1) {$10$};
    \node[state,accepting by double] (i0) at (6,2) {};
    \node[state,accepting by double] (i1) at (6,0) {};
    \node[state,accepting by double] (x01) at (4,-3) {};
    \node[state,accepting by double] (x10) at (2,-3) {};
    \draw (11) edge[bend left=15] node[sloped] {$\Delta1$} (00)
          (11) edge node[sloped] {$\Delta1$} (i0)
          (00) edge[loop left] node[left] {$\Delta0$} (00)
          (00) edge[bend left=15] node[sloped] {$\Delta0$} (11)
          (00) edge[bend left=15] node[sloped] {$\Delta0$} (01)
          (00) edge[bend left=15] node[sloped] {$\Delta0$} (10)
          (00) edge node[sloped] {$\Delta0$} (i0)
          (00) edge node[sloped] {$\Delta0$} (i1)
          (10) edge[bend left=15] node[sloped] {$(1,0)$} (00)
          (10) edge[bend left=15] node[sloped] {$(1,0)$} (01)
          (10) edge node[sloped] {$(1,0)$} (x01)
          (01) edge[bend left=15] node[sloped] {$(0,1)$} (00)
          (01) edge[bend left=15] node[sloped] {$(0,1)$} (10)
          (01) edge node[sloped] {$(0,1)$} (x10);
    \draw (i0) edge[loop above] node {$\Delta0$} ()
          (i0) edge[bend left=15] node {$\Delta0$} (i1)
          (i1) edge[bend left=15] node {$\Delta1$} (i0);
    \draw (x01) edge[bend left=15] node {$(0,1)$} (x10)
          (x10) edge[bend left=15] node[above] {$(1,0)$} (x01);
  \end{fsa}
\]

The orbit automaton for the Fibonacci action is quite similar; it also recognizes cofinal sequences, and identifies the sequences $(13)^\omega$ with $(21)^\omega$ and $(31)^\omega$ with $(12)^\omega$. It is just too big to fit nicely on a page (having non-residual states $\{1,2,3\}\times\{1,2,3\}$ and seven residual states).

\section{Complex dynamics}\label{ss:cx}
\let\cev=\overleftarrow 
The iterated monodromy groups presented in~\S\ref{ss:img} are part of a general theory developed by Nekrashevych (see~\cite{nekrashevych:ssg}), and establishing an equivalence between contracting self-similar groups and expanding self-maps. There is indeed a reverse construction, the \emph{limit space} of a contracting group, which we briefly summarize. Consider a contracting self-similar group $G$ acting on a rooted $\Sigma^*$. Let $\Sigma^{-\omega}$ denote the space of left-infinite words, and define the following equivalence relation $\sim$ on $\Sigma^{-\omega}$:
\begin{multline*}
  (x_n)_{n\le0}\sim(y_n)_{n\le0}\text{ if and only if }\exists(g_n)_{n\le0}\text{ in $G$ with }\\
  \#\{g_n:n\le0\}<\infty\text{ and }\forall n:g_n(x_n\dots x_0)=y_n\dots y_0.
\end{multline*}
Letting $\mathscr N$ denote the nucleus of $G$, this is equivalent to ``$\exists (g_n)\text{ in }\mathscr N:\forall n:g_n(x_n\dots x_0)=y_n\dots y_0$''. Therefore, $\sim$ is a closed equivalence relation.

Restricting ourselves to automata acting on a rooted tree $\Sigma^*$ rather than on $\Sigma^\omega$ is a convenience; all we need is the action to be by homeomorphisms, so that the automata representing them accept closed languages, and therefore may be assumed to have all states residual.

The quotient $\Sigma^{-\omega}/{\sim}$ is called the \emph{limit space} $\mathcal J(G)$ of $G$. It is a metrizable topological space of finite covering dimension, connected and locally connected as soon as $G$ is finitely generated. The one-sided shift map on $\Sigma^{-\omega}$ induces a $\#\Sigma$-to-$1$ expanding map $f$ on $\mathcal J(G)$, and (under an additional assumption about germs of the action of $G$) the iterated monodromy group of $(\mathcal J(G),f)$ recovers $G$ itself. This holds, for example, for the example of the Basilica polynomial and group given in~\S\ref{ss:img}. Indeed, the limit space $\mathcal J(G)$, when $G$ is the iterated monodromy group of a post-critically finite complex rational map $f$, is homeomorphic to the \emph{Julia set} of $f$, namely the set of points $z\in\widehat\C$ at which the forward orbit $(z,f(z),f(f(z)),\dots)$ does not vary continuously.

In the parlance of~\S\ref{ss:automatic}, the limit space $\mathcal J(G)$, with its self-map $f$, admits an automatic presentation. Let us indeed define, for an automaton $\mathscr A$, its reverse $\cev{\mathscr A}$ as the automaton with same stateset as $\mathscr A$ but with arrows in reverse direction. (We ignore the initial and residual states). Then the equivalence relation $\sim$ defined above is coded by $\cev{\mathscr N}$, the reverse of the nucleus of $G$. It follows that $\mathscr J(G)$ is presented as $\Sigma^\omega/\cev{\mathscr N}$, with the usual topology on $\Sigma^\omega$ (which is automatic, see the remarks at the end of~\S\ref{ss:automatic}), and the shift map on $\Sigma^\omega$, which is also given by an automaton.

It is possible to extend the theory of limit spaces to the setting of a contracting self-similar inverse semigroup $G$ acting an $\omega$-regular language $L$, when $L$ is closed; see~\cite{nekrashevych:smale} for its geometric development. In the language of automata, one considers the collection $L_0=L,\dots,L_n$ of languages of the form $u^{-1}L$ as the stateset of an automaton, with an edge labeled $\sigma$ from $L_i$ to $L_j$ whenever $L_j=\sigma^{-1}L_i$; then $L$ is naturally identified with the space of right-infinite paths that start at $L_0$ in this graph.

Let us consider in more detail the situation of a post-critically finite complex rational map $f$ and its iterated monodromy group $G$, which is a contracting self-similar group. If $f$ is a polynomial, then there is a choice of spider $\Sigma$ such that $G$ is bounded; and conversely, if $G$ is bounded then then the Julia set of $f$ has \emph{local cut points}, namely there is a dense set of points whose complement is not locally connected. (It is often difficult to find a good spider, but polynomials always admit a good one, obtained by choosing the basepoint and the spider inside the Hubbard tree).

If $G$ is a bounded self-similar group, then it then follows from Theorem~\ref{thm:orbits} that the orbit relation $\mathscr O_G$ is automatic, and moreover that this holds for all finitely generated subgroups of $G$. A simple, yet interesting example of such subgroup $\langle g_z\rangle$, where $g_z\in G$ is an element that surrounds once a periodic critical point $z\in\widehat\C$; this point is the center of a \emph{Fatou component}, and the Julia set contains a topological circle which is the boundary of this Fatou component.

Now, we can always put ourselves in a situation in which $\langle g_z\rangle$ is a bounded, contracting group. Up to replacing $f$ by $f^m$ for $m=$ the period of $z$, we may assume that $z$ is a fixed point for $f$; this does not change the Julia set nor the group $G$, though it will now act on the isomorphic language $(\Sigma^m)^\omega$. Choose the basepoint $*$ very close to $z$, so $f^{-m}(*)$ contains $d$ points $*_1,\dots,*_d$ very close to $z$, where $d$ is the local degree of $f^m$ at $z$.

We may now perform a \emph{change of spider}: in the spider $\Sigma^m$, there will be $d$ elements $\ell_1,\dots,\ell_d$ that connect $*$ to respectively $*_1,\dots,*_d$. For some elements $g_1,\dots,g_d\in G$, their representatives $\tilde g_1,\dots,\tilde g_d$ in $\pi_1(\mathscr M,*)$ will have the property that all $\ell_i'\coloneqq\ell_i\#\tilde g_i$ are paths that remain close to $z$, more precisely that remain in the Fatou component of $z$. Furthermore, for other spider legs $\ell_j$ that connect $*$ to a preimage $*_j$ very close to a preperiodic preimage of $z$, modify them as $\ell_j'\coloneqq\ell_j\#\tilde g_j$ for some $\tilde g_j$ in such a manner that all spider legs approaching a given preimage of $z$ remain close to each other. Let $\Sigma'$ be the spider for $f^m$ in which the paths $\ell_i$ have all been replaced by $\ell_i',\dots,\ell_d'$.

On the one hand, this guarantees that, if $g_z$ is now a loop based at $*$ that surrounds once $z$ and remains in $z$'s Fatou component, then the action of $g_z$ is bounded. Indeed the automaton representing the action on $\{\ell_1',\dots,\ell_d'\}^\omega$ is precisely given by~\eqref{eq:adder}; while the action on other sequences is finitary, because the lifts of $z$ distinct from $z$ itself are preperiodic and we chose the spider legs so that their action is finitary.

On the other hand, the group $G$ also acts on $(\Sigma')^\omega$ as a self-similar contracting group, and there is an automatic homeomorphism $h\colon(\Sigma')^\omega\to(\Sigma^m)^\omega$ that conjugates one action to the other~\cite{nekrashevych:ssg}*{Corollary~2.11.7}. We have proven:
\begin{prop}
  Let $g_z$ be any element of $G$ that may be represented by a loop surrounding a single periodic critical point; then the orbit relation $\mathscr O_{\langle g_z\rangle}$ is automatic.\qed
\end{prop}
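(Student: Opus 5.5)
The plan is to reduce the statement to Theorem~\ref{thm:orbits}, applied to the cyclic group $\langle g_z\rangle$. For this we need an encoding of the Julia set in which $g_z$ acts by a bounded automaton, and we then transport the result back to the original encoding through an automatic conjugacy.

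First, replace $f$ by $f^m$, where $m$ is the period of $z$. This leaves $G$ and the Julia set unchanged and replaces the alphabet $\Sigma$ by $\Sigma^m$, which is an automatic recoding of $L=\Sigma^\omega$ as $(\Sigma^m)^\omega$. Next, perform the change of spider described above:
\begin{itemize}
\item choose the basepoint $*$ inside the Fatou component of $z$;
\item replace the legs $\ell_1,\dots,\ell_d$ ending near $z$ by $\ell_i'=\ell_i\#\tilde g_i$, which stay in that Fatou component;
\item modify the legs ending near the other preimages of $z$ so that legs approaching a common preimage stay close to each other.
\end{itemize}
With respect to the new spider $\Sigma'$, take $g_z$ to be a loop in the Fatou component surrounding $z$ once. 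On $\{\ell_1',\dots,\ell_d'\}^\omega$ its action is the odometer~\eqref{eq:adder}, so its automaton has exactly one cycle. Every other state of the automaton is finitary, because those states correspond to lifts of $g_z$ around strictly preperiodic preimages of $z$, and after finitely many steps these lifts become trivial. Hence $g_z$ and $g_z^{-1}$ are bounded, and $\langle g_z\rangle$ acts boundedly and automatically on $(\Sigma')^\omega$. Theorem~\ref{thm:orbits} then shows that $\mathscr O_{\langle g_z\rangle}\subseteq(\Sigma')^\omega\times(\Sigma')^\omega$ is effectively $\omega$-automatic.

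To return to the original encoding, use the automatic homeomorphism $h\colon(\Sigma')^\omega\to(\Sigma^m)^\omega$ conjugating the two actions of $G$, from~\cite{nekrashevych:ssg}*{Corollary~2.11.7}. Since $h$ commutes with the action, the orbit relation in the original coding is $(h^{-1}\times h^{-1})$ applied to $\mathscr O_{\langle g_z\rangle}$, that is, the relation composition $h^{-1}\cdot\mathscr O_{\langle g_z\rangle}\cdot h$. Compositions of $\omega$-regular relations are $\omega$-regular, so this relation is automatic. Finally, undo the recoding $\Sigma^m\leftrightarrow\Sigma$, which is itself automatic.

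The hard step is making the spider change rigorous. It must guarantee that every state of $g_z$ outside the central cycle is finitary. This needs the control of lifts of the loop near the preperiodic preimages of $z$, which is exactly why those legs were bundled together. I would argue this with the Hubbard tree: along any path of preimages that leaves $z$, the corresponding lift of $g_z$ eventually encircles no postcritical point and so becomes trivial. If $g_z$ is only conjugate in $G$ to such a loop, say $g_z=k g_0k^{-1}$, then the orbit relations of $g_z$ and $g_0$ are related by composing with the automatic relation of $k$ on both sides, and the conclusion transfers.
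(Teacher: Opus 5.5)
Your proposal is correct and follows the paper's own argument essentially step for step: pass to $f^m$, change the spider so that $g_z$ acts by the odometer~\eqref{eq:adder} on $\{\ell_1',\dots,\ell_d'\}^\omega$ and finitarily elsewhere, apply Theorem~\ref{thm:orbits} to $\langle g_z\rangle$, and pull back along Nekrashevych's automatic conjugacy $h$. Your explicit composition $h^{-1}\cdot\mathscr O_{\langle g_z\rangle}\cdot h$ and the reduction $g_z=kg_0k^{-1}$ for a loop based elsewhere only spell out steps that the paper leaves implicit.
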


If $\mathscr O_H$ is an automaton representing the orbit relation for a group $H$, then $\cev{\mathscr O}$ is an equivalence relation: indeed $(x,y)\in\cev{\mathscr O_H}$ if and only if there exists $(h_n)_{n\le0}$ in $H$ with $h_n(x_n\dots x_0)=y_n\dots y_0$, and this last condition clearly defines an equivalence relation. In the case of $H=\langle g_z\rangle$, this equivalence relation $\cev{\mathscr O}$ says: ``$x,y$ are on the boundary of a preimage of the Fatou component of $z$''. Furthermore, there is a ``main action'' of $H$ on $\{\ell_1',\dots,\ell_d'\}^\omega$ that is encoded by an automatic relation, and defines the relation ``$x,y$ are on the boundary of the Fatou component of $z$''. Even better, every Fatou component that is a preimage of the Fatou component of $z$ is identified by an address in $(\Sigma'\setminus\{\ell'_1,\dots,\ell'_d\})(\Sigma')^*$, by tracking which iterated preimage of $*$ it contains. We have obtained:
\begin{cor}\label{cor:fatou}
  For every periodic critical point $z$, the relation ``$x,y$ are on the boundary of a preimage of the Fatou component of $z$'' is effectively automatic. For every Fatou component $F$, the relation ``$x,y$ are on the boundary of $F$'' is effectively automatic.\qed
\end{cor}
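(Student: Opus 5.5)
We prove Corollary~\ref{cor:fatou} by chaining the preceding proposition with Theorem~\ref{thm:orbits}, reversing automata, and then using first-order definability.

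First, for a periodic critical point $z$ we use the change of spider described above. We replace $f$ by $f^m$, where $m$ is the period of $z$, and choose legs $\ell'_1,\dots,\ell'_d$ staying in the Fatou component of $z$. With these legs, $H=\langle g_z\rangle$ acts boundedly on $(\Sigma')^\omega$: it acts as the adder~\eqref{eq:adder} on $\{\ell'_1,\dots,\ell'_d\}^\omega$ and finitarily elsewhere. Theorem~\ref{thm:orbits} then yields an explicit B\"uchi automaton $\mathscr O_H$ for the orbit relation, and the construction in its proof is effective. We transport everything back to the original encoding through the automatic conjugacy $h\colon(\Sigma')^\omega\to(\Sigma^m)^\omega$. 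This is harmless, since composition with an automatic bijection preserves $\omega$-regularity.

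Second, we pass to the limit space. Following the remark after the proposition, we form the reversed automaton $\cev{\mathscr O_H}$. It accepts $(x,y)$ exactly when there are $h_n\in H$, taken from the finitely many states of the orbit automaton, with $h_n(x_n\dots x_0)=y_n\dots y_0$ for all $n$. We then show that this relation, saturated by the limit-space equivalence $\cev{\mathscr N}$, is precisely ``$x,y$ lie on the boundary of the same preimage of the Fatou component $U_z$''. Here saturation means composing with $\cev{\mathscr N}$ on both sides, which keeps it $\omega$-regular.

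\textbf{The main obstacle} is justifying this geometric identification, for which we argue in two steps.
\begin{itemize}
\item A preimage $f^{-k}(U_z)$ is selected by its address, a word $w\in(\Sigma'\setminus\{\ell'_1,\dots,\ell'_d\})(\Sigma')^*$ recording which iterated preimage of $*$ it contains.
\item On the tail after $w$, the action of $g_z$ restricted to $\{\ell'_i\}^\omega$ is the $d$-adic odometer. Its orbit closure parametrizes $\partial U_z\cong\R/\Z$ via $d$-ary angles, exactly as in the $z^d$ example.
\end{itemize}
Together these give one inclusion. For the converse, we use contraction: the sequence $(h_n)$ eventually lives in the nucleus, and we check that nucleus elements of $H$ only ever relate points on a common boundary circle. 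This step requires care with points lying on several boundaries, that is, with identifications coming from $\cev{\mathscr N}$. We would handle these by saturating on both sides and checking against the Hubbard tree picture.

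Finally, for an arbitrary Fatou component $F$, we use the fact that for a post-critically finite polynomial $F$ is eventually mapped onto a periodic component containing a periodic critical point $z$. So $F$ is some preimage of $U_z$ with a specific finite address $w$. The relation ``$x,y\in\partial F$'' is then obtained from the previous relation by intersecting with the regular condition that both coded points have address prefix $w$, or more invariantly by a first-order definition using the automatic shift map. By the remark that first-order definable predicates in an $\omega$-automatic structure are effectively automatic, both relations in Corollary~\ref{cor:fatou} are effectively $\omega$-regular.
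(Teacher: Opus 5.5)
\textbf{Overall.} Your outline is the paper's route: a change of spider so that $\langle g_z\rangle$ is bounded, then Theorem~\ref{thm:orbits}, then reversal of the closed orbit automaton, then addresses to single out components. Saturating by the limit-space identification is a sensible addition. However, two of the steps you supply would fail as written.

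\textbf{The converse inclusion.} Your converse argument rests on a false premise. The witnesses $(h_n)$ for the reversed orbit relation must be allowed to be \emph{unbounded}, exactly as in the paper's description ``there exists $(h_n)_{n\le0}$ in $H$''. If they all lay in a finite set such as the nucleus, then $x,y$ would be asymptotically equivalent, hence would code the \emph{same} point of the Julia set. Your relation would then be a subrelation of the limit-space identification, not ``same boundary''.

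This already happens for the adding machine~\eqref{eq:adder}. Its nucleus $\{-1,0,1\}$ only identifies the two codes of a single point of the circle. By contrast, the reversed orbit relation is the full relation on $\{\ell'_1,\dots,\ell'_d\}^{-\omega}$, i.e.\ all of $\partial U_z$, and it is witnessed by unbounded powers of $g_z$ along the loop at the initial node.

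In general, the relevant left-infinite runs keep returning to root-type nodes whose graphs represent $H$ itself. In the Basilica example the reversed automaton cycles $r\leftrightarrow s$, where $s$ is the initial node of $\mathscr O_{\langle a\rangle}$. So ``$(h_n)$ eventually lives in the nucleus'' is false, and you must drop the finiteness implicit in ``taken from the finitely many states''.

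What the converse actually needs is geometric input, not contraction:
\begin{itemize}
\item The $\langle g_z\rangle$-orbits on level $k$ of the tree of preimages are exactly the sets $f^{-k}(*)\cap F'$, for $F'$ ranging over the components of $f^{-k}(U_z)$. This holds because each $F'$ is a disk mapped onto $U_z$ branched only over $z$, and $g_z$ generates $\pi_1(U_z\setminus\{z\},*)$.
\item Fatou components of large depth are small.
\end{itemize}

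\textbf{The final step.} Your last step is also incorrect as stated. Intersect the common-boundary relation with the condition that both codes, in the reading order of the reversed automaton, begin with $w$. This keeps every pair of points on the boundary of a deeper component with address $wv$, since their codes also begin with $w$, and such a boundary is not contained in $\partial F$.

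The paper instead constrains the \emph{run}. The address is read along the diagonal part of the run, which must then immediately enter the part recognizing the main relation on $\partial U_z$. Equivalently, one takes pairs $(wx',wy')$ with $x',y'$ codes of points of $\partial U_z$. These come from the main action of $\langle g_z\rangle$ on $\{\ell'_1,\dots,\ell'_d\}^\omega$, which you mention but never use.

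Your alternative, a first-order definition through the shift, needs the same ingredient: a unary predicate for $\partial U_z$. For instance, one could take the points whose code begins with $w$ and whose image under $f^{|w|}$ lies on $\partial U_z$.
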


Let us illustrate this with some computations for the Basilica map $f(z)=z^2-1$ and its iterated monodromy group $G$; the automaton was given in~\eqref{eq:basilica}. The orbit relation $\mathscr O_G$ is given by
\[\begin{fsa}
    \node[rectangle,draw,initial] (q) at (0,0) {\begin{fsa}\node[state,initial,accepting] (qq) at (0,0) {}; \draw (qq) edge[loop above] node {$a$} () edge[loop below] node {$b$} ();\end{fsa}};
    \node[rectangle,draw,double] (x) at (5,0) {\begin{fsa}\node[state,initial,accepting] at (0,0) {};\end{fsa}};
    \node[rectangle,draw,double] (a) at (-4,-3) {\begin{fsa}\node[state,initial,accepting] (qa) at (0,0) {}; \draw (qq) edge[loop above] node {$a$} ();\end{fsa}};
    \node[rectangle,draw,double] (b) at (-1,-3) {\begin{fsa}\node[state,initial,accepting] (qb) at (0,0) {};\draw (qq) edge[loop below] node {$b$} ();\end{fsa}};
    \node[rectangle,draw,double] (A) at (2,-3) {\begin{fsa}\node[state,initial,accepting] (qa) at (0,0) {}; \draw (qq) edge[loop above] node {$a$} ();\end{fsa}};
    \node[rectangle,draw,double] (B) at (5,-3) {\begin{fsa}\node[state,initial,accepting] (qb) at (0,0) {};\draw (qq) edge[loop below] node {$b$} ();\end{fsa}};
    
    \draw (q) edge[loop above,looseness=4] node {$\Sigma\times\Sigma$} (q)
          (q) edge node[sloped] {$\Sigma\times\Sigma$} (a.north)
          (q) edge node[sloped] {$\Sigma\times\Sigma$} (b)
          (q) edge node[sloped] {$\Sigma\times\Sigma$} (A)
          (q) edge node[sloped] {$\Sigma\times\Sigma$} (B.north)
          (q) edge node {$\Sigma\times\Sigma$} (x);
    \draw (a) edge[bend left] node {$(\ell_{-*},\ell_*)$} (b);
    \draw (b) edge[bend left] node {$(\ell_{-*},\ell_{-*})$} (a);
    \draw (A) edge[bend left] node {$(\ell_*,\ell_{-*})$} (B);
    \draw (B) edge[bend left] node {$(\ell_{-*},\ell_{-*})$} (A);
    \draw (x) edge[loop above] node {$\Delta(\Sigma)$} ();
  \end{fsa}\]

Consider now the subgroup $H=\langle a\rangle$. The $H$-orbit relation is
\[\begin{fsa}
    \node[rectangle,draw,initial] (ia) at (0,0) {\begin{fsa}\node[state,initial,accepting] (qia) at (0,0) {}; \draw (qia) edge[loop above] node {$a$} ();\end{fsa}};
    \node[rectangle,draw] (ib) at (4,0) {\begin{fsa}\node[state,initial,accepting] (qib) at (0,0) {}; \draw (qib) edge[loop above] node {$a$} ();\end{fsa}};
    \node[rectangle,draw,double] (x) at (8,0) {\begin{fsa}\node[state,initial,accepting] at (0,0) {};\end{fsa}};
    \node[rectangle,draw,double] (a) at (-2,-3) {\begin{fsa}\node[state,initial,accepting] (qa) at (0,0) {}; \draw (qq) edge[loop above] node {$a$} ();\end{fsa}};
    \node[rectangle,draw,double] (b) at (1,-3) {\begin{fsa}\node[state,initial,accepting] (qb) at (0,0) {};\draw (qq) edge[loop below] node {$b$} ();\end{fsa}};
    \node[rectangle,draw,double] (A) at (4,-3) {\begin{fsa}\node[state,initial,accepting] (qa) at (0,0) {}; \draw (qq) edge[loop above] node {$a$} ();\end{fsa}};
    \node[rectangle,draw,double] (B) at (7,-3) {\begin{fsa}\node[state,initial,accepting] (qb) at (0,0) {};\draw (qq) edge[loop below] node {$b$} ();\end{fsa}};
    
    \draw (ia) edge[bend left] node {$\Sigma\times\Sigma$} (ib)
          (ia) edge node[sloped] {$\Sigma\times\Sigma$} (a.north)
          (ia) edge node[sloped] {$\Sigma\times\Sigma$} (b)
          (ia) edge node[sloped] {$\Sigma\times\Sigma$} (A)
          (ia) edge node[sloped] {$\Sigma\times\Sigma$} (B.north);
    \draw (ib) edge[bend left] node {$(\ell_*,\ell_*)$} (ia)
          (ib) edge node {$(\ell_{-*},\ell_{-*})$} (x);
    \draw (a) edge[bend left] node {$(\ell_{-*},\ell_*)$} (b);
    \draw (b) edge[bend left] node {$(\ell_{-*},\ell_{-*})$} (a);
    \draw (A) edge[bend left] node {$(\ell_*,\ell_{-*})$} (B);
    \draw (B) edge[bend left] node {$(\ell_{-*},\ell_{-*})$} (A);
    \draw (x) edge[loop above] node {$\Delta(\Sigma)$} ();
  \end{fsa}\]

To determine the relation ``belonging to the boundary of the same Fatou component'', we consider the closure of $\mathscr O_H$, and reverse the direction of the arrows, giving the relation
\[\begin{fsa}
    \node[state,initial] (q) at (0,0) {$q$};
    \node[state] (r) at (3,0) {$r$};
    \node[state] (s) at (6,0) {$s$};
    \draw (q) edge[loop above] node {$\Delta(\Sigma)$} ()
          (q) edge node {$(\ell_{-*},\ell_{-*})$} (r);
    \draw (r) edge[bend left] node {$\Sigma\times\Sigma$} (s);
    \draw (s) edge[bend left] node {$(\ell_*,\ell_*)$} (r);
  \end{fsa}\]
Quotienting the Julia set by the equivalence relation generated by this relation amounts to shrinking each Fatou component to a point; this operation produces an everywhere-expanding map. (In this case, the resulting space is a point; but, in the case of the ``airplane'' polnomial $f(z)\approx z^2-1.754878$, it yields a dendrite).

Considering the subrelation in which some prefix is read along the loop at $q$, followed by the edge from $q$ to $r$, defines the relation ``belonging to the boundary of a specific Fatou component'', determined by the given prefix.


\end{document}